\newtheorem{prp}{Proposition}
\newtheorem{lem}{Lemma}
\newtheorem{thm}{Theorem}
\newtheorem{cly}{Corollary}
\theoremstyle{definition}
\newtheorem{dfn}{Definition}
\newcommand{\Li}{\text{Li}}
\newcommand{\ty}{\text{ty}}
\newcommand{\ve}{\mathbf{e}}
\newcommand{\qHyp}{q\text{Hyp}}
\newcommand{\qGeom}{q\text{Geom}}
\newcommand{\RSK}{\text{RSK}}
\newcommand{\pint}{\mathbb N}
\newcommand{\real}{\mathbb R}
\newcommand{\prob}{\mathbb P}
\newcommand{\expe}{\mathbb E}
\newcommand{\ind}{\mathbb I}
\newcommand{\vmat}[5]{
  \begin{pmatrix}
    #1 & & #2 \\ #3 & \underset{#5}{\text{---}} & #4
    \end{pmatrix}
  }
\begin{document}
\title{A $q$-Robinson-Schensted-Knuth algorithm and a $q$-polymer}
\author{Yuchen Pei}
\newcommand{\Addresses}{{
  \bigskip
  \footnotesize

  Yuchen Pei, \textsc{Center of Mathematical Sciences and Applications, Harvard University}\par\nopagebreak
  \textit{E-mail address}: \texttt{me@ypei.me}

}}

\maketitle
\abstract{
  In \cite{matveev-petrov15} a $q$-deformed Robinson-Schensted-Knuth algorithm ($q$RSK) was introduced.
In this article we give reformulations of this algorithm in terms of the Noumi-Yamada description, growth diagrams and local moves. 
We show that the algorithm is symmetric, namely the output tableaux pairs are swapped in a sense of distribution when the input matrix is transposed. 
We also formulate a $q$-polymer model based on the $q$RSK, prove the corresponding Burke property, which we use to show a strong law of large numbers for the partition function given stationary boundary conditions and $q$-geometric weights.
We use the $q$-local moves to define a generalisation of the $q$RSK taking a Young diagram-shape of array as the input.
We write down the joint distribution of partition functions in the space-like direction of the $q$-polymer in $q$-geometric environment, formulate a $q$-version of the multilayer polynuclear growth model ($q$PNG) and write down the joint distribution of the $q$-polymer partition functions at a fixed time.
}

\section{Introduction and main results}\label{s:intro}
The RSK algorithm was introduced in \cite{knuth70} as a generalisation of the Robinson-Schensted (RS) algorithm introduced in \cite{robinson38,schensted61}.
It transforms a matrix to a pair of semi-standard Young tableaux.
For an introduction of the RS(K) algorithms and Young tableaux see e.g. \cite{fulton97,sagan00}.

The gRSK algorithm, as a geometric lifting of the RSK algorithm, that is replacing the max-plus algebra by the usual algebra in its definition, was introduced in \cite{kirillov01}.

There are several equivalent formulations of the (g)RSK algorithms.
The commonest definition of the RSK algorithm is based on inserting a row of the input matrix into a semi-standard Young tableau.
However for the needs of defining gRSK, the insertion was reformulated as a map transforming a tableau row and an input row into a new tableau row and the input row to insert into the next tableau row.
This was introduced in \cite{noumi-yamada04}, and henceforce we call it the Noumi-Yamada description.
It will be the first reformulation of the algorithms in this article, from which we derive all of our main results.

The symmetry properties state that the pair of output tableaux are swapped if the input matrix is transposed.
One way to prove this is to reformulate the RSK algorithms as growth diagrams.
The growth diagram was developed in \cite{fomin86,fomin95}, see also exposition in \cite[Section 5.2]{sagan00}.
It is a rectangular lattice graph whose vertices record the growth of the shape of the output tableaux, and can be generated recursively by the local growth rule.

Much of the attention the (g)RSK algorithms receive these days come from the relation to the directed last passage percolation (DLPP) and the directed polymer (DP).
The Greene's theorem \cite{greene74} (see for example \cite{sagan00} for a modern exposition) characterises the shape of the output tableaux as lengths of longest non-decreasing subsequences.
As an immmediate consequence, this can be viewed as the RSK algorithm transforming a matrix to a multilayer non-intersecting generalisation of the DLPP. Specifically the first row of the output tableaux corresponds to precisely the DLPP.
When randomness is introduced into the input matrix, this connection yields exact formulas for the distribution of DLPP in geometric and exponential environments \cite{johansson00}.
The geometric lifting of the DLPP is the partition function of the directed polymer (DP) where the solvable environment is that of the log-Gamma weights \cite{seppalainen12}.
And unsurprisingly the gRSK is related to the DP the same way as RSK is related to the DLPP.
This was used in \cite{corwin-oconnell-seppalainen-zygouras14} to obtain exact formulas for the distribution of the partition function of DP in a log-Gamma environment.

The DLPP and DP can be defined locally using a similar growth rule to the (g)RSK. And given the solvable environment there present reversibility results of this local growth rule of the partition function called the Burke property.
It is used to show the cube root variance fluctuations of the partition functions \cite{balazs-cator-seppalainen06,seppalainen12}.

Also in these solvable models, the distribution of the shape of the tableaux are related to certain special functions.
In the RSK setting it is the Schur measure \cite{oconnell03a}, related to the Schur functions, and in gRSK setting it is the Whittaker measure, related to the $\mathfrak{gl}_{\ell + 1}$-Whittaker functions \cite{corwin-oconnell-seppalainen-zygouras14}.
This kind of results can often be obtained using a combination of Doob's $h$-transform and the Markov function theorem \cite{rogers-pitman81}.

In \cite{oconnell-seppalainen-zygouras14} a reformulation of the gRSK called the local moves was used to give a more direct treatment than the Markov function theorem to show the connection between the gRSK and the Whittaker functions.

The local moves can be generalised to take an array of Young diagram shape.
This idea can be found in the proof of the Two Polytope Theorem in  \cite{pak01}.
\footnote{See also the historical remarks in Section 8 of \cite{pak01}. An exposition of this idea can be also found in \cite{hopkins14}.}

In \cite{nguyen-zygouras16} this idea was used to yield the joint laws of the partition functions of the log-Gamma polymer in the space-like direction. 
Specifically it was used to formulate a geometric version of the multilayer polynuclear growth model (PNG) introduced in \cite{johansson03}, from which the joint law of the polymer partition functions at a fixed time could be written down.

One direction for generalisation of the (g)RSK algorithms is to interpolate using $q$-deformation.
The Macdonald polynomials were introduced in \cite{macdonald88}. See \cite{macdonald98} for a detailed introduction.
They are symmetric polynomials of two parameters $q$ and $t$.
We only consider $t = 0$, in which case they are also the $q$-Whittaker functions with some prefactors, as they are eigenfunctions of the $q$-deformed quantum Toda Hamiltonian \cite{gerasimov-lebedev-oblezin10}.
On the one hand the $q$-Whittaker functions interpolate between the Schur functions ($q = 0$) and the Whittaker functions ($q \to 1$ with proper scalings \cite{gerasimov-lebedev-oblezin12}).
On the other hand the simiarlity among structures of the Macdonald polynomials, Schur polynomials and the Whittaker functions makes the Macdonald processes and measures \cite{forrester-rains02,borodin-corwin14} possible.
This motivates the search for $q$-deformed RS(K) algorithms.

The $q$RS algorithms were introduced in \cite{oconnell-pei13} (column insertion version) and in \cite[Dynamics 3, $h = (1, 1, \dots, 1)$]{borodin-petrov13} (row insertion version). 
In \cite{matveev-petrov15} several $q$-deformed RSK ($q$RSK) algorithms were introduced. 
In all these $q$-deformations the algorithms transform inputs into random pairs of tableaux, rather than just one pair of tableaux.
These $q$-algorithms all have the desired property of transforming the input into various $q$-Whittaker processes.

In this article we work on the $q$RSK row insertion algorithm introduced in \cite[Section 6.1 and 6.2]{matveev-petrov15}.
It was shown in that paper that the $q$RSK algorithm transforms a matrix with $q$-geometric weights into the $q$-Whittaker process, and the push-forward measure of the shape of the output tableaux is the $q$-Whittaker measure.

We give the Noumi-Yamada description of this algorithm, from which we obtain a branching growth diagram construction similar to that in \cite{pei14}, and show that the algorithm is symmetric:
\begin{thm}\label{t:qsym}
  Let $\phi_A(P, Q) = \prob(q\RSK(A) = (P, Q))$ be the probability of obtaining the tableau pair $(P, Q)$ after performing $q$RSK on matrix $A$, then
  \begin{align*}
    \phi_A(P, Q) = \phi_{A'}(Q, P)
  \end{align*}
  where $A'$ is the transpose of $A$.
\end{thm}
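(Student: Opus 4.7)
The plan is to follow the growth-diagram route promised in the preceding paragraph. First, I would use the Noumi-Yamada description of $q$RSK to reformulate the algorithm as a sequential update on a two-dimensional lattice: place the matrix entry $a_{ij}$ in the unit cell at position $(i,j)$ of the $m\times n$ grid, and place at each vertex $(i,j)$ the (branching) shape of the tableau pair obtained by applying $q$RSK to the submatrix $A_{\le i,\le j}$. The Noumi-Yamada map then translates into a \emph{local stochastic growth rule}: given the three shapes at the south-west, north-west, and south-east corners of a unit cell, together with the matrix entry in that cell, the rule samples the north-east shape from a prescribed conditional distribution (which, by \cite{matveev-petrov15}, is built out of $q$Hyp/$q$Geom weights).

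The second step is to observe that transposing the input matrix $A \mapsto A'$ amounts to reflecting the entire growth diagram across its main diagonal. Under this reflection, the first row of each vertex's shape—which records the growth of $P$ as we scan along the rows—becomes the first column, i.e.\ the data recording $Q$, and vice versa. Consequently, if one can show that the local stochastic rule is \emph{invariant under the diagonal reflection of a unit cell}, then iterating this local symmetry across the whole diagram immediately identifies the joint law of the output pair $(P,Q)$ from $A$ with the joint law of $(Q,P)$ from $A'$, which is exactly the statement $\phi_A(P,Q) = \phi_{A'}(Q,P)$.

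The main obstacle is therefore verifying this local symmetry, which is purely an assertion about the conditional distribution specifying one corner of a unit cell given the other three and the cell's entry. Unlike classical RSK, where the corresponding local rule is a deterministic bijection and the symmetry is immediate from inspection, here one must check an identity of probability weights: for every triple of shapes and every value $a$ of the matrix entry, the $q$-hypergeometric probability of producing a given fourth shape under the Noumi-Yamada update should be symmetric in the two ``insertion directions.'' I expect this identity to follow from the symmetry, under swapping of two arguments, of the product of $q$Hyp/$q$Geom weights appearing in the Noumi-Yamada column update, possibly after a short $q$-binomial manipulation. Once this local lemma is in place, the theorem follows by induction on the diagram, with no additional input beyond the growth-diagram bookkeeping.
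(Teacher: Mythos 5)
Your proposal follows essentially the same route as the paper: reformulate $q$RSK via the Noumi--Yamada description as a branching local growth rule on the lattice, reduce the theorem to the distributional symmetry of that local rule in its two ``middle'' shape arguments, and verify this by checking that the $q$-hypergeometric weight $\qHyp(\mu^1_k - \lambda_k,\, \lambda_{k-1} - \mu^1_k,\, \mu^2_k - \lambda_k)$ has a pmf manifestly symmetric in $\mu^1$ and $\mu^2$, before propagating the symmetry across the diagram by induction (the paper does this via flipping L-shaped corners of down-right paths). The local identity you anticipate does hold exactly as you expect, so the plan is sound and matches the paper's proof.
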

We also formulate a $q$-polymer model which corresponds to the first row of the output tableaux of the $q$RSK. 
It interpolates between the DLPP ($q = 0$) and the DP ($q \to 1$ with proper scaling).
The Burke's property also carries over to the $q$-setting naturally, with which one immediately obtains some asymptotic results for the $q$-polymer with stationary boundary conditions. 
See Section \ref{s:qpolymer} for more details.
Also see Section \ref{s:qdef} for definition of $(x; q)_\infty$ that appears in the theorem.

\begin{thm}\label{t:lln}
  Let $Z$ be the partition function of the $q$-polymer.
  With stationary boundary conditions defined in Section \ref{s:qburke},
  \begin{align}
    \expe Z(\ell, j) = \ell \gamma(\alpha) + j \gamma(\beta) \label{eq:qpolyexp}
  \end{align}
  where 
  \begin{align*}
    \gamma(x) = x (\log E_q)'(x)
  \end{align*}
  where $E_q (x) = (x; q)_\infty^{-1}$.
  Moreover, almost surely
  \begin{align}\label{eq:lln}
    \lim_{N \to \infty} {Z(\lfloor N x \rfloor, \lfloor N y \rfloor) \over N} = x \gamma(\alpha) + y \gamma(\beta).
  \end{align}
\end{thm}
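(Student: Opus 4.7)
The plan is to reduce the two-dimensional statement to the one-dimensional strong law of large numbers via the $q$-Burke property established in Section~\ref{s:qburke}.

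First, I would set up the boundary increments along the L-shaped path from $(0,0)$ to $(\ell,0)$ to $(\ell,j)$. Writing
\begin{align*}
  U_i := Z(i,0) - Z(i-1,0), \qquad V_k^{(\ell)} := Z(\ell, k) - Z(\ell, k-1),
\end{align*}
the Burke property in the stationary regime guarantees that the $U_i$'s are i.i.d.\ with a marginal law depending only on $\alpha$, that for each fixed column $\ell$ the $V_k^{(\ell)}$'s are i.i.d.\ in $k$ with a marginal depending only on $\beta$, and that the two families are mutually independent. A direct computation from the $q$-geometric density and the normaliser $E_q(x) = (x;q)_\infty^{-1}$ identifies the means: since $\gamma(x) = x(\log E_q)'(x)$ is precisely the logarithmic derivative of the normaliser of the $q$-geometric distribution with parameter $x$, differentiating the identity $\sum_n x^n / (q;q)_n = E_q(x)$ gives $\expe U_1 = \gamma(\alpha)$ and $\expe V_1^{(0)} = \gamma(\beta)$.

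Second, telescoping along the L-shaped path yields
\begin{align*}
  Z(\ell,j) - Z(0,0) = \sum_{i=1}^\ell U_i + \sum_{k=1}^j V_k^{(\ell)}.
\end{align*}
Taking expectations and using $Z(0,0) = 0$ (built into the stationary initialisation) gives~\eqref{eq:qpolyexp} immediately. For~\eqref{eq:lln}, I would set $\ell = \lfloor N x \rfloor$ and $j = \lfloor N y \rfloor$, divide by $N$, and apply the classical SLLN to each of the two i.i.d.\ sums: the horizontal sum contributes $x\gamma(\alpha)$ almost surely, and the vertical sum, an i.i.d.\ sum of $\lfloor N y \rfloor$ terms with mean $\gamma(\beta)$, contributes $y\gamma(\beta)$ almost surely.

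The main obstacle is establishing the Burke property in the strong form actually needed here, namely that after growing the rectangle out to column $\ell$, the vertical increments $V_k^{(\ell)}$ at that far column are again i.i.d.\ in $k$ with the same $\beta$-marginal as on the original axis. This stationarity-preservation under growth is what lets the two-dimensional asymptotics be read off a single L-shaped path and reduces the LLN to a scalar SLLN; it is the $q$-analogue of the key step in Sepp\"al\"ainen's log-Gamma treatment~\cite{seppalainen12} and is precisely the content of the $q$-Burke property to be proved in Section~\ref{s:qburke}. Once it is in place, the finite-mean condition $\expe|U_1|, \expe|V_1^{(0)}| < \infty$ follows from the explicit $q$-geometric marginals and the fact that $\gamma$ is finite on the admissible range of parameters, which legitimises both the expectation formula and the SLLN step.
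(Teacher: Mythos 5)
Your proposal is correct and follows essentially the same route as the paper: decompose $Z(\ell,j)$ along the L-shaped path $(0,0)\to(\ell,0)\to(\ell,j)$, note that the horizontal increments are the i.i.d.\ $\qGeom(\alpha)$ boundary weights themselves, use the $q$-Burke property (Proposition \ref{t:qburke}) recursively to conclude the vertical increments at column $\ell$ are i.i.d.\ $\qGeom(\beta)$, identify the means via \eqref{eq:qgeommoment}, and apply the strong law of large numbers. The only cosmetic difference is that you attribute the i.i.d.\ structure of the horizontal increments to the Burke property, whereas it holds directly from the stationary boundary condition; the Burke property is only needed for the vertical increments.
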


Finally we formulate a $q$-local move that agrees with the $q$RSK when taking a matrix input.
Like in \cite{hopkins14,nguyen-zygouras16}, we use the $q$-local moves to generalise the $q$RSK to take arrays on a Young diagram as the input, propose the corresponding $q$PNG model, and write down the joint distribution of the $q$-polymer partition functions in the space-like direction.

Like in \cite{oconnell-seppalainen-zygouras14,nguyen-zygouras16}, the basic operation of the $q$-local moves, called $\rho_{n,k}$, works on diagonal strips $(i, j)_{i - j = n - k}$ of the input.

In those two papers, when the gRSK is defined as a composition of the $\rho_{n, k}$, they are defined in two different ways, row-by-row or antidiagonal-by-antidiagonal.

In \cite{hopkins14}, $\rho_{n, k}$ (or more precisely the map $b_{n, k}$ in \cite[(3.5)]{oconnell-seppalainen-zygouras14}, see also \eqref{eq:t1}\eqref{eq:t2}) were referred to as ``toggles''.
It was shown there the map called $\mathcal{RSK}$ can be of any composition of the toggles whenever they agree with a growth sequence of the underlying Young diagram of the input array.

In this article, we generalise this to the $q$-setting. 
By identifying the input pattern as an array on a Young diagram $\Lambda$, we show that the $q$RSK map $T_\Lambda$ can be of any composition of the $\rho$'s whenever they agree with a growth sequence of $\Lambda$.
For details of definitions of $\rho_{n, k}$ and $T_\Lambda$ see Section \ref{s:qlocalmoves}.

We fit the input $(w_{i, j})_{(i, j) \in \Lambda}$ into an infinite array $A = (w_{i, j})_{i, j \ge 1} \in \pint^{\pint_+ \times \pint_+}$ and define $T_\Lambda$ such that when acting on an infinite array like $A$ it only alters the topleft $\Lambda$ part of the array.

Let\footnote{See Section \ref{s:notations} for explanation of notations like $a : b$}
\begin{align*}
  r_1 &= t_{\Lambda'_1, 1} \\
  r_j &= \sum_{k = 1 : (j \wedge \Lambda'_j)} t_{\Lambda'_j - k + 1, j - k + 1} - \sum_{k = 1 : ((j - 1) \wedge \Lambda'_j)} t_{\Lambda'_j - k + 1, j - k}, \qquad j = 2 : \Lambda_1 \\
  \hat r_1 &= t_{1, \Lambda_1} \\
  \hat r_i &= \sum_{k = 1 : (i \wedge \Lambda_i)} t_{i - k + 1, \Lambda_i - k + 1} - \sum_{k = 1 : ((i - 1) \wedge \Lambda_i)} t_{i - k, \Lambda_i - k + 1} \qquad i = 2 : \Lambda'_1
\end{align*}
Given a $q$-geometrically distributed input array, we can write down the explicit formula of the push-forward measure of $T_\Lambda$.

In this article we let $(\hat\alpha_i)$ and $(\alpha_j)$ be parameters such that $\hat \alpha_i \alpha_j \in (0, 1)$ for all $i, j$.
Also note for integer $n$, denote $(n)_q$ to be the $q$-Pochhammer $(q; q)_n$ (see Section \ref{s:qdef}).
\begin{thm}\label{t:lmpush}
  Given that the input pattern $(w_{i, j})_{(i, j)}$ have independent $q$-geometric weights
\begin{align*}
  w_{i, j} \sim \qGeom(\hat\alpha_i \alpha_j), \qquad \forall i, j
\end{align*}
the distribution of $T_\Lambda A(\Lambda)$ is
\begin{align*}
  \prob(T_\Lambda A(\Lambda) &= (t_{i, j})_{(i, j) \in \Lambda}) \\
  &= \mu_{q, \Lambda}(t) := (t_{1 1})_q^{-1} {\prod_{(i, j) \in \Lambda: (i - 1, j - 1) \in \Lambda} (t_{i j} - t_{i - 1, j - 1})_q \over \prod_{(i, j) \in \Lambda: (i, j - 1) \in \Lambda} (t_{i j} - t_{i, j - 1})_q \prod_{(i, j) \in \Lambda: (i - 1, j) \in \Lambda} (t_{i j} - t_{i - 1, j})_q} \\
  &\;\;\;\;\;\;\;\;\;\;\;\;\times \alpha^r \hat\alpha^{\hat r} \prod_{(i, j) \in  \Lambda}  (\hat \alpha_i \alpha_j; q)_\infty \ind_{t \in D_\Lambda},
\end{align*}
where
\begin{align*}
  D_\Lambda := \{t \in \pint^\Lambda: t_{i - 1, j} \le t_{i, j} \forall \{(i, j), (i - 1, j)\} \subset \Lambda, t_{i, j - 1} \le t_{i, j} \forall \{(i, j), (i, j - 1)\} \subset \Lambda\}.
\end{align*}
\end{thm}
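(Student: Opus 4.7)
The plan is to proceed by induction on $|\Lambda|$, exploiting the growth-sequence flexibility established in Section~\ref{s:qlocalmoves}: $T_\Lambda$ can be realised as any composition $\rho_{n_N,k_N} \circ \cdots \circ \rho_{n_1,k_1}$ in which $(n_m,k_m)$ adds cells one at a time along a growth sequence $\emptyset = \Lambda^{(0)} \subset \cdots \subset \Lambda^{(N)} = \Lambda$. At step $m$ only the cell $(n_m,k_m)$ is new and its value depends only on $w_{n_m,k_m}$ together with already-computed neighbours, which decouples the proof into a cell-by-cell verification. The base case $\Lambda = \{(1,1)\}$ is immediate: $T_\Lambda$ is the identity, $t_{11} = w_{11} \sim \qGeom(\hat\alpha_1\alpha_1)$, and all $q$-Pochhammer factors in $\mu_{q,\Lambda}$ indexed by non-existent neighbours of $(1,1)$ collapse to $1$, so the formula reduces to the $q$-geometric PMF $(\hat\alpha_1\alpha_1;q)_\infty (\hat\alpha_1\alpha_1)^{t_{11}}/(t_{11})_q$.

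For the inductive step, write $(n,k) = (n_m,k_m)$. Since $w_{n,k}$ is independent of $T_{\Lambda^{(m-1)}} A(\Lambda^{(m-1)})$ and $\rho_{n,k}$ only alters coordinate $(n,k)$, the joint law of $t = T_{\Lambda^{(m)}} A(\Lambda^{(m)})$ factors as
\begin{align*}
\prob(t = \tau) = \mu_{q,\Lambda^{(m-1)}}(\tau|_{\Lambda^{(m-1)}}) \cdot \kappa_{n,k}(\tau_{n,k} \mid \tau_{n-1,k}, \tau_{n,k-1}, \tau_{n-1,k-1}),
\end{align*}
where $\kappa_{n,k}$ is the conditional pushforward of $\qGeom(\hat\alpha_n\alpha_k)$ under $\rho_{n,k}$ and only those of the three neighbours that lie in $\Lambda^{(m-1)}$ actually appear. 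What must be checked is that this conditional kernel equals the ratio $\mu_{q,\Lambda^{(m)}}(\tau)/\mu_{q,\Lambda^{(m-1)}}(\tau|_{\Lambda^{(m-1)}})$. By inspection of the product formula, this ratio decomposes into three pieces: the new $q$-Pochhammer factor at $(n,k)$; the normalisation $(\hat\alpha_n\alpha_k;q)_\infty$ coming directly from the $q$-geometric weight of $w_{n,k}$; and an exponent increment in $\alpha^r \hat\alpha^{\hat r}$. The telescoping definitions of $r_j$ and $\hat r_i$ are engineered precisely so that this increment matches the contribution $(\hat\alpha_n\alpha_k)^{w_{n,k}}$ after the change of variables $w_{n,k} \leftrightarrow t_{n,k}$ imposed by $\rho_{n,k}$.

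The principal difficulty lies in carrying out this algebraic identity for $\kappa_{n,k}$ explicitly. This requires extracting the explicit $q$-update rule for $\rho_{n,k}$ from the Noumi--Yamada description of Section~\ref{s:qlocalmoves}, followed by a case analysis on the position of $(n,k)$ (first row, first column, or strict interior), since the set of existing neighbours and the corresponding summands in $r_k$ and $\hat r_n$ differ in each case. In each case the identity reduces to a $q$-Pochhammer evaluation of Chu--Vandermonde type, which should be viewed as the $q$-analogue of the Gamma-function manipulation underlying the deterministic log-Gamma computation of \cite{nguyen-zygouras16}; the correct $q\to 1$ scaling limit would be a sanity check for the bookkeeping.
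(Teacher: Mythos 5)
There is a genuine gap, and it sits at the heart of your inductive step. You assert that ``$\rho_{n,k}$ only alters coordinate $(n,k)$'' and therefore that the pushforward law factors as $\mu_{q,\Lambda^{(m-1)}}$ times a conditional kernel $\kappa_{n,k}$ for the single new coordinate, given its three neighbours. This is false: by its definition, $\rho_{n,k}$ is the composition $l'_{\cdot}\circ\cdots\circ l'_{n-1,k}\circ l_{\cdot}\circ\cdots\circ l_{n-1,k-1}\circ l_{n,k}$, which overwrites \emph{every} entry on the diagonal $i-j=n-k$ inside $\Lambda$, not just the outer-corner cell. (The paper states this explicitly: $\rho_{n,k}$ ``works on diagonal strips $(i,j)_{i-j=n-k}$'', and Theorem \ref{t:qlocalmoves} shows the whole diagonal $t_{n-\ell+1,k-\ell+1}$, $\ell=1:n\wedge k$, is re-computed at this step.) Consequently the values of $T_{\Lambda^{(m-1)}}A$ along that diagonal are not retained in $T_{\Lambda^{(m)}}A$, and your claimed factorisation of $\prob(t=\tau)$ does not hold. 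The correct inductive step must instead sum over the pre-image configuration $t'$ of the entire diagonal, i.e.\ one must verify
\begin{align*}
  \sum_{t'} (t'_{n,k})_q^{-1}\,\frac{M_\Theta(t')}{M_\Lambda(t)}\,\prob(\rho_{n,k}t'=t) = 1,
\end{align*}
where the sum runs over the $n\wedge k$ old diagonal entries. This is not ``a $q$-Pochhammer evaluation of Chu--Vandermonde type'' at one cell; it is a nested $(k-1)$-fold sum which the paper resolves by peeling off one diagonal cell at a time, applying the $q$-hypergeometric normalisation identity \eqref{eq:qhyp} once per cell, with each application producing exactly the factor $h(a_{i-1},b_{i-1},c_{i-1},a_{i-2})$ needed for the next. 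Your ``cell-by-cell decoupling'' skips precisely this, which is the principal content of the proof.

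Two smaller remarks. First, your treatment of the exponents $\alpha^r\hat\alpha^{\hat r}$ as an incremental bookkeeping at each added cell is workable in principle, but the paper handles it more cleanly and globally: Lemma \ref{l:wt} (weight preservation) combined with Theorem \ref{t:qlocalmoves} identifies the row and column sums of $(w_{i,j})$ with the telescoping sums defining $r$ and $\hat r$, so the powers of $\alpha_j$ and $\hat\alpha_i$ match on both sides once and for all and can be cancelled before the induction even starts. Second, the case analysis you anticipate (first row, first column, interior) is indeed present in the paper, but it concerns the boundary forms of the local moves \eqref{eq:lmb1}--\eqref{eq:lmb3} at the \emph{far end} of the diagonal (where $\ell=n\wedge k$), not the position of the added corner $(n,k)$ itself.
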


We define an outer corner of a Young diagram to be any cell without neighbours to the right of below itself.
More precisely, $(n, m)$ is an outer corner of $\lambda$ if $\lambda_n = m$ and $\lambda_{n + 1} < m$.

Given a Young diagram $\Lambda$ with outer corners $(n_1, m_1), (n_2, m_2), \dots, (n_p, m_p)$, summing over the non-outer-corner points we can show the multipoint distribution of the $q$-polymer:
\begin{cly}\label{c:qpdist}
  For $m_1 \le m_2 \le \dots \le m_p$ and $n_1 \le n_2 \le \dots \le n_p$.
  Let $\Lambda$ be the Young diagram with outer corners $((n_i, m_{p - i + 1}))_{i = 1 : p}$.
  The partition functions $(Z(n_1, m_p), Z(n_2, m_{p - 1}), \dots, Z(n_p, m_1))$ of the $q$-polymer in a $(\hat\alpha, \alpha)$-$q$-geometric environment has the following joint distribution:
  \begin{align*}
    \prob(Z(n_1, m_p) = x_1, Z(n_2, m_{p - 1}) = x_2, \dots, Z(n_p, m_1) = x_p) = \sum_{t \in D_\Lambda, t_{n_i, m_{p - i + 1}} = x_i \forall i = 1 : p} \mu_{q, \Lambda} (t)
  \end{align*}
\end{cly}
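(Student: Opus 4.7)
The plan is to apply Theorem \ref{t:lmpush} to obtain the joint law of the full output array $T_\Lambda A(\Lambda) = (t_{i,j})_{(i,j) \in \Lambda}$ and then marginalize over the non-outer-corner entries. The only non-routine step is the identification of each outer-corner entry $t_{n_i, m_{p-i+1}}$ with the corresponding $q$-polymer partition function $Z(n_i, m_{p-i+1})$.

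For this identification I would exploit the order-flexibility of the $q$-local moves established in Section \ref{s:qlocalmoves}: $T_\Lambda$ may be realised as a composition of toggles $\rho_{n,k}$ along any growth sequence of $\Lambda$. Fix an index $i$. Since $n_1 \le \ldots \le n_p$ and $m_1 \le \ldots \le m_p$, one has $[1, n_i] \times [1, m_{p-i+1}] \subseteq \Lambda$, so I can choose a growth sequence of $\Lambda$ that first completes this sub-rectangle and then extends to the rest of $\Lambda$. At the moment the sub-rectangle is finished, $T_\Lambda$ restricted to it agrees with ordinary $q$RSK on the corresponding sub-matrix of the input, so by the correspondence between the first row of the $q$RSK output and the $q$-polymer partition function recalled in Section \ref{s:qpolymer}, the value at $(n_i, m_{p-i+1})$ at that moment equals $Z(n_i, m_{p-i+1})$. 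Because every cell of $\Lambda$ is toggled exactly once along a growth sequence and a toggle only rewrites the value at its own cell, the remaining toggles, which all occur at cells outside the sub-rectangle, leave $t_{n_i, m_{p-i+1}}$ untouched. Doing this for each $i$ and invoking the order-independence of $T_\Lambda$ to conclude that the final output array does not depend on the chosen growth sequence yields $t_{n_i, m_{p-i+1}} = Z(n_i, m_{p-i+1})$ simultaneously for all $i$.

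With this identification, the claim is immediate: the joint distribution of $(Z(n_1, m_p), \ldots, Z(n_p, m_1))$ is obtained from the pushforward $\mu_{q,\Lambda}$ of Theorem \ref{t:lmpush} by summing over all $t \in D_\Lambda$ with the outer-corner values fixed to $t_{n_i, m_{p-i+1}} = x_i$, which is exactly the right-hand side of the stated formula.

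The step I anticipate to be the most delicate is the order-flexibility argument in the $q$-setting, namely that $T_\Lambda$ is well-defined independently of the chosen growth sequence and that its restriction to a rectangular sub-domain reproduces the standard $q$RSK on that sub-matrix. Assuming this has been set up in Section \ref{s:qlocalmoves} as the $q$-analogue of the Hopkins/Oconnell--Seppalainen--Zygouras framework, the remainder of the proof is pure bookkeeping on the formula of Theorem \ref{t:lmpush}.
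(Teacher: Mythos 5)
Your proposal is correct and follows essentially the same route as the paper, which obtains the corollary by combining Theorem \ref{t:qlocalmoves} with Theorem \ref{t:lmpush}: an outer corner $(n_i, m_{p-i+1})$ lies on the boundary $S(\Lambda)$, so its $\Lambda$-coordinate is $(n_i, m_{p-i+1}, 1)$ and Theorem \ref{t:qlocalmoves} gives $t_{n_i, m_{p-i+1}} = \lambda^{m_{p-i+1}}_1(n_i) = Z(n_i, m_{p-i+1})$ directly, which is exactly the identification you re-derive via the growth-sequence argument before marginalizing $\mu_{q,\Lambda}$. One small caution: a move $\rho_{n',k'}$ rewrites entries along the entire diagonal $i - j = n' - k'$, not just at its own cell, so the correct reason the later moves leave the corner value untouched is that an outer corner is the last cell of $\Lambda$ on its diagonal, hence no subsequent $\rho$ acts on that diagonal.
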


Furthermore, if we specify $m_i = n_i = i$ for $i = 1 : p$, that is, $\Lambda$ is a staircase Young diagram, then we may define a $q$PNG multilayer noncolliding process, as in \cite{johansson03,nguyen-zygouras16}.
By recognising $p$ as the time, we can write down the joint distribution of the partition functions of the $q$-polymer at time $p$.

\begin{cly}\label{c:qpng}
  The partition functions of the $q$-polymer at time $p$ with the $(\hat\alpha, \alpha)$-$q$-geometric environment has the following joint distribution
  \begin{align*}
    \prob(Z(1, p) &= x_1, Z(2, p - 1) = x_2, \dots, Z(p, 1) = x_p) \\
    &= \prod_{i + j \le p + 1} (\hat \alpha_i \alpha_j; q)_\infty \sum_{t \in D_\Lambda, t_{i, p - i + 1} = x_i, \forall i = 1 : p} \left((t_{11})_q^{-1} {\prod_{i + j \le p - 1} (t_{i + 1, j + 1} - t_{i, j})_q \over \prod_{i + j \le p} \left( (t_{i + 1, j} - t_{i, j})_q (t_{i, j + 1} - t_{i, j})_q \right)}\right. \\
    &\left.\qquad\qquad\qquad\qquad\qquad\qquad\qquad\qquad\qquad\qquad\qquad\qquad\times{\prod_{i + j = p + 1} (\hat \alpha_i \alpha_j)^{t_{i, j}} \over \prod_{i, j > 1, i + j = p + 2} (\hat \alpha_i \alpha_j)^{t_{i - 1, j - 1}}}\right)
  \end{align*}
\end{cly}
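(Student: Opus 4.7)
The plan is to obtain the corollary from Corollary \ref{c:qpdist} by specializing to the staircase Young diagram $\Lambda = \{(i, j) : i + j \le p + 1\}$, which has outer corners $(i, p + 1 - i)$ for $i = 1, \ldots, p$ and satisfies $\Lambda_i = \Lambda'_i = p + 1 - i$. Substituting the explicit formula for $\mu_{q, \Lambda}$ from Theorem \ref{t:lmpush} into Corollary \ref{c:qpdist} and simplifying three groups of factors should yield the claimed expression.

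First, the normalisation $\prod_{(i, j) \in \Lambda}(\hat\alpha_i \alpha_j; q)_\infty$ is constant in $t$ and factors out of the sum as $\prod_{i + j \le p + 1}(\hat\alpha_i \alpha_j; q)_\infty$. Second, the cell-wise $q$-Pochhammer products reindex cleanly against the staircase: the condition ``$(i - 1, j - 1) \in \Lambda$ with $(i, j) \in \Lambda$'' reduces to $i, j \ge 2$ and $i + j \le p + 1$, which under the shift $(i, j) \mapsto (i + 1, j + 1)$ becomes $i + j \le p - 1$; the conditions involving $(i, j - 1)$ and $(i - 1, j)$ similarly produce products over $\{i + j \le p\}$. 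The $(t_{1, 1})_q^{-1}$ factor is inherited unchanged.

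The crux of the argument is showing that the boundary factor $\alpha^r \hat\alpha^{\hat r}$ collapses on the staircase to the claimed ratio of anti-diagonal products. My approach is to compute, for each cell $(k, l) \in \Lambda$, the total coefficient of $t_{k, l}$ contributed by the $r_j$'s and $\hat r_i$'s. Each $r_j$ is a signed difference of two up-left diagonal sums originating at columns $j$ and $j - 1$ of the boundary; for successive $j$ these diagonals overlap heavily, and the telescoping should cancel the contributions from interior cells while leaving residues only on the outermost two anti-diagonals, matching the numerator at $i + j = p + 1$ and the denominator at $i + j = p$ (reindexed as $i + j = p + 2$) of the target formula. The main obstacle is the ``kink'' in the summation ranges induced by the minima $j \wedge \Lambda'_j$ and $(j - 1) \wedge \Lambda'_j$: one must split into the regimes $j \le (p + 1)/2$ and $j > (p + 1)/2$, and symmetrically for $\hat r_i$ according to $i \le (p + 1)/2$, and verify that the contributions of the two halves glue together into the symmetric anti-diagonal formula. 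Once this telescoping identity is verified, substituting back into Corollary \ref{c:qpdist} yields the statement.
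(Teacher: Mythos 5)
Your overall strategy --- specialise Corollary \ref{c:qpdist} and Theorem \ref{t:lmpush} to the staircase $\Lambda=(p,p-1,\dots,1)$, pull out the normalisation $\prod(\hat\alpha_i\alpha_j;q)_\infty$, reindex the three $q$-Pochhammer products, and then simplify $\alpha^r\hat\alpha^{\hat r}$ --- is exactly the route the paper takes (its proof is essentially the one-line remark that one applies Theorems \ref{t:qlocalmoves} and \ref{t:lmpush} to the staircase diagram), and your treatment of the normalisation and of the Pochhammer reindexing is correct.

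The gap is in the crux step, the claimed telescoping of $\alpha^r\hat\alpha^{\hat r}$. On the staircase $\Lambda'_j=p+1-j$, so $r_j$ is the difference of the sums of $t$ over the two \emph{full} diagonals $\{i-j'=p+1-2j\}$ and $\{i-j'=p+2-2j\}$ of $\Lambda$. These two diagonals are disjoint, and for distinct $j$ the pairs of diagonals are pairwise disjoint (they shift by two as $j$ increases by one); moreover $r_j$ and $r_{j'}$ are exponents of the distinct variables $\alpha_j$ and $\alpha_{j'}$, so no cancellation between them can occur in $\prod_j\alpha_j^{r_j}$. Hence nothing telescopes: every cell of $\Lambda$, not just the two outer anti-diagonals, survives in $\alpha^r\hat\alpha^{\hat r}$. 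Concretely, for $p=3$ one has $r_2=t_{2,2}+t_{1,1}-t_{2,1}$ and $\hat r_2=t_{2,2}+t_{1,1}-t_{1,2}$, whereas the exponents of $\alpha_2$ and $\hat\alpha_2$ read off from the displayed ratio are $t_{2,2}-t_{2,1}$ and $t_{2,2}-t_{1,2}$; the two sides differ by $(\hat\alpha_2\alpha_2)^{t_{1,1}}$, and $t_{1,1}$ is an interior entry that is summed over in $D_\Lambda$, so the discrepancy cannot be absorbed. (This is forced by $r_j=\sum_i w_{i,j}$ being the full $j$-th column sum of the input, which cannot be a function of the two outer anti-diagonals of $t$ alone.) The correct simplification is a product over \emph{all} anti-diagonals: a cell $(i',j')$ with $i'+j'=p+1-2s$ contributes $t_{i',j'}$ to the exponent of $\hat\alpha_{i'+s}\alpha_{j'+s}$, and a cell with $i'+j'=p-2s$ contributes $-t_{i',j'}$ to the exponent of $\hat\alpha_{i'+s+1}\alpha_{j'+s+1}$; the displayed right-hand side keeps only the $s=0$ terms. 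So as written your derivation cannot close on the stated formula for $p\ge 3$, and you should carry the full $\alpha^r\hat\alpha^{\hat r}$ (or the all-diagonal product just described) through to the final expression rather than asserting the collapse to the outer two anti-diagonals.
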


If we restrict to $p = 1$, that is, $\Lambda$ is a rectangular Young diagram, we obtain the following (recall the definition of $r$ and $\hat r$ just before Theorem \ref{t:lmpush}):

\begin{cly}
  Given a $q$-geometric distributed matrix $(w_{i, j} \sim \qGeom(\hat\alpha_i \alpha_j))_{i = 1 : n, j = 1 : m}$, the push-forward measure of $q$RSK taking this matrix is
  \begin{align*}
    \mu_q (t) &= (t_{11})_q^{-1} {\prod_{i = 2 : n, j = 2 : m} (t_{i, j} - t_{i - 1, j - 1})_q \over \prod_{i = 1 : n, j = 2 : m}(t_{i, j} - t_{i, j - 1})_q \prod_{i = 2 : n, j = 1 : m} (t_{i, j} - t_{i - 1, j})_q} \\
    &\times \alpha^r \hat\alpha^{\hat r} \prod_{i = 1 : n, j = 1 : m} (\hat\alpha_i \alpha_j; q)_\infty \ind_{t \in D_\Lambda}
  \end{align*}
\end{cly}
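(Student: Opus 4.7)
The plan is to obtain this corollary as a direct specialisation of Theorem \ref{t:lmpush} to the case when $\Lambda$ is the rectangular Young diagram with $n$ rows of length $m$. The first step is to observe that for such rectangular $\Lambda$, the generalised map $T_\Lambda$ defined in Section \ref{s:qlocalmoves} coincides with the standard $q$RSK acting on an $n \times m$ matrix. Indeed, any valid growth sequence of the rectangle produces the same composition of local moves $\rho_{n,k}$, and one can in particular choose a row-by-row growth, which by construction of $T_\Lambda$ reproduces the $q$RSK row-insertion algorithm originally formulated in \cite{matveev-petrov15}. This is precisely the property announced in the sentence ``we formulate a $q$-local move that agrees with the $q$RSK when taking a matrix input'' preceding Theorem \ref{t:lmpush}.

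The second step is to specialise the neighbour-membership index conditions appearing in the formula of Theorem \ref{t:lmpush}. For the $n \times m$ rectangle, the condition that $(i,j)$ and $(i-1,j-1)$ both lie in $\Lambda$ is equivalent to $2 \le i \le n$ and $2 \le j \le m$; the horizontal-neighbour condition reduces to $1 \le i \le n$ and $2 \le j \le m$, and the vertical-neighbour condition to $2 \le i \le n$ and $1 \le j \le m$. Similarly the product $\prod_{(i,j) \in \Lambda}(\hat\alpha_i\alpha_j; q)_\infty$ becomes the double product over $i = 1 : n$ and $j = 1 : m$.

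Finally, the factors $\alpha^r \hat\alpha^{\hat r}$ and the indicator $\ind_{t \in D_\Lambda}$ transfer unchanged, with $r$ and $\hat r$ given by the marginal formulas displayed just before Theorem \ref{t:lmpush}, now evaluated at $\Lambda_i = m$ and $\Lambda'_j = n$. Substituting these simplified index ranges into the master formula of Theorem \ref{t:lmpush} yields exactly the claimed expression for $\mu_q(t)$. Since the argument is pure substitution, there is no substantive obstacle beyond verifying that $T_\Lambda$ coincides with $q\RSK$ on rectangles, which is immediate from the construction in Section \ref{s:qlocalmoves}.
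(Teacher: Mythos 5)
Your proposal is correct and follows essentially the same route as the paper: the corollary is obtained by specialising Theorem \ref{t:lmpush} to the rectangular diagram $\Lambda = [n]\times[m]$ and invoking the identification of $T_\Lambda$ with $q$RSK on rectangles, which is the content of Theorem \ref{t:qlocalmoves}. The only caveat is that this identification is a proved theorem (by induction on growth sequences) rather than something ``immediate from the construction,'' but since you are entitled to cite it, the argument stands.
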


By summing over all $t_{i, j}$ with fixed diagonals $(t_{n, m}, t_{n - 1, m - 1}, \dots, t_{(n - m)^+ + 1, (m - n)^+ + 1}) = (\lambda_1, \dots, \lambda_{m \wedge n})$, we recover a result in \cite{matveev-petrov15}:

\begin{cly}\label{c:qwmeasure}
  Given a $q$-geometric distributed matrix $(w_{i, j} \sim \qGeom(\hat\alpha_i \alpha_j))_{i = 1 : n, j = 1 : m}$, the shape of the output tableaux after applying $q$RSK on $(w_{i, j})$ is distributed according to the $q$-Whittaker measure:
  \begin{align*}
    \prob((t_{n, m}, t_{n - 1, m - 1}, \dots, t_{(n - m)^+ + 1, (m - n)^+ + 1}) = (&\lambda_1, \dots, \lambda_{m \wedge n})) \\
    &= P_\lambda(\alpha) Q_\lambda(\hat\alpha) \prod_{i = 1 : n, j = 1 : m} (\hat\alpha_i \alpha_j; q)_\infty,
  \end{align*}
where $P_\lambda$ and $Q_\lambda$ are $(t = 0)$-Macdonald polynomials.
\end{cly}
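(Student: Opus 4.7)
\textbf{Proof plan for Corollary \ref{c:qwmeasure}.} The plan is to start from the explicit pushforward formula $\mu_q(t)$ of the previous corollary and marginalise out the ``off-diagonal'' variables, keeping the anti-diagonal $(t_{n,m}, t_{n-1,m-1}, \ldots, t_{(n-m)^+ + 1, (m-n)^+ +1}) = (\lambda_1, \ldots, \lambda_{m\wedge n})$ fixed. Since by Theorem \ref{t:lmpush} we already have a completely explicit product formula for $\mu_q$, the task is purely combinatorial: evaluate
\[
  \sum_{t \in D_\Lambda : \text{diag}(t) = \lambda} \mu_q(t)
\]
and identify the result with $P_\lambda(\alpha)\, Q_\lambda(\hat\alpha) \prod_{i,j} (\hat\alpha_i \alpha_j; q)_\infty$.

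First I would show that an array $t \in D_\Lambda$ with fixed antidiagonal $\lambda$ decomposes canonically into two Gelfand--Tsetlin-type patterns: an ``upper'' half consisting of entries $t_{i,j}$ with $j - i \ge (m-n)^+$ (encoding a tableau $P$ of shape $\lambda$) and a ``lower'' half with $i - j \ge (n-m)^+$ (encoding $Q$), glued along the diagonal $\lambda$. The weighting $\alpha^r \hat\alpha^{\hat r}$ also splits: by inspection of the definitions of $r$ and $\hat r$ just before Theorem \ref{t:lmpush}, the exponent of $\alpha_j$ depends only on the column $j$ sums in the upper half, and similarly for $\hat\alpha_i$ and the lower half.

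Next I would split the ratio of $q$-Pochhammer factors in $\mu_q(t)$ along the antidiagonal. The numerator factors $(t_{i,j} - t_{i-1,j-1})_q$ for $(i,j)$ strictly above the antidiagonal pair with denominator factors $(t_{i,j} - t_{i,j-1})_q$ and $(t_{i,j} - t_{i-1,j})_q$ of the same side, yielding (modulo the boundary factor $(t_{11})_q^{-1}$ which belongs to one of the halves) the $\psi$-weights $\psi_{T}(q)$ that appear in the branching-rule formula for $(t=0)$-Macdonald polynomials:
\[
  P_\lambda(x_1, \ldots, x_n) = \sum_{T} \psi_T(q) \, x^{\mathrm{type}(T)}.
\]
The factor $\prod_{(i,j)\in\Lambda}(\hat\alpha_i\alpha_j;q)_\infty$ is $t$-independent and comes out of the sum unchanged, providing the Cauchy-type prefactor in the target formula.

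The main obstacle is bookkeeping: verifying that the explicit $q$-Pochhammer combinations in $\mu_q$, once split along the antidiagonal, match exactly the known combinatorial formulas for the $q$-Whittaker polynomials $P_\lambda$ and $Q_\lambda$ (as found, e.g., in \cite{macdonald98} at $t=0$). A cleaner alternative is to invoke the Matveev--Petrov result (cited in the paper before Theorem \ref{t:qsym}): they prove that the $q$RSK on $q$-geometric input produces the $q$-Whittaker process, whose shape-marginal is the $q$-Whittaker measure. Since our Theorem \ref{t:lmpush} provides the joint law of the whole pattern that refines the shape marginal, the corollary then follows by uniqueness of marginals and the known Cauchy identity $\sum_\lambda P_\lambda(\alpha) Q_\lambda(\hat\alpha) = \prod_{i,j}(\hat\alpha_i\alpha_j;q)_\infty^{-1}$ to fix the normalisation.
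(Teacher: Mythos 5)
Your main route -- marginalising the explicit pushforward measure $\mu_q$ of Theorem \ref{t:lmpush} over the off-antidiagonal entries, splitting the array into two Gelfand--Tsetlin patterns glued along the shape $\lambda$, and matching the $q$-Pochhammer ratios and the weights $\alpha^r\hat\alpha^{\hat r}$ with the branching-rule formula for the $(t=0)$-Macdonald polynomials -- is exactly the paper's approach (stated there as a proposition whose proof is likewise left as a bookkeeping exercise), so the proposal is correct and essentially identical. Only your fallback of invoking Matveev--Petrov directly should be avoided, since the corollary is explicitly framed as \emph{recovering} their result from Theorem \ref{t:lmpush} rather than importing it.
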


The rest of the article is organised as follows. 
In Section \ref{s:sym} we review some preliminaries on (g)RSK, $q$-deformations and the $q$-Whittaker functions. Then we give the Noumi-Yamada description and growth diagram formulations of the $q$RSK algorithm, with which we prove the symmetry property Theorem \ref{t:qsym}. 
In Section \ref{s:qpolymer} we formulate the $q$-polymer, define and discuss the Burke relations, prove the $q$-Burke property, with which we prove Theorem \ref{t:lln}
In Section \ref{s:qlocalmoves} we formulate the $q$-local moves, show their relation to the $q$RSK, prove Theorem \ref{t:lmpush}, propose the $q$PNG, and discuss a measure on the matrix and its classical limit to a similar measure in \cite{oconnell-seppalainen-zygouras14}.

\subsection{Notations}\label{s:notations}
We list some notations we use in this article.
\begin{itemize}
  \item $\pint$ is the set of the nonnegative integers, and $\pint_+$ the set of the positive integers.
    \item $\ind_A$ is the indicator function on the set $A$.
    \item $a : b$ is $\{a, a + 1, \dots, b\}$
    \item $[n]$ is $1 : n$
      \item $i = a : b$ means $i \in \{a, a + 1, \dots, b\}$
      \item $(\lambda_{1 : m})$ denotes $(\lambda_1, \lambda_2, \dots, \lambda_m)$.
      \item $w_{n, 1 : k}$ is $(w_{n, 1}, w_{n, 2}, \dots, w_{n, k})$
      \item $w_{1 : n, 1 : m}$ is a matrix $(w_{i, j})_{n \times m}$
        \item $:=$ means (re)evaluation or definition. For example $u := u + a$ means we obtain a new $u$ which has the value of the old $u$ adding $a$.
        \item For $a = (a_{1 : m})$, $b = (b_{1 : m})$, $a^b := \prod_{i = 1 : m} a_i^{b_i}$.
        \item For an array $A = (w_{i, j})_{(i, j) \in \pint_+^2}$ and a index set $I$, $A(I) := (w_{i, j})_{(i, j) \in I}$.
          \item $\ve_i$ is the vector with a $1$ at the $i$th entry and $0$ at all other entries: $\ve_i = (0, 0, \dots, 0, 1, 0, 0,\dots)$.
\end{itemize}

\noindent\textbf{Acknowledgement.} The author would like to thank Ziliang Che for fruitful discussions. 
The author would also like to acknowledge communication and discussions with Vu-Lan Nguyen, Jeremy Quastel, Neil O'Connell, Dan Romik, Konstantin Matveev, Timo Sepp\"al\"ainen, Alexei Borodin, Nikos Zygouras and Ivan Corwin.
Furthermore the author would like to thank an anonymous reviewer for reading the article and providing valuable feedbacks which results in much improvement of the article.
This work was supported by the Center of Mathematical Sciences and Applications at Harvard University.

\section{Noumi-Yamada description, growth diagrams and the symmetry property}\label{s:sym}
In this section we review the basics of the theory of Young tableaux and the Noumi-Yamada description of the (g)RSK. We also review some $q$-deformations and related probability distributions. Then we formulate the Noumi-Yamada description and growth diagram for the $q$RSK, and show how to use the latter to prove the symmetry property Theorem \ref{t:qsym}.
\subsection{A review of the RSK and gRSK algorithms}

A Young diagram $\lambda = (\lambda_1, \lambda_2, \dots, \lambda_m)$ is a nonincreasing nonnegative integer sequence.

One may represent a Young diagram as a collection of coordinates in $\pint_+^2$.
More specifically, in this representation $\lambda = \{(i, j): \lambda_i \ge j\}$.
For example $\lambda = (4, 3, 1, 1)$ has the 2d coordinate representation $\{(1, 1), (1, 2), (1, 3), (1, 4), (2, 1), (2, 2), (2, 3), (3, 1), (4, 1)\}$, and it can be visualised as follows, where we labelled some coordinates:
\begin{center}
  \includegraphics{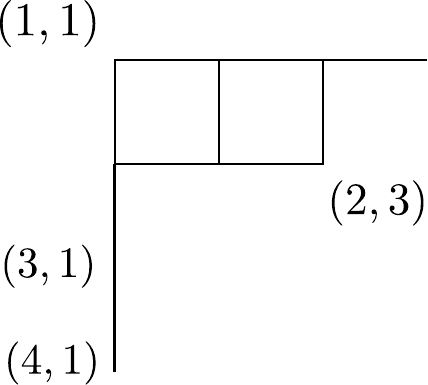}
\end{center}
We use these two representations without specifying under unambiguous contexts.
For example, an array restricted to $\lambda$ and denoted as $A(\lambda)$ uses the 2d coordinates representation.

A Gelfand-Tsetlin (GT) pattern is a triangular array $(\lambda^k_j)_{1 \le j \le k \le m}$ satisfying the interlacing constraints, that is
\begin{align*}
  \lambda^k \prec \lambda^{k + 1},
\end{align*}
where $a \prec b$ means
\begin{align*}
  b_1 \ge a_1 \ge b_2 \ge a_2 \ge \dots.
\end{align*}
The exact constraints of the GT pattern are thus
\begin{align*}
  \lambda^{k + 1}_{j + 1} \le \lambda^k_j \le \lambda^{k + 1}_j \qquad \forall k \ge j \ge 1
\end{align*}

We refer to the indices of the GT pattern coordinates in the following way.
Given a coordinate $\lambda^k_j$, we call the superscript ($k$ here) the level, the subscript ($j$ here) the edge.
Later when we consider the time evolution of the GT patterns, we use an argument in the bracket to denote time.
Therefore $\lambda^k_j (\ell)$ is the coordinate at time $\ell$, $k$th level and $j$th edge.

We visualise a GT pattern, for example with 5 levels as follows, where we also annotate the interlacing relations:
\begin{center}
\includegraphics[scale=.3]{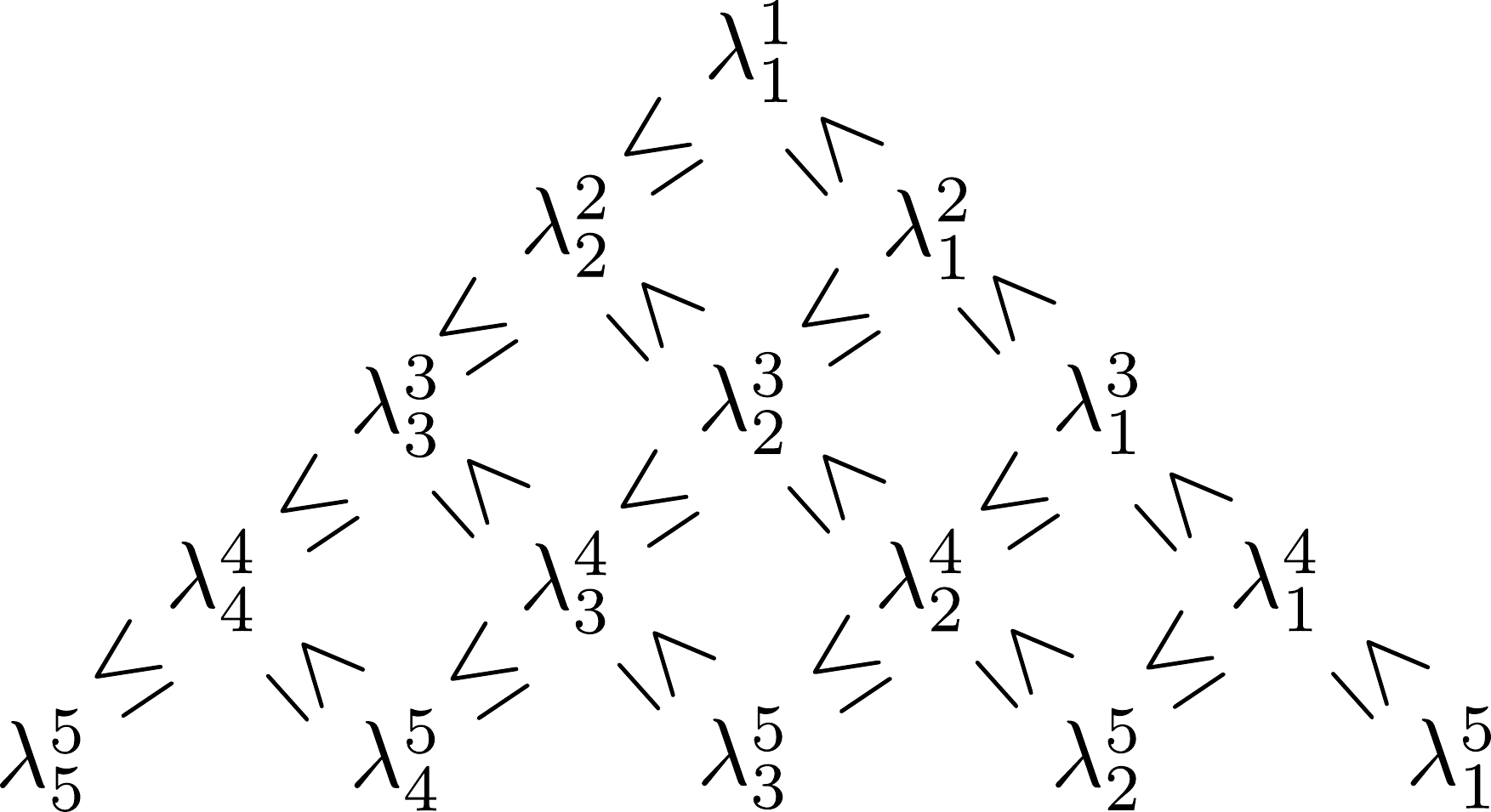}
\end{center}
So the levels correspond to rows and edges corresponds to edges from the right in the picture.

Throughout this article we do not take powers of $\lambda_j$ so the superscript on $\lambda$ is always an index rather than a power.
The same applies to notations $a^j_k$ in Noumi-Yamada description of the $q$RSK, as well as the $t$ in the proof of Theorem \ref{t:qlocalmoves}.

A semi-standard Young tableau, which we simply refer to as a tableau, is an Young diagram-shaped array filled with positive integers that are non-descreasing along the rows and increasing along the columns.
The underlying Young diagram is called the shape of the tableau.

A tableau $T$ corresponds to a GT pattern $(\lambda^k_j)$ in the following way:
\begin{align*}
  \lambda^k_j = \#\left\{\text{Number of entries in row }j\text{ that are no greater than }k \right\}
\end{align*}

For example 
$\begin{array}{ccccc}
  1&2&2&3&3\\2&3&4&\\4&&&
\end{array}$
is a tableau with shape $\lambda = (5, 3, 1)$ and GT pattern 
\begin{align*}
(\lambda^1_1, \lambda^2_1, \lambda^2_2, \lambda^3_1, \lambda^3_2, \lambda^3_3, \lambda^4_1, \lambda^4_2, \lambda^4_3, \lambda^4_4) = (1, 3, 1, 5, 2, 0, 5, 3, 1, 0)
\end{align*}

In this article we work on the GT patterns only since it is easier to manipulate symbolically than tableaux.
We use the terms GT patterns and tableaux interchangeably hereafter.

Clearly the shape of a tableau $(\lambda^k_j)_{1 \le j \le k \le m}$ is the bottom row in the visualisation of the GT pattern $\lambda^m = (\lambda^m_1, \lambda^m_2, \dots, \lambda^m_m)$.

The RSK algorithm takes in a matrix $A = (w_{i j})_{n\times m}$ as the input and gives a pair of tableaux $(P, Q)$ as the output.
We call the output $P$-tableau the insertion tableau, and the $Q$-tableau the recording tableau.
When we mention the output tableau without specifying, it is by default the $P$-tableau, as most of the time we focus on this tableau.
We usually denote the corresponding GT pattern for the $P$- and $Q$- tableaux as respectively $(\lambda^k_j)$ and $(\mu^k_j)$.

The RSK algorithm is defined by the insertion of a row $(a^1, a^2, \dots, a^m) \in \pint^m$ of nonnegative integers into a tableau $(\lambda^j_k)$ to obtain a new tableau $(\tilde\lambda^j_k)$.
We call this insertion operation the RSK insertion and postpone its exact definition to Definition \ref{d:nyrsk}.

When applying the RSK algorithm to a matrix $w_{1 : n, 1 : m}$, we start with an empty tableau $\lambda^k_j (0) \equiv 0$, and insert $w_{1, 1 : m}$ into $(\lambda^k_j (0))$ to obtain $(\lambda^k_j (1))$, then insert $w_{2, 1 : m}$ into $(\lambda^k_j (1))$ to obtain $(\lambda^k_j (2))$ and so on and so forth.
The output $P$-tableau is the GT pattern at time $n$: $\lambda^k_j = \lambda^k_j (n)$, and the $Q$-tableau is the GT pattern at level $m$: $\mu^\ell_j = \lambda^m_j (\ell)$.

For a traditional definition of the RSK insertion, see e.g. \cite{fulton97}.
The definition we give here is the Noumi-Yamada description.

\begin{dfn}[The Noumi-Yamada description of the RSK insertion]\label{d:nyrsk}
Suppose at time $\ell - 1$ we have a tableau $(\lambda^j_k) = (\lambda^j_k (\ell - 1))$ and want to RSK-insert row $w_{\ell, 1 : m}$ into it to obtain a new tableau $(\tilde\lambda^j_k) = (\lambda^j_k (\ell))$. 

This is achieved by initialising $a^{1 : m}_1 = w_{\ell, 1 : m}$ and recursive application (first along the edges, starting from $1$ and incrementing, then along the levels, starting from the edge index and incrementing) of the following
\begin{align*}
\tilde\lambda^k_k &= \lambda^k_k + a^k_k\\
\tilde\lambda^j_k &= a^j_k + (\lambda^j_k \vee \tilde\lambda^{j - 1}_k), \qquad j > k \\
a^j_{k + 1} &= a^j_k + \lambda^j_k - \tilde\lambda^j_k + \tilde\lambda^{j - 1}_k - \lambda^{j - 1}_k
\end{align*}
\end{dfn}

The Noumi-Yamada description does not rely on $w_{ij}$ being integers and hence extends the RSK algorithm to take real inputs.
Similarly one can define the Noumi-Yamada description for the gRSK algorithm, which is simply a geometric lifting of the RSK algorithm.
It also takes real inputs.
\begin{dfn}[The Noumi-Yamada description for the gRSK algorithm]
Suppose at time $\ell - 1$ we have a tableau $(z^j_k) = (z^j_k (\ell - 1))$ and want to gRSK-insert a row $w_{\ell, 1 : m}$ into it to obtain a new tableau $(\tilde z^j_k) = (z^j_k (\ell))$. 

This is done by initialising $(a^i_1)_{i = 1 : m} = (e^{w_{\ell, i}})_{i = 1 : m}$ and the recursive application (in the same way as in the RSK insertion) of the following
\begin{align*}
\tilde z^k_k &= z^k_k a^k_k\\
\tilde z^j_k &= a^j_k (z^j_k + \tilde z^{j - 1}_k), \qquad j > k \\
a^j_{k + 1} &= a^j_k {z^j_k \tilde z^{j - 1}_k \over \tilde z^j_k z^{j - 1}_k}
\end{align*}
\end{dfn}

Before defining the $q$RSK algorithm, let us review some $q$-deformations.
\subsection{$q$-deformations}\label{s:qdef}
A good reference of the $q$-deformations is \cite{gasper-rahman04}. In this article we assume $0 \le q < 1$.

Define the $q$-Pochhammers and the $q$-binomial coefficients as
\begin{align*}
  (\alpha; q)_k &= 
  \begin{cases}
    \prod_{i = 0 : k - 1} (1 - \alpha q^i) & k > 0\\
    1 & k = 0 \\
    \prod_{i = 1 : - k} (1 - \alpha q^{-i})^{-1} & k < 0
  \end{cases}\\
  (k)_q &= (q; q)_k \\
  {n \choose k}_q &= {(n)_q \over (k)_q (n - k)_q}
\end{align*}

We also define three $q$-deformed probability distributions.

\subsubsection{$q$-geometric distribution}
\begin{dfn}
  Given $\alpha \in (0, 1)$, a random variable $X$ is said to be distributed according to the $q$-geometric distribution $\qGeom(\alpha)$ if it has probability mass function (pmf)
  \begin{align*}
    f_X(k) = {\alpha^k \over (k)_q} (\alpha; q)_\infty, \qquad k = 0, 1, 2, \dots
  \end{align*}
\end{dfn}
The first moment of the $q$-geometric distribution with parameter $\alpha$ is
\begin{align}
  \sum_k {k \alpha^k \over (k)_q} (\alpha; q)_\infty = (\alpha; q)_\infty \alpha \partial_{\alpha} \sum {\alpha^k \over (k)_q} = \alpha (\log E_q)'(\alpha). \label{eq:qgeommoment}
\end{align}
where $E_q(\alpha) = (\alpha; q)_\infty^{-1}$ is a $q$-deformation of the exponential function (see for example (1.3.15) of \cite{gasper-rahman04}).

\subsubsection{$q$-binomial distribution}
There are several $q$-deformations of the binomial distribution. 
The one that is used in \cite{matveev-petrov15} to construct the $q$RSK is also called $q$-Hahn distribution. 
It appeared in \cite{povolotsky13}.
Apart from the dependency on $q$, it is has three parameters ($\xi, \eta, n$).
For $0 \le \eta \le \xi < 1$, and $n \in \pint \cup \{\infty\}$, the pmf is
  \begin{align*}
    \phi_{q, \xi, \eta} (k | n) = \xi^k {(\eta / \xi; q)_k (\xi; q)_{n - k} \over (\eta; q)_n} {n \choose k}_q, \qquad k = 0 : n
  \end{align*}
  The fact that it is a probability distribution can be found in, for example \cite[Exercise 1.3]{gasper-rahman04}.

\subsubsection{$q$-hypergeometric distribution}
The $q$-hypergeometric distribution we consider here is defined as follows. 
For $m_1, m_2, k \in \pint$ with $m_1 + m_2 \ge k$, $X \sim \qHyp(m_1, m_2, k)$ if the pmf of $X$ is
\begin{align*}
f_X(\ell) = q^{(m_1 - \ell)(k - \ell)}{{m_1 \choose \ell}_q {m_2 \choose k - \ell}_q \over {m_1 + m_2 \choose k}_q}
\end{align*}
The corresponding $q$-Vandermonde identity 
\begin{align*}
  \sum_\ell q^{(m_1 - \ell) (k - \ell)} {m_1 \choose \ell}_q {m_2 \choose k - \ell}_q = {m_1 + m_2 \choose k}_q
\end{align*}
can be proved directly by writing $(1 + x) (1 + q x) \cdots (1 + q^{m_1 + m_2 - 1} x)$ in two different ways.

As with the usual hypergeometric distribution, the support of $\qHyp(m_1, m_2, k)$ is 
\begin{align}
  0 \vee (k - m_2) \le \ell \le m_1 \wedge k \label{eq:qhypsupp}
\end{align}

When $m_2 = \infty$, the distribution is symmetric in $m_1$ and $k$:
\begin{align*}
  f_{\qHyp(m_1, \infty, k)} (\ell) = f_{\qHyp(k, \infty, m_1)} (\ell) = q^{(m_1 - \ell)(k - \ell)} {(m_1)_q (k)_q \over (\ell)_q (m_1 - \ell)_q (k - \ell)_q}, \qquad 0 \le \ell \le m_1 \wedge k
\end{align*}
This distribution appeared in \cite{blomqvist52}.

When $k = 0$ or $m_1 = 0$, by \eqref{eq:qhypsupp} the distribution is supported on $\{0\}$:
\begin{align}
  f_{\qHyp(0, m_2, k)} (s) = f_{\qHyp(m_1, m_2, 0)} (s) = \ind_{s = 0}. \label{eq:qhypk=0}
\end{align}

The fact that the $q$Hyp is a probability distribution yields the following identities, where the second follows by taking $m_2 = \infty$:

\begin{align}
  \sum_{s} q^{(m_1 - s)(k - s)} (m_1 - s)_q^{-1} (k - s)_q^{-1} (s)_q^{-1} (m_2 - k + s)_q^{-1} &= {(m_1 + m_2)_q \over (m_1)_q (m_2)_q (k)_q (m_1 + m_2 - k)_q}
  \label{eq:qhyp}\\
  \sum_{s} q^{(m_1 - s)(k - s)} (m_1 - s)_q^{-1} (k - s)_q^{-1} (s)_q^{-1} &= (m_1)_q^{-1} (k)_q^{-1} \label{eq:qhypinf}
\end{align}

\subsubsection{From the $q$-binomial distribution to the $q$-hypergeometric distribution}
The $q$-binomial distribution is related to the $q$-hypergeometric distribution in the following way:
\begin{lem}\label{l:qbinqhyp}
  For nonnegative integers $a \le b \ge c$, let $X$ be a random variable distributed according to $\phi_{q^{-1}, q^a, q^b} (\cdot | c)$, then $c - X$ is distributed according to $\qHyp(c, b - c, a)$.
\end{lem}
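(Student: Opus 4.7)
The plan is to prove the lemma by direct computation: write down the pmf of $c - X$ under the $q$-binomial law with parameters $(q^{-1}, q^a, q^b)$, convert every $q^{-1}$-object to a $q$-object via standard inversion identities, and recognize the result as $f_{\qHyp(c, b-c, a)}(\ell)$.

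Setting $k = c - \ell$, I would first substitute into the definition of $\phi$ to get
\begin{align*}
  \prob(c - X = \ell) = q^{a(c-\ell)} \frac{(q^{b-a}; q^{-1})_{c-\ell}\,(q^a; q^{-1})_\ell}{(q^b; q^{-1})_c} \binom{c}{\ell}_{q^{-1}}.
\end{align*}
Then I would apply two inversion formulas obtained by factoring $-xq^{-i}$ out of each term of $\prod_{i=0}^{n-1}(1 - xq^{-i})$:
\begin{align*}
  (x; q^{-1})_n = (-x)^n q^{-\binom{n}{2}} (x^{-1}; q)_n, \qquad \binom{n}{k}_{q^{-1}} = q^{-k(n-k)} \binom{n}{k}_q.
\end{align*}
Applying these to the three $q^{-1}$-Pochhammers and the $q^{-1}$-binomial, the signs $(-1)^{c-\ell}(-1)^\ell/(-1)^c$ cancel, while the powers of $q$ combine; a short manipulation using $\binom{c}{2} - \binom{\ell}{2} - \binom{c-\ell}{2} = \ell(c-\ell)$ shows the total exponent collapses to $(a-b)\ell$.

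At this point the expression involves $(q^{a-b}; q)_{c-\ell}$, $(q^{-a}; q)_\ell$, $(q^{-b}; q)_c$, which I would convert to ordinary $q$-Pochhammers by the identity
\begin{align*}
  (q^{-n}; q)_k = (-1)^k q^{-nk + \binom{k}{2}} \frac{(n)_q}{(n-k)_q}, \qquad k \le n,
\end{align*}
whose hypotheses $\ell \le a$, $c-\ell \le b-a$, $c \le b$ are precisely the support conditions \eqref{eq:qhypsupp} for $\qHyp(c, b-c, a)$ together with the assumption $c \le b$. A second round of sign cancellation follows, and the cumulative $q$-exponent simplifies to $(a-\ell)(c-\ell)$, matching the prefactor of the $\qHyp$ pmf.

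What remains is the purely combinatorial identity
\begin{align*}
  \frac{(b-a)_q (a)_q (b-c)_q}{(b-a-c+\ell)_q (a-\ell)_q (b)_q} \binom{c}{\ell}_q = \frac{\binom{c}{\ell}_q \binom{b-c}{a-\ell}_q}{\binom{b}{a}_q},
\end{align*}
which is immediate after expanding both $q$-binomials on the right. The main obstacle is none of the steps individually but the careful bookkeeping of signs and $q$-exponents in steps two and three; once one trusts the cancellations $(-1)^{c-\ell+\ell-c}=1$ and $\binom{c}{2}-\binom{\ell}{2}-\binom{c-\ell}{2}=\ell(c-\ell)$, the rest is mechanical. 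I would present the calculation as a single displayed chain of equalities, isolating the sign and the $q$-exponent computations in two brief side-remarks.
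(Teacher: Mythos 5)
Your proposal is correct and follows essentially the same route as the paper's proof: a direct pmf computation that converts the $q^{-1}$-Pochhammers and $q^{-1}$-binomial to $q$-objects via $(x;q^{-1})_n = (-x)^n q^{-\binom{n}{2}}(x^{-1};q)_n$ and $\binom{n}{k}_{q^{-1}} = q^{-k(n-k)}\binom{n}{k}_q$, then applies $(q^{-n};q)_k = (-1)^k q^{\binom{k}{2}-nk}(n)_q/(n-k)_q$. The only difference is the trivial reparametrization $\ell = c - s$ and that you spell out the exponent bookkeeping (correctly) in more detail.
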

\begin{proof}
  By the definition of the $q$-hypergeometric distribution it suffices to show
\begin{align}
  \phi_{q^{-1}, q^a, q^b}(s | c) = q^{s (s + a - c)} {b \choose a}_q^{-1} {c \choose s}_q {b - c \choose s + a - c}_q \label{eq:qbinhyp}
\end{align}
First we apply $(x; q^{-1})_n = (- 1)^n x^n q^{-{n \choose 2}} (x^{-1}; q)_n$ and ${n \choose k}_{q^{-1}} = q^{- k (n - k)} {n \choose k}_q$ to the left hand side to turn the $q^{-1}$-Pochhammers into $q$-Pochhammers.
The left hand side thus becomes
\begin{align*}
q^{(a - b)(c - s)} {(q^{a - b}; q)_s (q^{-a}; q)_{c - s} \over (q^{-b}; q)_c} {c \choose s}_q.
\end{align*}
Furthermore using $(q^{-n}; q)_k = {(n)_q \over (n - k)_q} (-1)^k q^{{k \choose 2} - n k}$ the above formula becomes the right hand side of \eqref{eq:qbinhyp}.
\end{proof}

\subsubsection{The ($t = 0$)-Macdonald polynomials and the $q$-Whittaker functions}
Let us define the $(t = 0)$-Macdonald polynomials. 
For $x = (x_{1 : N})$ and $\lambda = (\lambda_{1 : \ell})$ with $\ell \le N$, we redefine $\lambda$ by padding $N - \ell$ zeros to the end of it:
\begin{align*}
  \lambda := (\lambda_1, \lambda_2, \dots, \lambda_\ell, 0, 0, \dots, 0).
\end{align*}
Given a tableau $(\lambda^k_j)$ define its type $\ty((\lambda^k_j))$ by
\begin{align*}
  \ty((\lambda^k_j))_i :=
  \begin{cases}
    \lambda^1_1 & i = 1 \\
    \sum_{\ell = 1 : i} \lambda^i_\ell - \sum_{\ell = 1 : i - 1} \lambda^{i - 1}_\ell & i > 1
  \end{cases}
\end{align*}
Then the $(t = 0)$-Macdonald polynomials of rank $N - 1$ indexed by $\lambda$ and the $q$-Whittaker function $\psi_x(\lambda)$ are defined as
\begin{align*}
  P_\lambda (x) &= \sum_{(\lambda^k_j)_{1 \le j \le k \le N}, \lambda^{k - 1} \prec \lambda^k \forall k, \lambda^N = \lambda} x^{\ty((\lambda^k_j))} \prod_{1 \le j < k \le N} {\lambda^k_j - \lambda^k_{j + 1} \choose \lambda^k_j - \lambda^{k - 1}_j}_q,\\
  Q_\lambda (x) &= (\lambda_N)_q^{-1} P_\lambda(x) \prod_{i = 2 : N} (\lambda_i - \lambda_{i - 1})_q^{-1},\\
  \psi_x (\lambda) &= (\lambda_N)_q Q_\lambda (x).
\end{align*}
The $q$-Whittaker measure discussed in this article is the one induced by the Cauchy-Littlewood identity:
\begin{align*}
  \mu_{q\text{-Whittaker}} (\lambda) = P_\lambda (\alpha) Q_\lambda (\hat\alpha) \prod_{i j} (\hat\alpha_i \alpha_j; q)_\infty.
\end{align*}

\subsubsection{Classical limits}
In this section we let $q = e^{- \epsilon}$ for small $\epsilon > 0$ and collect some results concerning the classical limits.

Let
\begin{align*}
  A(\epsilon) = - {\pi^2 \over 6} \epsilon^{-1} - {1 \over 2} \log \epsilon + {1 \over 2} \log 2 \pi \\
\end{align*}

\begin{lem}\label{l:qclassicallimits} Let $q = e^{- \epsilon}$ and $m(\epsilon) = \epsilon^{-1} \log \epsilon^{-1}$, then
  \begin{enumerate}
    \item $(q^t; q)_\infty = \Gamma(t)^{- 1} \exp(A(\epsilon) + (1 - t) \log \epsilon + O(\epsilon))$. Specifically, $(q; q)_\infty = \exp(A(\epsilon) + O(\epsilon))$
    \item For $\alpha \ge 1$,
    $f_\alpha(y) := (\lfloor\alpha m(\epsilon) + \epsilon^{-1} y\rfloor)_q = 
      \begin{cases}
      \exp(A(\epsilon) + e^{- y} + O(\epsilon)) & \alpha = 1 \\
      \exp(A(\epsilon) + O(\epsilon)) & \alpha > 1
      \end{cases}$
    \item $\log (\lfloor \epsilon^{-1} y \rfloor)_q = \epsilon^{-1} \left( \Li_2(e^{- y}) - {\pi^2 \over 6} \right) + o(\epsilon^{-1})$, where 
      \begin{align*}
        \Li_2(x) = \sum_{n \ge 1} {x^n \over n^2} = - \int_0^x {\log(1 - u) \over u} du
      \end{align*}
      is the dilogarithm function.
  \end{enumerate}
\end{lem}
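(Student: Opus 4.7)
My plan is to handle the three parts in order, using the identity $(n)_q = (q;q)_n = (q;q)_\infty / (q^{n+1}; q)_\infty$ to reduce everything to the asymptotics of infinite $q$-Pochhammer symbols, and relying on standard classical-limit machinery.

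For part 1, the strategy is to invoke the $q$-Gamma function
\begin{align*}
\Gamma_q(t) = {(q;q)_\infty \over (q^t; q)_\infty} (1 - q)^{1 - t},
\end{align*}
which is known to converge to $\Gamma(t)$ as $q \to 1^-$. Rearranging gives $(q^t;q)_\infty = \Gamma_q(t)^{-1} (q;q)_\infty (1-q)^{1-t}$, and using $\log(1-q)^{1-t} = (1-t)\log\epsilon + O(\epsilon)$ reduces everything to computing the asymptotic of $\log(q;q)_\infty$. This is the main analytic obstacle: I would use the modular transformation of the Dedekind eta function $\eta(\tau) = q^{1/24}(q;q)_\infty$ with $\tau = i\epsilon/(2\pi)$, which under $\tau \mapsto -1/\tau$ sends $q$ to $\tilde q = e^{-4\pi^2/\epsilon}$. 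Since $(\tilde q; \tilde q)_\infty = 1 + O(e^{-4\pi^2/\epsilon})$, the transformation yields $(q;q)_\infty = \sqrt{2\pi/\epsilon}\, e^{-\pi^2/(6\epsilon)}(1 + o(1))$, whose logarithm is exactly $A(\epsilon) + O(\epsilon)$. Combining gives the claim, and the specialisation to $t=1$ follows since $\Gamma(1)=1$ and $(1-t)\log\epsilon = 0$, noting $(q;q)_\infty = (1-q)(q^2;q)_\infty$ and $1-q = \epsilon + O(\epsilon^2)$.

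For part 2, I would write $(\lfloor \alpha m(\epsilon) + \epsilon^{-1} y \rfloor)_q = (q;q)_\infty / (q^{n+1};q)_\infty$ with $n = \lfloor \alpha m(\epsilon) + \epsilon^{-1} y \rfloor$. Since $\epsilon\, m(\epsilon) = \log \epsilon^{-1}$, we have $q^{n+1} = \epsilon^{\alpha} e^{-y}(1 + O(\epsilon))$. Expanding $\log(q^{n+1};q)_\infty = \sum_{k\ge 0}\log(1 - q^{n+1} q^k) = -\sum_{m\ge 1} q^{m(n+1)}/(m(1-q^m))$, the leading term is $-q^{n+1}/(1-q) = -\epsilon^{\alpha-1} e^{-y}(1 + O(\epsilon))$, with higher-order terms of size $O(\epsilon^{2\alpha - 1})$ and smaller. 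When $\alpha = 1$ this gives $\log(q^{n+1};q)_\infty = -e^{-y} + O(\epsilon)$, and applying part 1 to $(q;q)_\infty$ yields the stated formula; when $\alpha > 1$ the leading term is itself $o(1)$, so $\log(q^{n+1};q)_\infty = O(\epsilon^{\alpha - 1})$, giving the second case.

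For part 3, the plan is to recognise the logarithm as a Riemann sum. Writing
\begin{align*}
\log (\lfloor \epsilon^{-1} y \rfloor)_q = \sum_{i = 1}^{\lfloor \epsilon^{-1} y\rfloor} \log(1 - e^{-i\epsilon}) = \epsilon^{-1} \cdot \epsilon \sum_{i=1}^{\lfloor \epsilon^{-1} y \rfloor} \log(1 - e^{-i\epsilon}),
\end{align*}
the sum on the right is a Riemann sum for $\int_0^y \log(1 - e^{-u})\, du$ with step $\epsilon$. Standard Riemann-sum estimates, taking care of the integrable singularity at $u = 0$ (handled by the $(q;q)_\infty$-type tail, or equivalently by splitting off an initial block of $O(\log\epsilon^{-1})$ terms), show the sum differs from the integral by $o(\epsilon^{-1})$. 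Finally, substituting $v = e^{-u}$ converts the integral to $\int_{e^{-y}}^1 \log(1-v)/v\, dv$, which by the stated definition of $\Li_2$ equals $\Li_2(e^{-y}) - \Li_2(1) = \Li_2(e^{-y}) - \pi^2/6$, giving the formula. The main obstacle across all three parts is the asymptotic of $(q;q)_\infty$ in part 1; once that is in hand, parts 2 and 3 are elementary expansions and Riemann-sum estimates.
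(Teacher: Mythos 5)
Your argument is correct in substance, but it is much more self-contained than what the paper actually does: the paper simply cites Theorem 3.2 of \cite{banerjee-wilkerson16} for Item 1 and Lemma 3.1 of \cite{gerasimov-lebedev-oblezin12} for Item 2, and only Item 3 is derived in the text, by exactly the Riemann-sum computation you give (the paper's display even has a typo, writing $\sum (1 - e^{-\epsilon k})$ for $\sum \log(1 - e^{-\epsilon k})$). Your route for Item 1 --- reducing to $(q;q)_\infty$ via the $q$-Gamma function and then extracting $(q;q)_\infty = \sqrt{2\pi/\epsilon}\, e^{-\pi^2/(6\epsilon)}(1 + o(1))$ from the modular transformation of $\eta$ --- is the standard proof underlying the cited reference, and your Lambert-series expansion $\log(q^{n+1};q)_\infty = -\sum_{m \ge 1} q^{m(n+1)}/(m(1-q^m))$ for Item 2 is likewise essentially how \cite{gerasimov-lebedev-oblezin12} proceeds. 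What your write-up buys is a proof readable without chasing two external references; what it costs is that you must supply quantitative error bounds that the citations package for free.

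Two small points deserve attention. First, in Item 1 you invoke only the qualitative convergence $\Gamma_q(t) \to \Gamma(t)$, which yields an $o(1)$ error in the exponent, whereas the statement claims $O(\epsilon)$; you need the rate $\log \Gamma_q(t) = \log \Gamma(t) + O(\epsilon)$ (available, e.g., from Moak's asymptotic expansion of the $q$-Gamma function, or directly from the same Lambert-series manipulation you use in Item 2 applied to $\log(q^t;q)_\infty - \log(q;q)_\infty$). Second, in Item 2 your own computation gives $\log(q^{n+1};q)_\infty = O(\epsilon^{\alpha - 1})$ for $\alpha > 1$, which is $O(\epsilon)$ only when $\alpha \ge 2$; for $1 < \alpha < 2$ the error is genuinely larger than stated. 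This is really a (harmless) imprecision in the lemma as stated --- everywhere the paper uses Item 2, $\alpha$ is a positive integer --- but your phrase ``giving the second case'' silently passes over it.
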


Item 1 can be found, for example as a special case of Theorem 3.2 in \cite{banerjee-wilkerson16}. Item 2 was proved as Lemma 3.1 of \cite{gerasimov-lebedev-oblezin12}. 
Item 3 can be derived as follows:
\begin{align*}
  \epsilon \log(\lfloor \epsilon^{-1} y \rfloor)_q = \epsilon \sum_{k = 1 : \lfloor \epsilon^{-1} y \rfloor} (1 - e^{-\epsilon k}) \approx \int_0^y \log (1 - e^{-t}) dt = \Li_2(e^{- y}) - {\pi^2 \over 6}.
\end{align*}

\subsection{The Noumi-Yamada description of the $q$RSK}
Now we can define a Noumi-Yamada description for the $q$RSK.

Throughout this article we adopt the convention that for any Young diagram $\lambda$, the $0$th edge are $\infty$: $\lambda_0 = \infty$.

\begin{thm}\label{t:noumiyamada}
The $q$RSK algorithm can be reformulated as the following Noumi-Yamada description.

Suppose at time $\ell - 1$ we have a tableau $(\lambda^j_k) = (\lambda^j_k (\ell - 1))$ and want to insert row $w_{\ell, 1 : m}$ into it to obtain a new tableau $(\tilde\lambda^j_k) = (\lambda^j_k (\ell))$. 
We initialise $a^{1 : m}_1 = w_{\ell, 1 : m}$ and recursively apply the following
\begin{align}
  \tilde\lambda^k_k &= \lambda^k_k + a^k_k \label{eq:leftboundary}\\
\tilde\lambda^j_k &= a^j_k + \lambda^j_k + \tilde\lambda^{j - 1}_k -\lambda^{j - 1}_k - \qHyp(\tilde\lambda^{j - 1}_k - \lambda^{j - 1}_k, \lambda^{j - 1}_{k - 1} - \tilde\lambda^{j - 1}_k, \lambda^j_k - \lambda^{j - 1}_k) \qquad j > k \notag\\
a^j_{k + 1} &= a^j_k + \lambda^j_k - \tilde\lambda^j_k + \tilde\lambda^{j - 1}_k - \lambda^{j - 1}_k \notag
\end{align}
\end{thm}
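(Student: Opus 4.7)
The plan is to start from the row-insertion definition of $q$RSK in \cite[Section 6.1]{matveev-petrov15}, which already operates cell-by-cell on the GT pattern with each update governed by a $q$-Hahn distribution, and to reexpress those updates using a $\qHyp$ variable via Lemma \ref{l:qbinqhyp}.

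First, I would unpack the insertion of the row $w_{\ell, 1:m}$ into $(\lambda^j_k(\ell-1))$ as a sweep through the GT pattern, processing edges $k = 1, 2, \dots$ in increasing order and, within each edge, levels $j = k, k+1, \dots, m$ in increasing order. On the diagonal $j = k$ there is no randomness and the whole input mass $a^k_k$ is added to $\lambda^k_k$, giving exactly \eqref{eq:leftboundary}. Off the diagonal, the Matveev--Petrov update at cell $(j,k)$ draws its randomness from $\phi_{q^{-1}, q^a, q^b}(\cdot \mid c)$ with parameters
\begin{align*}
a = \lambda^j_k - \lambda^{j-1}_k, \qquad b = \lambda^{j-1}_{k-1} - \lambda^{j-1}_k, \qquad c = \tilde\lambda^{j-1}_k - \lambda^{j-1}_k,
\end{align*}
partially consumes the available input mass $a^j_k$, and passes the remainder on to edge $k+1$.

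Second, I would apply Lemma \ref{l:qbinqhyp} to rewrite this sample as $c - X$ with $X \sim \qHyp(c, b-c, a) = \qHyp(\tilde\lambda^{j-1}_k - \lambda^{j-1}_k,\ \lambda^{j-1}_{k-1} - \tilde\lambda^{j-1}_k,\ \lambda^j_k - \lambda^{j-1}_k)$. Substituting and rearranging yields both the off-diagonal formula for $\tilde\lambda^j_k$ displayed in the theorem and, by conservation of input mass across the cell $(j,k)$, the propagation rule $a^j_{k+1} = a^j_k + \lambda^j_k - \tilde\lambda^j_k + \tilde\lambda^{j-1}_k - \lambda^{j-1}_k$ (equivalently the compact identification $a^j_{k+1} = X$). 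I would then check that the support bounds \eqref{eq:qhypsupp} of the $\qHyp$ variable precisely enforce the interlacing $\tilde\lambda^{j-1}_k \le \tilde\lambda^j_k \le \tilde\lambda^{j-1}_{k-1}$ and the non-negativity $a^j_{k+1} \ge 0$, so that $(\tilde\lambda^j_k)$ remains a GT pattern; the convention $\lambda^{j-1}_0 = \infty$ handles the top boundary, and at the right edge $k = m$ the leftover $a^j_{m+1}$ is simply discarded.

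The main obstacle is the careful alignment of conventions between \cite{matveev-petrov15} and this article: identifying the Matveev--Petrov $q$-Hahn parameters with the three differences of GT coordinates listed above, and checking that the hypothesis $a \le b \ge c$ of Lemma \ref{l:qbinqhyp} is automatically implied by the interlacing of the input pattern $(\lambda^j_k(\ell - 1))$ together with the partial updates $\tilde\lambda^{j-1}_k$ computed earlier in the sweep.
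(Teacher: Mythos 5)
Your proposal is correct and follows essentially the same route as the paper: recall the Matveev--Petrov row-insertion with its $q^{-1}$-deformed binomial ($q$-Hahn) splitting of the level-$(j-1)$ increments, identify the three parameters with the GT-coordinate differences $\lambda^j_k - \lambda^{j-1}_k$, $\lambda^{j-1}_{k-1} - \lambda^{j-1}_k$, $\tilde\lambda^{j-1}_k - \lambda^{j-1}_k$, and convert to a $\qHyp$ variable via Lemma \ref{l:qbinqhyp}, with $a^j_{k+1}$ identified as the leftward part of the split. The only cosmetic differences are the sweep order (which is immaterial given the dependency structure) and your inclusion of the interlacing/support check, which the paper defers to the lemma following the theorem.
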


\begin{proof}
  Let us recall the algorithm as described in \cite[Section 6.1 and 6.2]{matveev-petrov15}.
  In natural language it works as follows.
  Suppose we want to insert row $(a_{1 : m})$ into the tableau $(\lambda^j_k)_{1 \le j \le k \le m}$.
  The top particle $\lambda^1_1$ receives a push $a_1$ from the input row and finishes its movement.
  Recursively, when all the particles at level $j - 1$ finishes moving, the increment of the $k$th particle splits into two parts $l^{j - 1}_k$ and $r^{j - 1}_k$, which contribute to the increment of the $k + 1$th and the $k$th  particles at level $j$ respectively.
  The right increment $r^{j - 1}_k$ is a $q$-binomial distributed random variable.
  On top of that the rightmost particle $\lambda^j_1$ of the GT pattern receives a push $a_j$ from the input row.

  To be more precise we present a pseudocode description.

\begin{algorithm}[H]
  \SetKwInOut{Input}{input}\SetKwInOut{Output}{output}
  \Input{tableau $(\lambda^j_k)_{1 \le j \le k \le m}$, row $(a_{1 : m}) \in \mathbb N^n$.}
  \Output{tableau $(\tilde\lambda^k_j)_{1 \le j \le k \le \ell}$.}
\BlankLine
  Initialise $\tilde\lambda^1_1 := \lambda^1_1 + a_1$\;
  \For {$j := 2 : m$}{
    \For {$k := 1 : j$}{
      $\tilde\lambda^j_k := \lambda^j_k$
    }
    $\tilde\lambda^j_1 := \tilde\lambda^j_1 + a_k$\;
    \For {$k := 1 : j - 1$}{
      sample $r^{j - 1}_k \sim \phi_{q^{-1}, q^{\lambda^j_k - \lambda^{j - 1}_k}, q^{\lambda^{j - 1}_{k - 1} - \lambda^{j - 1}_k}}(\cdot | \tilde \lambda^{j - 1}_k - \lambda^{j - 1}_k)$\;
        $\tilde\lambda^j_k := \tilde\lambda^j_k + r^{j - 1}_k$\;
        $l^{j - 1}_k := \tilde\lambda^{j - 1}_k - \lambda^{j - 1}_k - r^{j - 1}_k$\;
        $\tilde\lambda^j_{k + 1} := \tilde\lambda^j_{k + 1} + l^{j - 1}_k$\;
      }
    }
 \caption{$q$RSK}
\end{algorithm}

Next, by matching $a^j_{k + 1}$ as $l^{j - 1}_k$, and noting $\tilde\lambda^{j - 1}_k - \lambda^{j - 1}_k = l^{j - 1}_k + r^{j - 1}_k$ and $\tilde\lambda^j_k = \lambda^j_k + l^{j - 1}_{k - 1} + r^{j - 1}_k$, and rewriting the $q$-binomial distribution as the $q$-hypergeometric distribution using Lemma \ref{l:qbinqhyp} we arrive at the Noumi-Yamada description of the $q$RSK.
\end{proof}

In this article we write $a^j_k(n)$ in place of $a^j_k$ when the insertion is performed at time $n$, namely to transform $(\lambda^j_k(n - 1))$ into $(\lambda^j_k(n))$.

An alternative way of writing down the Noumi-Yamada description is as follows
\begin{equation}
\begin{aligned}
\tilde\lambda^k_k &= \lambda^k_k + a^k_k\\
\tilde\lambda^j_k &= a^j_k + \lambda^j_k + \tilde\lambda^{j - 1}_k -\lambda^{j - 1}_k - a^j_{k + 1} \qquad j > k\\
a^j_{k + 1} &\sim \qHyp(\tilde\lambda^{j - 1}_k - \lambda^{j - 1}_k, \lambda^{j - 1}_{k - 1} - \tilde\lambda^{j - 1}_k, \lambda^j_k - \lambda^{j - 1}_k) 
\end{aligned}
  \label{eq:altny}
\end{equation}

\subsection{Properties of the $q$RSK}
As with the usual RSK, the $q$RSK preserves the interlacing constraints of the GT patterns along levels and time.
\begin{lem}[Lemma 6.2 of \cite{matveev-petrov15}]
  For $\lambda^{j - 1} \prec \lambda^j$ and $\lambda^{j - 1} \prec \tilde\lambda^{j - 1}$, after applying $q$RSK-inserting a row, we have
  \begin{align*}
  \lambda^j \prec \tilde \lambda^j, \qquad \tilde\lambda^{j - 1} \prec \tilde\lambda^j.
  \end{align*}
\end{lem}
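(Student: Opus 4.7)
The plan is to induct on the level $j$ (outer) and on the edge $k$ within each level (inner), using the support bounds \eqref{eq:qhypsupp} of the $\qHyp$ variable $a^j_{k+1}$ that drives the update \eqref{eq:altny}. I maintain as an auxiliary invariant that $a^j_k \ge 0$: this is immediate for $a^j_1 = w_{\ell, j} \in \pint$, and is preserved for $k \ge 2$ because $\qHyp$ is supported on $\pint$. Setting $m_1 = \tilde\lambda^{j-1}_k - \lambda^{j-1}_k$, $m_2 = \lambda^{j-1}_{k-1} - \tilde\lambda^{j-1}_k$, and $c = \lambda^j_k - \lambda^{j-1}_k$, the hypothesised interlacings $\lambda^{j-1} \prec \lambda^j$ and $\lambda^{j-1} \prec \tilde\lambda^{j-1}$ give $m_1, m_2, c \ge 0$ and $m_1 + m_2 = \lambda^{j-1}_{k-1} - \lambda^{j-1}_k \ge c$, so the distribution is well-defined.

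The upward inequalities $\tilde\lambda^j_k \ge \lambda^j_k$ and $\tilde\lambda^j_k \ge \tilde\lambda^{j-1}_k$ are nearly immediate. From \eqref{eq:altny},
\[
\tilde\lambda^j_k - \lambda^j_k = a^j_k + m_1 - a^j_{k+1}, \qquad \tilde\lambda^j_k - \tilde\lambda^{j-1}_k = a^j_k + c - a^j_{k+1},
\]
so the upper support bounds $a^j_{k+1} \le m_1 \wedge c$ combined with the invariant $a^j_k \ge 0$ close both. The boundary $k = j$ specialises to $\tilde\lambda^j_j = \lambda^j_j + a^j_j$; applying the same upper bounds to $a^j_j$ (in its role as the $a^j_{k+1}$ produced at $k = j - 1$) and using $\lambda^j_j \le \lambda^{j-1}_{j-1}$, one reads off $\tilde\lambda^j_j \le \lambda^j_{j-1} \wedge \tilde\lambda^{j-1}_{j-1}$, i.e.\ both downward inequalities against the rightmost particle.

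The remaining downward inequalities $\lambda^j_k \ge \tilde\lambda^j_{k+1}$ and $\tilde\lambda^{j-1}_k \ge \tilde\lambda^j_{k+1}$ (for $k + 1 < j$) are the substantive ones. Expanding $\tilde\lambda^j_{k+1}$ via \eqref{eq:altny} and pairing the upper bound on $a^j_{k+1}$ (either $\le m_1$ or $\le c$, depending on which of the two inequalities one is after) with the lower support bound $a^j_{k+2} \ge (\lambda^j_{k+1} - \lambda^{j-1}_k + \tilde\lambda^{j-1}_{k+1} - \lambda^{j-1}_{k+1})_+$, a short telescoping cancellation delivers both. The main obstacle is bookkeeping: the two inequalities at edge $k$ involve \emph{both} $a^j_{k+1}$ (to form $\tilde\lambda^j_k$) and $a^j_{k+2}$ (to form $\tilde\lambda^j_{k+1}$), so the inner induction must be sequenced so that both are in hand before interlacing at edge $k$ is concluded; modulo that, the identities reduce to one-line algebra.
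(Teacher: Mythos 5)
Your proposal is correct and follows essentially the same route as the paper: both arguments extract the four support bounds of the $\qHyp$ increments from \eqref{eq:qhypsupp} and \eqref{eq:altny} (nonnegativity and the two upper bounds $a^j_{k+1}\le m_1\wedge c$ for the upward inequalities; the upper bound on the incoming increment paired with the lower bound $a^j_{k+2}\ge c'-m_2'$ on the outgoing one for the downward inequalities), and close each interlacing by the same telescoping cancellation, with the boundary edge $k=j$ handled via $\tilde\lambda^j_j=\lambda^j_j+a^j_j$ and $\lambda^j_j\le\lambda^{j-1}_{j-1}$ exactly as in the paper. The only cosmetic difference is that you index the downward inequalities at edge $k+1$ while the paper writes them at edge $k$; the algebra is identical.
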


\begin{proof}
  By \eqref{eq:qhypsupp} and \eqref{eq:altny}
  \begin{align}
    a^j_k &\le \lambda^j_{k - 1} - \lambda^{j - 1}_{k - 1} \label{eq:l31} \\
    a^j_k &\le \tilde\lambda^{j - 1}_{k - 1} - \lambda^{j - 1}_{k - 1} \label{eq:l32} \\
    a^j_{k + 1} &\le (\tilde\lambda^{j - 1}_k \vee \lambda^j_k ) - \lambda^{j - 1}_k \label{eq:l33}\\
    a^j_{k + 1} &\ge \lambda^j_k - \lambda^{j - 1}_k - \lambda^{j - 1}_{k - 1} + \tilde\lambda^{j - 1}_k \label{eq:l34} 
  \end{align}
  When $j = k$, by \eqref{eq:altny},
  \begin{align*}
    \tilde\lambda^k_k = \lambda^k_k + a^k_k \ge 0
  \end{align*}
  When $j > k$, by \eqref{eq:altny}\eqref{eq:l33}
  \begin{align*}
    \tilde\lambda^j_k \ge a^j_k + (\lambda^j_k \vee \tilde\lambda^{j - 1}_k)
  \end{align*}
  Thus we have shown $\tilde\lambda^j_k \ge \lambda^j_k$ and $\tilde\lambda^j_k \ge \tilde\lambda^{j - 1}_k$.
  By \eqref{eq:altny}\eqref{eq:l31}\eqref{eq:l34} we have
  \begin{align*}
    \tilde\lambda^j_k - \lambda^j_{k - 1} \le \lambda^j_k - \lambda^j_{k - 1} + \tilde\lambda^{j - 1}_k - \lambda^{j - 1}_k + \lambda^j_{k - 1} - \lambda^{j - 1}_{k - 1} - (\lambda^j_k - \lambda^{j - 1}_k - \lambda^{j - 1}_{k - 1} + \tilde\lambda^{j - 1}_k) = 0
  \end{align*}
  Similarly for $j > k$ by \eqref{eq:altny}\eqref{eq:l32}\eqref{eq:l34} we have 
  \begin{align*}
    \tilde\lambda^j_k - \tilde\lambda^{j - 1}_{k - 1} \le \lambda^j_k + \tilde\lambda^{j - 1}_k - \lambda^{j - 1}_k + \tilde\lambda^{j - 1}_{k - 1} - \lambda^{j - 1}_{k - 1} - (\lambda^j_k - \lambda^{j - 1}_k - \lambda^{j - 1}_{k - 1} + \tilde\lambda^{j - 1}_k) = 0
  \end{align*}
  and for $j = k$ by \eqref{eq:altny}\eqref{eq:l32} and that $\lambda^{j - 1} \prec \lambda^j$ we have
  \begin{align*}
    \tilde\lambda^j_k - \tilde\lambda^{j - 1}_{k - 1} \le \lambda^j_k + \tilde\lambda^{j - 1}_{k - 1} - \lambda^{j - 1}_{k - 1} - \tilde\lambda^{j - 1}_{k - 1} = \lambda^j_k - \lambda^{j - 1}_{k - 1} \le 0.
  \end{align*}
\end{proof}

As with the usual RSK, we set the initial tableau to be empty: $\lambda^j_k (0) \equiv 0, \forall k \ge j \ge 1$.

The following lemma shows that the insertion does not ``propagate'' to the $k$th edge before time $k$.
\begin{lem}\label{l:0}
  Starting from the empty initial condition,
  \begin{align*}
    \lambda^j_k (n) = 0 \qquad \forall 0 \le n < k \le j
  \end{align*}
\end{lem}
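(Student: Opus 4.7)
The approach is induction on time $n$. The base case $n=0$ is immediate from the empty initial condition $\lambda^j_k(0) \equiv 0$. For the inductive step, I assume $\lambda^j_k(n-1) = 0$ for all $k \ge n$ and $j \ge k$, and aim to show $\lambda^j_k(n) = 0$ for $k \ge n+1$, $j \ge k$. I will analyse the $n$th insertion sweep-by-sweep in the index $k$, using the alternative Noumi-Yamada form \eqref{eq:altny}, together with the support collapse \eqref{eq:qhypk=0} of the $q$-hypergeometric distribution.

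The key observation is what happens during sweep $k = n$. For each $j \ge n$, the distribution governing $a^j_{n+1}$ is $\qHyp(\tilde\lambda^{j-1}_n - \lambda^{j-1}_n, \lambda^{j-1}_{n-1} - \tilde\lambda^{j-1}_n, \lambda^j_n - \lambda^{j-1}_n)$. By the inductive hypothesis, $\lambda^j_n(n-1) = \lambda^{j-1}_n(n-1) = 0$ (since $n \ge n$), so the third argument vanishes. By \eqref{eq:qhypk=0} the distribution is supported on $\{0\}$, so $a^j_{n+1} = 0$ deterministically for every $j \ge n+1$. This is the pivotal step: the $n$th sweep leaves no residual $a$-values entering the subsequent sweeps.

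From here I perform a secondary induction on $k \ge n+1$ (and within each sweep, on $j \ge k$). Assume $a^j_k = 0$ for all $j \ge k$ (established for $k = n+1$ above). For the diagonal entry, $\tilde\lambda^k_k = \lambda^k_k + a^k_k = 0 + 0 = 0$ using the outer hypothesis. For $j > k$, the recursion $\tilde\lambda^j_k = a^j_k + \lambda^j_k + \tilde\lambda^{j-1}_k - \lambda^{j-1}_k - a^j_{k+1}$ has all five terms equal to zero: the first by the $a$-induction, the next three by the outer hypothesis together with the $j$-induction, and the last because $a^j_{k+1} \sim \qHyp(\tilde\lambda^{j-1}_k - \lambda^{j-1}_k, \, \cdot, \, \lambda^j_k - \lambda^{j-1}_k) = \qHyp(0, \cdot, 0)$ collapses to $\{0\}$ by \eqref{eq:qhypk=0}. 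Thus $\tilde\lambda^j_k = 0$ and simultaneously $a^j_{k+1} = 0$, closing both inductions.

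The argument is essentially bookkeeping; there is no real analytical obstacle, only the need to order the inductions carefully so that each invocation of \eqref{eq:qhypk=0} is justified before it is used. The one subtlety worth flagging is that, in sweep $k = n$, the entries $\tilde\lambda^j_n$ themselves may well be nonzero (they absorb the incoming push $w_{n,1:m}$), so one must isolate exactly what the sweep guarantees, namely the vanishing of the outgoing $a^j_{n+1}$, rather than any statement about $\tilde\lambda^j_n$.
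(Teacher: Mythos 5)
Your proof is correct and follows essentially the same route as the paper's: an induction whose engine is the support collapse \eqref{eq:qhypk=0} of the $q$-hypergeometric distribution when its third argument vanishes, which forces all the increments $a^j_{k+1}$ feeding edges $k+1 > n$ to be identically zero. The paper organises this as a single induction over the partially ordered triples $(n, j, k)$ and substitutes the $\qHyp$ expressions for the $a$'s directly, whereas you separate the time induction from the edge/level sweeps and track the vanishing of the $a^j_k$ explicitly (including the pivotal sweep $k = n$), but the content is identical.
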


\begin{proof}
  We show this by induction.

  The empty initial condition is the initial condition for the induction.

  Assume for any $n', j', k'$ such that $0 \le n' < k' \le j', n' \le n, j' \le j, k' \le k, (n', j', k') \neq (n, j, k)$ we have
  \begin{align*}
    \lambda^{j'}_{k'}(n') = 0.
  \end{align*}
  
  If $j > k > 1$, then
  \begin{align*}
    \lambda^j_k (n) &= \lambda^j_k (n - 1) + \lambda^{j - 1}_k (n) - \lambda^{j - 1}_k (n - 1) \\
    &+ \qHyp(\lambda^{j - 1}_{k - 1} (n) - \lambda^{j - 1}_{k - 1} (n - 1), \lambda^{j - 1}_{k - 2}(n - 1) - \lambda^{j - 1}_{k - 1}(n), \lambda^j_{k - 1}(n - 1) - \lambda^{j - 1}_{k - 1} (n - 1)) \\
    &- \qHyp(\lambda^{j - 1}_{k}(n) - \lambda^{j - 1}_k(n - 1), \lambda^{j - 1}_{k - 1} (n - 1) - \lambda^{j - 1}_k (n), \lambda^j_k(n - 1) - \lambda^{j - 1}_k(n - 1))\\
    &= 0
  \end{align*}
  by the induction assumption and \eqref{eq:qhypk=0}.

  The other cases ($j = k$ and $k = 1$) are similar and less complicated.
\end{proof}

The following lemma can be viewed as the boundary case ``dual'' to \eqref{eq:leftboundary}. This duality will become clear when defining the $q$-local moves.
\begin{lem}\label{l:upperboundary}
  With empty initial condition, we have
  \begin{align*}
    \lambda^k_n (n) = 
    \begin{cases}
      \lambda^{k - 1}_n (n) + a^k_n (n) & k > n \\
      a^k_n (n) & k = n
    \end{cases}
  \end{align*}
\end{lem}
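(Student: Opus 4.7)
The plan is to invoke the Noumi-Yamada description of Theorem \ref{t:noumiyamada} at time $n$ and look at what happens along the $n$th edge, exploiting the preceding Lemma \ref{l:0} to zero out the old values on that edge.

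First I observe that by Lemma \ref{l:0}, applied with time index $n - 1$, we have $\lambda^{j}_n(n - 1) = 0$ for every $j \ge n$, since $n - 1 < n \le j$. In particular the entries $\lambda^k_n$ and $\lambda^{k-1}_n$ that appear on the right-hand side of the Noumi-Yamada update for $\tilde\lambda^k_n$ are both zero.

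For the base case $k = n$, the left-boundary rule \eqref{eq:leftboundary} immediately gives $\tilde\lambda^n_n = \lambda^n_n(n - 1) + a^n_n(n) = a^n_n(n)$. For $k > n$, the general update in Theorem \ref{t:noumiyamada} reads
\begin{align*}
\tilde\lambda^k_n = a^k_n + \lambda^k_n + \tilde\lambda^{k - 1}_n - \lambda^{k - 1}_n - \qHyp\bigl(\tilde\lambda^{k - 1}_n - \lambda^{k - 1}_n,\; \lambda^{k - 1}_{n - 1} - \tilde\lambda^{k - 1}_n,\; \lambda^k_n - \lambda^{k - 1}_n\bigr).
\end{align*}
Since $\lambda^k_n = \lambda^{k - 1}_n = 0$ at time $n - 1$, the third argument of the $\qHyp$ is $0$, and by \eqref{eq:qhypk=0} the sampled value is deterministically $0$. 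What remains is $\tilde\lambda^k_n = a^k_n + \tilde\lambda^{k-1}_n$, which is exactly $\lambda^k_n(n) = \lambda^{k-1}_n(n) + a^k_n(n)$.

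There is no real obstacle here; the only content is recognising that the degenerate case of the $\qHyp$ distribution ($k = 0$, cf. \eqref{eq:qhypk=0}) collapses the randomness on the boundary edge, so the recursion along the $n$th edge at time $n$ becomes purely additive in the input variables $a^k_n(n)$. This is the sense in which the lemma is dual to the left-boundary rule \eqref{eq:leftboundary}, where the additive behaviour came instead from $j = k$.
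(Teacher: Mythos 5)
Your proof is correct and follows essentially the same route as the paper's: apply Lemma \ref{l:0} at time $n-1$ to zero out $\lambda^k_n(n-1)$ and $\lambda^{k-1}_n(n-1)$, note that the third parameter of the $\qHyp$ then vanishes so \eqref{eq:qhypk=0} kills the random term, and read off the additive recursion, with the $k=n$ case handled directly by \eqref{eq:leftboundary}. No gaps.
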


\begin{proof}
  When $k > n$, by Theorem \ref{t:noumiyamada}, Lemma \ref{l:0} and \eqref{eq:qhypk=0}
  \begin{align*}
    \lambda^k_n(n) &= \lambda^{k - 1}_n (n) + a^k_n(n) + \lambda^k_n(n - 1) - \lambda^{k - 1}_n (n - 1)\\
    &- \qHyp(\lambda^{k - 1}_n(n) - \lambda^{k - 1}_n (n - 1), \lambda^{k - 1}_{n - 1}(n) - \lambda^{k - 1}_n (n), \lambda^k_n (n - 1) - \lambda^{k - 1}_n (n - 1))\\
    &= \lambda^{k - 1}_n (n) + a^k_n(n).
  \end{align*}

  When $k = n$, by Theorem \ref{t:noumiyamada} and Lemma \ref{l:0}
  \begin{align*}
    \lambda^k_n (n) = \lambda^k_n (n - 1) + a^k_n(n) = a^k_n (n)
  \end{align*}
\end{proof}

The next lemma shows that $q$RSK, like the usual RSK, is weight-preserving.
\begin{lem}\label{l:wt}
  Given empty initial condition, let $(\lambda^k_j (n))$ be the output of $q$RSK taking a matrix $(w_{i, j})$, then almost surely
  \begin{align*}
    \lambda^k_1 (n) + \lambda^k_2 (n) + \dots + \lambda^k_{k \wedge n}(n) = \sum_{i = 1 : n, j = 1 : k} w_{i, j}
  \end{align*}
\end{lem}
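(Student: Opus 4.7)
The plan is to induct on $n$, the number of rows inserted. The base case $n = 0$ is immediate from the empty initial condition. For the inductive step, set $\Delta^k_j := \lambda^k_j(n) - \lambda^k_j(n-1)$. By Lemma \ref{l:0} applied at time $n-1$ and edge $n$, the ``extra'' term $\lambda^k_n(n-1)$ that appears when $k\ge n$ vanishes, so the difference between the two sides of the claimed identity at times $n$ and $n-1$ reduces to
\[
\sum_{j=1}^{k\wedge n}\Delta^k_j \;=\; \sum_{j=1}^{k} w_{n,j}, \qquad \forall k \ge 1.
\]

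To prove this, I would recast the Noumi-Yamada recursion of Theorem \ref{t:noumiyamada} (with $k$ the level and $j$ the edge, matching the indexing of the lemma) in the form
\[
\Delta^k_k = a^k_k, \qquad \Delta^k_j = a^k_j - a^k_{j+1} + \Delta^{k-1}_j \quad \text{for } j<k,
\]
the second identity being the $a^k_{j+1}$ update rewritten so that the sampled $\qHyp$ variable is absorbed into $a^k_{j+1}$. Summing over $j$ telescopes the $a^k_j$ terms, and two cases arise. If $k\le n$, the sum runs up to $j=k$ and the boundary contribution $\Delta^k_k = a^k_k$ cancels the $-a^k_k$ from the telescope, leaving $a^k_1 + \sum_{j=1}^{k-1}\Delta^{k-1}_j$. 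If $k>n$, the sum stops at $j=n$ and a residual term $-a^k_{n+1}$ remains.

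The main (small) obstacle is disposing of this residual. Here I would invoke Lemma \ref{l:0}: at time $n-1$ and edge $n$, both $\lambda^k_n$ and $\lambda^{k-1}_n$ vanish whenever $k-1 \ge n$, so the third parameter $\lambda^k_n - \lambda^{k-1}_n$ of the $\qHyp$ variable defining $a^k_{n+1}$ is $0$, and \eqref{eq:qhypk=0} then forces $a^k_{n+1} = 0$ almost surely. Combining with the initialisation $a^k_1 = w_{n,k}$, both cases collapse to
\[
\sum_{j=1}^{k\wedge n}\Delta^k_j \;=\; w_{n,k} + \sum_{j=1}^{(k-1)\wedge n}\Delta^{k-1}_j.
\]
A secondary induction on $k$ with base case $\Delta^1_1 = a^1_1 = w_{n,1}$ then yields the single-step identity, closing the outer induction on $n$. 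Everything else is straightforward telescoping bookkeeping.
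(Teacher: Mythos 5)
Your argument is correct and is essentially the paper's proof: both rest on telescoping the $a^k_j$ terms in the Noumi--Yamada recursion within a double induction on $(n,k)$, with Lemma \ref{l:0} reconciling the range $j = 1 : k\wedge n$ with the full edge range. The only (cosmetic) difference is that the paper first proves the identity for the untruncated sum $\sum_{j=1:k}\lambda^k_j(n)$ and truncates at the very end, whereas you work with the truncated sum throughout and kill the residual $a^k_{n+1}$ directly via \eqref{eq:qhypk=0}.
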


\begin{proof}
  We first show by induction that
  \begin{align}
    \sum_{j = 1 : k} \lambda^k_j (n) = \sum_{i = 1 : n, j = 1 : k} w_{i, j} \label{eq:wt}
  \end{align}
  When $n = 0$ the empty initial condition shows that the above formula is true for all $k \ge 1$.

  By recursively applying \eqref{eq:leftboundary} and noting $a^1_1(n) = w_{n, 1}$, we see the above formula is true for $k = 1$ and all $n \ge 0$.

  Assuming the above formula is true for $(k', n') \in \{(k - 1, n), (k, n - 1), (k - 1, n - 1)\}$, summing over $j = 1 : k$ in \eqref{eq:altny}, and by noting $a^k_1(n) = w_{n, k}$ one has
  \begin{align*}
    \sum_{j = 1 : k} \lambda^k_j (n) &= \sum_{j = 1 : k} \lambda^k_j (n - 1) + \sum_{j = 1 : k - 1} \lambda^{k - 1}_j (n) - \sum_{j = 1 : k - 1} \lambda^{k - 1}_j (n - 1) + w_{n, k}\\
    &= \sum_{i = 1 : n - 1, j = 1 : k} w_{i, j} + \sum_{i = 1 : n, j = 1 : k - 1}w_{i, j} - \sum_{i = 1 : n - 1, j = 1 : k - 1} w_{i, j} + w_{n, k} = \sum_{i = 1 : n, j = 1 : k} w_{i, j}.
  \end{align*}
  Then using Lemma \ref{l:0} on \eqref{eq:wt} we arrive at the identity in the statement of the lemma.
\end{proof}

\subsection{The growth diagrams and the symmetry property}
The growth diagram was developed in \cite{fomin86,fomin95}, see also for example \cite[Section 5.2]{sagan00} for an exposition. For the RSK algorithm it is an integer lattice $[n] \times [m]$, where each vertex is labelled by a Young diagram, and each cell labelled by a number.
More specifically, the $(\ell, j)$-cell is labelled by $w_{\ell, j}$, the $(\ell, j)$-th entry of the input matrix, whereas the label of vertex $(\ell, j)$ is the Young diagram $\lambda^j_{1 : j} (\ell)$.

The local growth rule is a function $F_{\RSK}(\lambda, \mu^1, \mu^2, x)$ such that
\begin{align*}
  \lambda^j (\ell) = F_{\RSK} (\lambda^{j - 1} (\ell - 1), \lambda^{j - 1} (\ell), \lambda^j (\ell - 1), w_{j, \ell})
\end{align*}
for all $j$ and $\ell$.
The local growth rule generates the whole diagram.
To see this, one may label the boundary vertices $(0, 0 : m)$ and $(0 : n, 0)$ with the empty Young diagrams, and apply $F_\RSK$ recursively.

By the definition of the $P$- and $Q$-tableaux, the labels of the top row and the labels of the right most column of the growth diagram characterise the $P$- and $Q$-tableaux respectively.
Therefore the symmetry property of the RSK algorithm is reduced to the symmetry property of the local rule:
\begin{align*}
  F_{\RSK} (\lambda, \mu^1, \mu^2, x) = F_{\RSK} (\lambda, \mu^2, \mu^1, x).
\end{align*}
To see this, note that transposing the matrix amounts to transposing the lattice with the cell labels.
By the symmetry property of the local rule, it is invariant under this transposition, therefore one can transpose the vertex labels as well.
As a result the $P$- and $Q$-tableaux are swapped.
This argument will be made more symbolic in the proof of Theorem \ref{t:qsym}.

\subsection{The symmetry property for the $q$RSK}
In the case where the algorithm itself is randomised, or weighted, the local rule branches, and the weights can be placed on the edges.

One example of this is both the column and the row $q$RS defined in \cite{oconnell-pei13,borodin-petrov13} whose symmetry property was proved using this branching version of the growth diagram in \cite{pei14}.

In this section we prove the symmetry property for the $q$RSK in the same way.

\begin{proof}[Proof of Theorem \ref{t:qsym}]
  As in the RSK and $q$RS cases, we first show that the local rule is symmetric. 
  That is, writing $\lambda^m(n) = F_{qRSK}(\lambda^{m - 1} (n - 1), \lambda^{m - 1} (n), \lambda^m (n - 1), w_{m, n})$ then we show $F_{qRSK}(\lambda, \mu^1, \mu^2, x)$ is symmetric in $\mu^1$ and $\mu^2$:
  \begin{align*}
    F_{q\RSK}(\lambda, \mu^1, \mu^2, x) \overset{d}{=} F_{q\RSK}(\lambda, \mu^2, \mu^1, x)
  \end{align*}
  In the rest of the proof we write $F = F_{qRSK}$.

  As remarked before, we use the convention that for any Young diagram $\lambda$, $\lambda_0 = \infty$.
  
  By Theorem \ref{t:noumiyamada} we can write
  \begin{align}
    F(\lambda, \mu^1, \mu^2, x) = (F_k(\lambda, \mu^1, \mu^2, x_k))_{k \ge 1} \label{eq:localrule}
  \end{align}
  where
  \begin{align}
    F_k(\lambda, \mu^1, \mu^2, x_k) = \mu^2_k + \mu^1_k - \lambda_k + x_k - x_{k + 1} \label{eq:localrule2}
  \end{align}
  where $x_1 = x$ and $x_{k + 1} \sim \qHyp(\mu^1_k - \lambda_k, \lambda_{k - 1} - \mu^1_k, \mu^2_k - \lambda_k)$ has pmf symmetric in $\mu^1$ and $\mu^2$:
  \begin{align*}
    f_{x_{k + 1}} (s) &= q^{(\mu^1_k - \lambda_k - s) (\mu^2_k - \lambda_k - s)}\\
    &\times {(\mu^1_k - \lambda_k)_q (\lambda_{k - 1} - \mu^1_k)_q (\mu^2_k - \lambda_k)_q (\lambda_{k - 1} - \mu^2_k)_q \over (s)_q (\mu^1_k - \lambda_k - s)_q (\mu^2_k - \lambda_k - s)_q (\lambda_k + \lambda_{k - 1} - \mu^1_k - \mu^2_k + s)_q (\lambda_{k - 1} - \lambda_k)_q}.
  \end{align*}

  Note that the local rule $F$ does not ``see'' either the level or the time of the insertion.
  Therefore the Young diagrams have to be padded with infinitely many trailing $0$s.
  This is why we let the edge index $k$ range from $1$ to $\infty$ in \eqref{eq:localrule}.
  It is consistent with the Noumi-Yamada description in the boundary case $j = k$ and the ``null'' case $j < k$.
  When $j = k$,
  \begin{align*}
    \tilde \lambda^k_k = a^k_k + \lambda^k_k + \tilde \lambda^{k - 1}_k - \lambda^{k - 1}_k - \qHyp(\tilde\lambda^{k - 1}_k - \lambda^{k - 1}_k, \lambda^{k - 1}_{k - 1} - \tilde\lambda^{k - 1}_k, \lambda^k_k - \lambda^{k - 1}_k) = \lambda^k_k + a^k_k
  \end{align*}
  due to $\tilde\lambda^{k - 1}_k = \lambda^{k - 1}_k = 0$ and \eqref{eq:qhypk=0}.
  Similarly when $j < k$ all terms in the right hand side of \eqref{eq:localrule2} are $0$, so that $\tilde\lambda^j_k$ can stay $0$.

  The rest follows the same argument as in the proof of \cite[Theorem 3]{pei14}.
  Here we produce a less visual and more symbolic argument.

  Let $\pi = ((0, m) = (j_0, k_0) \to (j_1, k_1) \to \dots \to (j_{m + n}, k_{m + n}) = (n, 0))$ be a down-right path from $(0, m)$ to $(n, 0)$, that is $(j_i, k_i) - (j_{i - 1}, k_{i - 1}) \in \{(0, -1), (1, 0)\}$. 
  Let $G$ be the enclosed area of $(j, k)$:
  \begin{align*}
    G = \{(j', k'): \exists i \text{ such that } 0 \le j' \le j_i, 0 \le k' \le k_i\}
  \end{align*}
  Let $A(G) = \{w_{i, j}: i, j \ge 1, (i, j) \in G\}$ be the weights in the $G$-area.
  Denote by $L(A, \pi) = (\lambda^{k_i}(j_i))_i$ the Young diagrams labelled along $\pi$ with input $A$.
  It suffices to show that for any $(A, \pi)$ the $L$ satisfies a symmetry property:
  \begin{align}
    L(A, \pi) \overset{d}{=} L(A', \pi')^r \label{eq:sympath}
  \end{align}
  where $\pi' := ((k_{m + n}, j_{m + n}), (k_{m + n - 1}, j_{m + n - 1}), \dots, (k_1, j_1))$ is the transpose of $\pi$, and $\cdot^r$ is the reverse: $b^r := (b_\ell, b_{\ell - 1}, \dots, b_1)$ for a tuple $b = (b_1, b_2, \dots, b_\ell)$.

  The symmetry property of the follows when one takes $\pi = \pi^* = ((0, m) \to (1, m) \to \dots \to (n, m) \to (n, m - 1) \to \dots \to (n, 0))$.

  We show \eqref{eq:sympath} by induction.
  When $\pi = ((0, m) \to (0, m - 1) \to \dots \to (0, 0) \to (1, 0) \to \dots \to (n, 0))$ it is true because by the boundary condition all the diagrams along $\pi$ are empty.

  When \eqref{eq:sympath} is true for path $\pi$, assume $\pi \neq \pi^*$ (otherwise we are done), then there exists at least one $\ell$ such that $j_\ell + 1= j_{\ell + 1}$ and $k_\ell + 1 = k_{\ell - 1}$, that is $(j_{\ell - 1}, k_{\ell - 1}), (j_\ell, k_\ell), (j_{\ell + 1}, k_{\ell + 1})$ forms an L-shape.

  Now one can apply the symmetry of the local growth rule $F$ to the cell containing this L-shape, to obtain \eqref{eq:sympath} for $\pi^+$, where $\pi^+$ has the same coordinates as $\pi$ except that $(j_\ell, k_\ell) := (j_\ell + 1, k_\ell + 1)$.
\end{proof}

\section{$q$-polymer}\label{s:qpolymer}
\subsection{From RSK algorithms to polymers}
For the RSK algorithm, due to the Greene's Theorem \cite{greene74} the first edge of the output tableaux are the partition function of the directed last passage percolation (DLPP) of the input matrix:
Let $(\lambda^j_k(\ell))$ be the output of the RSK algorithm taking matrix $(w_{i, j})_{n \times m}$, then
\begin{align*}
  Z_0(\ell, j) := \lambda^j_1(\ell) = \max_{\pi: (1, 1) \to (\ell, j)} \sum_{(i, j) \in \pi} w_{i, j},
\end{align*}
where $\pi: (1, 1) \to (\ell, j)$ indicates $\pi$ is an upright path from $(1, 1)$ to $(\ell, j)$.

Locally, $Z_0$ satisfies the following recursive relation, which is what happens at the first edge in the Noumi-Yamada description:
\begin{align*}
  Z_0 (\ell, j) = (Z_0(\ell, j - 1) \vee Z_0(\ell - 1, j)) + w_{\ell, j}.
\end{align*}

Similarly for the gRSK, the first edge corresponds to the partition functions of the directed polymer (DP) of the matrix:
\begin{align*}
  Z_1(\ell, j) := \log z^j_1(\ell) = \log \left(\sum_{\pi: (1, 1) \to (\ell, j)} \prod_{(i, j) \in \pi} e^{w_{i, j}}\right)
\end{align*}

And locally we have
\begin{align*}
  Z_1(\ell, j) = \log(e^{Z_1(\ell, j - 1)} + e^{Z_1(\ell - 1, j)}) + w_{\ell, j}.
\end{align*}

Because of this, we define the $q$-polymer by focusing on the first edge $Z(n, m) := \lambda^m_1(n)$.
\footnote{Note that the first edge of the $q$RSK was regarded as an interacting particle system called the geometric $q$-pushTASEP in~\cite[Section 6.3]{matveev-petrov15}, which we will also consider in the next section.}
Then by the Noumi-Yamada description of the $q$RSK the $q$-polymer can be defined locally by
\begin{align*}
  Z_q(1, 1) &= w_{1, 1}, \\
  Z_q(n, 1) &= Z_q(n - 1, 1) + w_{n, 1}, \qquad n > 1\\
  Z_q(1, m) &= Z_q(1, m - 1) + w_{1, m}, \qquad m > 1\\
  Z_q(n, m) &= w_{n, m} + Z_q(n - 1, m) + Z_q(n, m - 1) - Z_q(n - 1, m - 1) - X' \\
  X' &\sim \qHyp(Z_q(n, m - 1) - Z_q(n - 1, m - 1), \infty, Z_q(n - 1, m) - Z_q(n - 1, m - 1)),\; m, n > 1
\end{align*}

It is not known whether a more global interpretation of $Z_q$ for $0 < q < 1$ exists, like the first definitions of $Z_0$ and $Z_1$ involving directed paths. 
More generally, the full Greene's theorem interpretes the sum of the first $k$ edges of a fixed level of the (g)RSK-output triangular patterns as similar statistics of $k$ directed non-intersecting paths, but the $q$ version of this theorem is also unknown, so is a sensible definition of the $q$ version of $k$-polymers.

But locally, the DLPP, DP and $q$-polymer models are very similar, as we shall see now.

\subsection{$q$-Burke property}\label{s:qburke}
Fix $\ell$ and $j$, let
\begin{align*}
  U_q &= Z_q(\ell, j - 1) - Z_q(\ell - 1, j - 1) \\
  V_q &= Z_q(\ell - 1, j) - Z_q(\ell - 1, j - 1) \\
  X_q &= w_{\ell, j} \\
  U_q' &= Z_q(\ell, j) - Z_q(\ell - 1, j) \\
  V_q' &= Z_q(\ell, j) - Z_q(\ell, j - 1)
\end{align*}

\begin{lem}
  For $0 \le q \le 1$ we have
  \begin{align*}
    U_q' - U_q = V_q' - V_q = X_q - X_q' \qquad (\text{B1}.q)
  \end{align*}
  where
  \begin{align*}
    X_q' = X_q'(U_q, V_q)
    \begin{cases}
      = U_0 \wedge V_0 & q = 0 \qquad (\text{B2}.0)\\
      \sim \qHyp(U_q, \infty, V_q) & 0 < q < 1 \qquad (\text{B2}.q)\\
      = - \log (e^{- U_1} + e^{- V_1}) & q = 1  \qquad (\text{B2}.1)  
    \end{cases}
  \end{align*}
\end{lem}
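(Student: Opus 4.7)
The plan is to read both (B1.$q$) and (B2.$q$) off directly from the local recursion that defines $Z_q$ in each of the three regimes; no auxiliary machinery is needed beyond the definitions given in the preceding subsection.

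For generic $0 < q < 1$, I would start from the defining relation
\begin{align*}
  Z_q(\ell, j) = w_{\ell, j} + Z_q(\ell - 1, j) + Z_q(\ell, j - 1) - Z_q(\ell - 1, j - 1) - X',
\end{align*}
where $X' \sim \qHyp(Z_q(\ell, j-1) - Z_q(\ell-1, j-1),\ \infty,\ Z_q(\ell-1, j) - Z_q(\ell-1, j-1)) = \qHyp(U_q, \infty, V_q)$. Setting $X_q' := X'$ is (B2.$q$), and subtracting $Z_q(\ell-1, j)$ from both sides of the recursion produces $U_q' = X_q + U_q - X_q'$, while subtracting $Z_q(\ell, j-1)$ produces $V_q' = X_q + V_q - X_q'$. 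Together these give (B1.$q$).

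For $q = 0$ I would substitute the DLPP recursion $Z_0(\ell, j) = (Z_0(\ell, j-1) \vee Z_0(\ell-1, j)) + w_{\ell, j}$ into $U_0' = Z_0(\ell, j) - Z_0(\ell-1, j)$, which collapses to $U_0' = X_0 + (U_0 - V_0)^+$. Then (B1.$0$) is immediate from $U_0' - U_0 = V_0' - V_0 = (U_0 - V_0)^+ - (U_0 \wedge V_0 - \min) = \dots$, and solving for $X_0'$ forces $X_0' = U_0 - (U_0 - V_0)^+ = U_0 \wedge V_0$, which is (B2.$0$). The case $q = 1$ is entirely parallel: plug $Z_1(\ell, j) = \log(e^{Z_1(\ell, j-1)} + e^{Z_1(\ell-1, j)}) + w_{\ell, j}$ into $U_1'$ to obtain $U_1' = X_1 + \log(1 + e^{U_1 - V_1})$, from which (B1.$1$) follows and one reads off $X_1' = U_1 - \log(1 + e^{U_1 - V_1}) = - \log(e^{-U_1} + e^{-V_1})$, giving (B2.$1$).

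There is no genuine obstacle; the entire lemma is bookkeeping on the three local rules for $Z_q$. The only points requiring care are (i) matching the arguments $(U_q, \infty, V_q)$ of the $\qHyp$ in (B2.$q$) with the arguments that appear in the $q$-polymer recursion under the substitution $(n, m) \mapsto (\ell, j)$, and (ii) in the $q = 0, 1$ cases, verifying the algebraic identities $u - (u-v)^+ = u \wedge v$ and $u - \log(1 + e^{u-v}) = -\log(e^{-u} + e^{-v})$, both of which are elementary.
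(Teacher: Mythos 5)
Your proposal is correct and is exactly the paper's argument: the paper's entire proof is the one-line remark that the lemma is ``immediate from the Noumi-Yamada descriptions at the first edge,'' and your computation simply makes explicit what that means, namely substituting the local recursions for $Z_0$, $Z_q$, $Z_1$ into the definitions of $U_q'$ and $V_q'$. The only blemish is the garbled intermediate expression $(U_0 - V_0)^+ - (U_0 \wedge V_0 - \min)$ in your $q=0$ case, but the surrounding identities $U_0' = X_0 + (U_0-V_0)^+$ and $U_0 - (U_0-V_0)^+ = U_0 \wedge V_0$ carry the argument correctly.
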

\begin{proof}
  Immediate from the Noumi-Yamada descriptions at the first edge.
\end{proof}
We call (B1.$q$) (B2.$q$) the Burke relations. When $q = 0$ or $1$, the Burke relations define the RSK algorithms because the dynamics are the same along all edges of the GT patterns, whereas when $q \in (0, 1)$ the $q$RSK dynamics in the non-first edges are different from the Burke relation.

Also for $q = 0$ or $1$, when $U_q$, $V_q$ and $X_q$ are random with certain distributions, the Burke relations yield the Burke properties in the DLPP and DP cases.

Let us recall the Burke properties in these two cases. 
For convenience we omit the subscripts.

\begin{prp}\label{p:burkegeom}
  Let $(U, V, X, U', V', X')$ satisfy the Burke relations (B1.$q$) and (B2.$q$).
  Suppose $(U, V, X)$ are independent random variables with one of the following distributions
  \begin{itemize}
    \item When $q = 0$ 
      \begin{itemize}
        \item Fix $0 < \alpha, \beta < 1$. Suppose $U \sim Geom(1 - \alpha)$, $V \sim Geom(1 - \beta)$ and $X \sim Geom(1 - \alpha \beta)$.
        \item Or fix $\alpha, \beta > 0$. Suppose $U \sim Exp(\alpha)$, $V \sim Exp(\beta)$ and $X \sim Exp(\alpha + \beta)$.
      \end{itemize}
    \item When $q = 1$, fix $\alpha, \beta > 0$. Suppose $\exp(-U) \sim Gamma(\alpha)$, $\exp(-V) \sim Gamma(\beta)$ and $\exp(-X) \sim Gamma(\alpha + \beta)$.
  \end{itemize}
  Then in each of the above cases
  \begin{align*}
    (U', V', X') \overset{d}{=} (U, V, X).
  \end{align*}
\end{prp}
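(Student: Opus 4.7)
In every case listed, the Burke relations determine $(U', V', X')$ as a deterministic function of $(U, V, X)$, so the task reduces to checking that this map is measure-preserving with respect to the product of the three input distributions. The plan is to handle each of the three cases by an explicit change of variables, splitting the domain into the two pieces where the $\min$ or $\log$-sum takes each of its two possible ``shapes''.

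For the exponential case, the map is piecewise linear: on $\{U < V\}$ one has $(U', V', X') = (X,\, V - U + X,\, U)$, and on $\{U > V\}$ one has $(U', V', X') = (U - V + X,\, X,\, V)$. Both pieces are affine with Jacobian determinant $\pm 1$, and the image of each piece is a half-space $\{U' < V'\}$ or $\{U' > V'\}$ inside $(0, \infty)^3$. Substituting into the product density $\alpha\beta(\alpha+\beta)\exp(-\alpha U - \beta V - (\alpha+\beta)X)$ and simplifying, both pieces collapse to $\alpha\beta(\alpha+\beta)\exp(-\alpha U' - \beta V' - (\alpha+\beta)X')$, which is the target density. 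The geometric case is the verbatim discrete analogue, with sums of geometric pmfs replacing integrals; the memoryless property of the geometric distribution is what makes the $|U-V|$ piece factor cleanly on each of the two regions.

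For the log-Gamma case, I would first transform to positive variables by setting $u = e^{-U}$, $v = e^{-V}$, $x = e^{-X}$, so that $u, v, x$ are independent $Gamma(\alpha), Gamma(\beta), Gamma(\alpha+\beta)$. A direct calculation from (B1.$1$)(B2.$1$) turns the Burke map into
\begin{align*}
  u' = \frac{ux}{u+v}, \qquad v' = \frac{vx}{u+v}, \qquad x' = u+v.
\end{align*}
Now introduce $T := u + v$ and $B := u/(u+v)$. By the classical beta--gamma algebra, $T \sim Gamma(\alpha+\beta)$ and $B \sim Beta(\alpha, \beta)$ are independent of each other (and of $x$). In the new coordinates the map reads $(u', v', x') = (Bx,\, (1-B)x,\, T)$, and a second application of the beta--gamma algebra (in reverse) identifies $Bx \sim Gamma(\alpha)$ and $(1-B)x \sim Gamma(\beta)$ as independent, with $T$ an independent $Gamma(\alpha+\beta)$ copy. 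Reverting $u' = e^{-U'}$ etc.\ then recovers the claim.

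The main obstacle is purely bookkeeping: in the exponential and geometric cases one must be careful that the two pieces of the piecewise map cover the target domain without overlap and that the boundary $\{U = V\}$ is handled correctly (it has measure zero in the exponential case and the Burke relations are consistent on it in the geometric case). In the log-Gamma case, the only nontrivial input is the beta--gamma algebra, but once that is in hand the identification of the output law is essentially a one-line calculation. No $q$-specific identity such as \eqref{eq:qhyp} is needed here, since this proposition concerns only the two endpoints $q \in \{0, 1\}$.
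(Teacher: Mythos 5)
Your proposal is correct, but note that the paper does not actually prove Proposition \ref{p:burkegeom}: it only cites the literature (\cite[Lemma 2.3]{seppalainen09} for the geometric case, \cite{balazs-cator-seppalainen06} for the exponential case, \cite{seppalainen12} for the log-Gamma case), so your argument is a self-contained substitute rather than a reproduction. It also differs in spirit from the one Burke property the paper does prove, the $q$-analogue Proposition \ref{t:qburke}: there the author computes the joint law of $(U',V',X')$ by summing over the value of $X$ and invoking the $q$-Vandermonde identity \eqref{eq:qhypinf}, which is forced because for $0<q<1$ the relation (B2.$q$) makes $X'$ genuinely random given $(U,V)$. You instead exploit the fact that at the endpoints $q\in\{0,1\}$ the Burke map is a deterministic, piecewise-invertible transformation, so measure preservation can be checked directly: the unit-Jacobian affine substitution on each of $\{U<V\}$ and $\{U>V\}$ handles the exponential case (and the same region bookkeeping applied to the monomial $\alpha^U\beta^V(\alpha\beta)^X$ handles the geometric case, where the boundary $\{U=V\}$ maps onto $\{U'=V'\}$ consistently), while the beta--gamma algebra applied to $(u',v',x')=(Bx,(1-B)x,T)$ handles the log-Gamma case; the only ingredient worth stating explicitly there is the converse direction of the beta--gamma algebra (independent $B\sim Beta(\alpha,\beta)$ and $G\sim Gamma(\alpha+\beta)$ yield independent $BG\sim Gamma(\alpha)$ and $(1-B)G\sim Gamma(\beta)$), which is standard. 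What your approach buys is a uniform, elementary treatment of all three classical cases in a few lines; what it gives up is any route to $0<q<1$, where the convolution-plus-identity method of Proposition \ref{t:qburke} is the one that generalises.
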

The Burke property with geometric weights can be found in e.g. \cite[Lemma 2.3]{seppalainen09}, the one with the exponential weights in \cite{balazs-cator-seppalainen06}, the one with log-gamma weights in \cite{seppalainen12}.

The $q$-Burke property is similar.
\begin{prp}\label{t:qburke}
  Let $(U, V, X, U', V', X')$ satisfy (B1.$q$) and (B2.$q$) with $0 < q < 1$. Let $0 < \alpha, \beta < 1$. Let $U, V$ and $X$ be independent random variables such that $U \sim \qGeom(\alpha)$, $V \sim \qGeom(\beta)$ and $X \sim \qGeom (\alpha \beta)$. 
Then $(U', V', X') \overset{d}{=} (U, V, X)$.
\end{prp}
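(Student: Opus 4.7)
The plan is to compute the joint pmf of $(U', V', X')$ directly by summing the joint pmf of $(U, V, X, X')$ over the fibre of the map $(U, V, X, X') \mapsto (U', V', X')$. By (B1.$q$) and (B2.$q$) this fibre is one-dimensional: $X'$ is carried to $X'$, while $U = U' - X + X'$ and $V = V' - X + X'$, so for a target triple $(a, b, c)$ the preimage is parametrised by the value $x := X$, with $(u, v, s) = (a + c - x,\, b + c - x,\, c)$.

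By independence of $U, V, X$ together with the conditional $\qHyp(u, \infty, v)$ law of $X'$, I would start from
\begin{align*}
  \prob(U = u, V = v, X = x, X' = s) &= (\alpha; q)_\infty (\beta; q)_\infty (\alpha\beta; q)_\infty \frac{\alpha^u \beta^v (\alpha\beta)^x}{(u)_q (v)_q (x)_q} \\
  &\quad \times q^{(u-s)(v-s)} \frac{(u)_q (v)_q}{(s)_q (u-s)_q (v-s)_q}.
\end{align*}
The $(u)_q$ and $(v)_q$ factors cancel at once. After the substitution, the weight $\alpha^u \beta^v (\alpha\beta)^x = \alpha^{a+c-x} \beta^{b+c-x} (\alpha\beta)^x = \alpha^{a+c} \beta^{b+c}$ loses all dependence on $x$; this cancellation is the algebraic heart of the Burke property and reflects the invariance of the geometric product weight along an upright path. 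Summing over $x$ yields
\begin{align*}
  \prob(U' = a, V' = b, X' = c) = (\alpha; q)_\infty (\beta; q)_\infty (\alpha\beta; q)_\infty \, \frac{\alpha^{a+c} \beta^{b+c}}{(c)_q} \sum_x \frac{q^{(a-x)(b-x)}}{(x)_q (a-x)_q (b-x)_q}.
\end{align*}
The remaining sum is precisely identity \eqref{eq:qhypinf} with $m_1 = a$ and $k = b$, which evaluates to $(a)_q^{-1} (b)_q^{-1}$. Plugging this in and regrouping gives
\begin{align*}
  \prob(U' = a, V' = b, X' = c) = \frac{\alpha^a (\alpha;q)_\infty}{(a)_q} \cdot \frac{\beta^b (\beta;q)_\infty}{(b)_q} \cdot \frac{(\alpha\beta)^c (\alpha\beta;q)_\infty}{(c)_q},
\end{align*}
which is the pmf of three independent $q$-geometric random variables with parameters $\alpha, \beta, \alpha\beta$, proving the claim.

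The main technical wrinkle is the bookkeeping of support: one needs $0 \le x \le a \wedge b$ to ensure $s \le u \wedge v$ on the $\qHyp$ side, and this is automatic provided one adopts the convention $(n)_q^{-1} = 0$ for $n < 0$, which is consistent with both the $\qHyp$ pmf and \eqref{eq:qhypinf}. Beyond this the argument is essentially a one-line application of the $q$-Vandermonde identity; I do not anticipate a genuine obstacle.
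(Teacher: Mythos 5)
Your proof is correct and is essentially the same as the paper's: both compute the joint pmf of $(U',V',X')$ by summing the product of the three $q$-geometric pmfs and the $\qHyp(U,\infty,V)$ pmf over the value of $X$, observe that the powers of $\alpha$ and $\beta$ lose their dependence on the summation variable, and finish with the $q$-Vandermonde identity \eqref{eq:qhypinf}. The only differences are notational (the paper sums over $y = X$ with target $(u,v,x)$), and your remark on the support convention is a harmless addition.
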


\begin{proof} By the definitions of the $q$-geometric and the $q$-hypergeometric distributions,
  \begin{align*}
    \prob(U' &= u, V' = v, X' = x) \\
    &= \prob(U + X = u + x, V + X = v + x, X' = x) \\
    &= \sum_y \prob(X = y, U = u + x - y, V = v + x - y, X' = x)\\
    &= \sum_y {(\alpha\beta)^y \over (y)_q} (\alpha\beta; q)_\infty {\alpha^{u + x - y} \over (u + x - y)_q} (\alpha; q)_\infty {\beta^{v + x - y} \over (v + x - y)_q} (\beta; q)_\infty \\
    &\;\;\;\;\;\;\;\times q^{(u - y)(v - y)} {(u + x - y)_q (v + x - y)_q \over (x)_q (u - y)_q (v - y)_q} \\
    &= (\alpha\beta; q)_\infty (\alpha; q)_\infty (\beta; q)_\infty {(\alpha\beta)^x \over (x)_q} \alpha^{u} \beta^{v} \sum_y {q^{(u - y) (v - y)} \over (y)_q (u - y)_q (v - y)_q} \\
    &= (\alpha\beta; q)_\infty (\alpha; q)_\infty (\beta; q)_\infty {(\alpha\beta)^x \over (x)_q} {\alpha^{u} \over (u)_q} {\beta^{v} \over (v)_q}
  \end{align*}
  where the last identity is due to \eqref{eq:qhypinf}.
\end{proof}

When $q = 0$ or $1$ the converse of Proposition \ref{p:burkegeom} is also true (see e.g. \cite{seppalainen12} for the $q = 1$ case). 
That is, the Burke relation and the indentification in law of the triplets implies the specific distributions (geometric, exponential and loggamma) under reasonable assumptions thanks to the characterisation results in~\cite{ferguson64,ferguson65,lukacs55}.
The converse of the $q$-Burke property is open.

The $q$-Burke property allows one to tackle the $q$-polymer on the $\pint^2$ lattice (obtained by a simple shift of the model on the $\pint_{>0}^2$ lattice in previous considerations) with the following condition:
\begin{align*}
  w_{0, 0} &= 0 \\
  w_{i, 0} &\sim \qGeom(\alpha), \qquad i \ge 1 \\
  w_{0, j} &\sim \qGeom(\beta), \qquad j \ge 1 \\
  w_{i, j} &\sim \qGeom(\alpha \beta) \qquad i, j \ge 1
\end{align*}
We call such a configuration a $q$-polymer with stationary boundary conditions.

Now we can show the strong law of large numbers of the partition functions.
\begin{proof}[Proof of Theorem \ref{t:lln}]
  The proof is similar to the version of DLLP with geometric weights, see e.g. \cite[Theorem 4.12]{romik14}.

  Let us consider the increment of $Z$ along the paths $(0, 0) \to (1, 0) \to \dots \to (\ell, 0) \to (\ell, 1) \to (\ell, 2) \to \dots \to (\ell, j)$.
  Let 
  \begin{align*}
    U(k) &= Z(k, 0) - Z(k - 1, 0), \qquad k = 1 : \ell \\
    V(k') &= Z(\ell, k') - Z(\ell, k' - 1), \qquad k' = 1 : j
  \end{align*}
  The horizontal increment $U(1 : \ell) = w_{1 : \ell, 0}$ are i.i.d. random variables with distribution $\qGeom(\alpha)$.
  And by using Proposition \ref{t:qburke} recursively, we have that the vertical increments $V(1 : j)$ are i.i.d. random variable with distribution $\qGeom(\beta)$.

  Using \eqref{eq:qgeommoment} we obtain \eqref{eq:qpolyexp}, and with the usual strong law of large numbers we obtain \eqref{eq:lln}.
\end{proof}

In \cite[Section 6.3]{matveev-petrov15}, the dynamics of the first edge of the tableaux was formulated as an interacting particle system, called the geometric $q$-pushTASEP.
Therefore it has a natural correspondence with the $q$-polymer, where $Z(n, m) + m = \xi_m(n)$ is the location of the $m$th particle at time $n$.

Here we describe the geometric $q$-pushTASEP whose initial condition corresponds to the $q$-polymer with stationary boundary condition.

\begin{dfn}[Stationary geometric $q$-pushTASEP]
  Let $(\xi_0 (n), \xi_1 (n), \dots)$ be the locations of the particles at time $n$, such that $\xi_0(n) \le \xi_1(n) \le \dots$ for all $n$.
  Initially, $\xi_0(0) = 0$, $\xi_m(0) - \xi_{m - 1}(0) - 1 \sim \qGeom(\beta)$.
  That is, the $0$th particle is at $0$, and the gaps between consecutive particles are independently $q$-geometric distributed random variables with parameter $\beta$.
  At time $n$, the $0$th particle jumps forward by a distance distributed according to $q$-geometric distribution with paramter $\alpha$, and sequentially given that the $m - 1$th particle has jumped, the $m$th particle jumps forward by a distance as a sum of a $q$-geometric random variable with paramter $\alpha \beta$ and a random variable $Y$ distributed according to $\xi_{m - 1} (n) - \xi_{m - 1} (n - 1) - 1 - Y \sim \qHyp(\xi_{m - 1} (n) - \xi_{m - 1}(n - 1), \infty, \xi_m (n - 1) - \xi_{m - 1}(n - 1) - 1)$.
\end{dfn}

Thus via the translation of (the arguments in the proof of) Theorem \ref{t:lln} we have
\begin{cly}
  Let $\xi_{0 : \infty}$ be the locations of the stationary geometric $q$-pushTASEP.
  Then we have the following
  \begin{enumerate}
    \item For any $j \ge 0$, the $j$th particle performs a simple random walk with increments distributed according to $\qGeom(\alpha)$.
      \item At any time, the gap between neighbouring particles are independently distributed according to $\qGeom(\beta)$.
      \item Almost surely, $\lim_{N \to \infty} {\xi_{\lfloor N y \rfloor} (\lfloor N x \rfloor)} / N = x \gamma(\alpha) + y (\gamma(\beta) + 1)$ for $x, y \ge 0$.
  \end{enumerate}
\end{cly}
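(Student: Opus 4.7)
The plan is to transport each claim across the bijection $\xi_m(n)=Z(n,m)+m$ between the stationary geometric $q$-pushTASEP and the $q$-polymer with stationary boundary conditions. Under this identification the horizontal polymer increments $U(k)=Z(k,m)-Z(k-1,m)$ become the temporal jumps $\xi_m(k)-\xi_m(k-1)$ of the $m$th particle, while the vertical polymer increments $V(k')=Z(n,k')-Z(n,k'-1)$ become (after subtracting the deterministic shift $1$) the gaps $\xi_{k'}(n)-\xi_{k'-1}(n)-1$. Once this dictionary is written down, the three assertions reduce to statements already established (or essentially established) in the proof of Theorem~\ref{t:lln}.

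For part~(1), I would fix $j$ and iterate Proposition~\ref{t:qburke} along the path $(0,0)\to(1,0)\to\dots\to(n,0)\to(n,1)\to\dots\to(n,j)$, using at each horizontal-to-vertical turn that the triple $(U,V,X)$ with the stationary marginals is preserved in law by the local Burke update. This is exactly the argument already used in the proof of Theorem~\ref{t:lln} to show that $V(1:j)\overset d= \qGeom(\beta)^{\otimes j}$; the same recursion, now read along rows instead of along the single final column, also tells us that each horizontal increment $U(k)$ at any column $j$ is $\qGeom(\alpha)$-distributed. Independence across $k$ for fixed $j$ follows because the $n$-coordinate indexes ``time'' in the Burke recursion, and the updates at successive times use fresh $X_{\ell,j}$ weights.

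For part~(2), the same Burke recursion, applied along the horizontal segment at the fixed time $n$, gives that $V(1),\dots,V(m)$ are i.i.d.\ $\qGeom(\beta)$; translating back with $\xi_m(n)-\xi_{m-1}(n)-1=V(m)$ yields the claim on gaps.

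Part~(3) is then immediate: write
\begin{align*}
\frac{\xi_{\lfloor Ny\rfloor}(\lfloor Nx\rfloor)}{N}=\frac{Z(\lfloor Nx\rfloor,\lfloor Ny\rfloor)}{N}+\frac{\lfloor Ny\rfloor}{N},
\end{align*}
and apply \eqref{eq:lln} of Theorem~\ref{t:lln} to the first term and the trivial limit $\lfloor Ny\rfloor/N\to y$ to the second.

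The only genuine subtlety, and the one step I would want to write out carefully, is the joint/independence structure underlying parts~(1) and~(2). The $q$-Burke property of Proposition~\ref{t:qburke} is a single-cell statement, so iterating it to obtain \emph{joint} independence of the collection of temporal increments (resp.\ spatial gaps) requires the standard inductive argument that at each step the ``new'' output triple $(U',V',X')$ is independent of everything to its northwest; this is the same induction carried out implicitly in the proof of Theorem~\ref{t:lln} and should be spelled out once to justify both (1) and (2) simultaneously.
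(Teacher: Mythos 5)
Your proposal is correct and follows exactly the route the paper takes: the paper derives this corollary purely "via the translation of (the arguments in the proof of) Theorem \ref{t:lln}" through the identification $\xi_m(n) = Z(n,m)+m$, which is precisely your dictionary, with parts (1)--(2) coming from iterating the $q$-Burke property and part (3) from \eqref{eq:lln} plus the deterministic shift. Your closing remark on spelling out the joint-independence induction is a reasonable point of care, but it is the same implicit induction the paper relies on, not a different argument.
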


\subsection{Classical limit of the Burke relations}
It is natural to guess that under the classical limit of $q$-Burke relation (B1.$q$) and (B2.$q$) becomes the Burke relation (B1.1) and (B2.1) of the DP.
Here we give a heuristic argument justifying this statement.
The argument may be compared to that in the proof of \cite[Lemma 8.17]{matveev-petrov15}.

In the rest of this section, for convenience we omit the $\lfloor \cdot \rfloor$ when an integer is required.
Given $U, V$ we define
\begin{align*}
  U_\epsilon &= m(\epsilon) + \epsilon^{-1} U \\
  V_\epsilon &= m(\epsilon) + \epsilon^{-1} V
\end{align*}
and sample $X_\epsilon \sim \qHyp(U_\epsilon, \infty, V_\epsilon)$, then by Items 2 and 3 of Lemma \ref{l:qclassicallimits}
\begin{align*}
  \prob(X_\epsilon &= m(\epsilon) + \epsilon^{-1} x) = q^{\epsilon^{- 2} (U - x) (V - x)} {(m(\epsilon) + \epsilon^{-1} U)_q (m(\epsilon) + \epsilon^{-1} V)_q \over (m(\epsilon) + \epsilon^{-1} x)_q (\epsilon^{-1}(U - x))_q (\epsilon^{-1} (V - x))_q}\\
  &= \exp( \epsilon^{-1}(- (U - x) (V - x) + {\pi^2 \over 6} - \Li_2(e^{x - U}) - \Li_2(e^{x - V})) + o(\epsilon^{-1})) \\
  &=: \exp(\epsilon^{-1} f(x) + o(\epsilon^{-1}))
\end{align*}

Using the reflection property of the dilogarithm function
\begin{align*}
  \Li_2(z) + \Li_2(1 - z) = {\pi^2 \over 6} - \log z \log (1 - z)
\end{align*}
we have
\begin{align*}
  f(- \log (e^{- U} + e^{- V})) = 0.
\end{align*}

By taking derivatives of $f$ we also have
\begin{align*}
  f'(- \log (e^{- U} + e^{- V})) = 0\\
  f''(x) < 0
\end{align*}
Hence $f$ achieves unique maximum $0$ at $- \log (e^{- U} + e^{- V})$.

Now we can define $X(\epsilon)$ by the relation $X_\epsilon = m(\epsilon) + \epsilon^{-1} X(\epsilon)$ and obtain 
\begin{align*}
  X(\epsilon) \to X' = - \log (e^{- U} + e^{- V})
\end{align*}
and we have recovered (B2.1).

\section{$q$-local moves}\label{s:qlocalmoves}
In this section we define the $q$-local moves and prove Theorem \ref{t:lmpush}.

In a sense, the local moves are very fundamental building blocks, as they unify the PNG and the RSK.

Let us define an object by adding to a 2 by 2 matrix a labelled edge connecting the $21$- and $22$-entries:
$
\begin{pmatrix}
  a & & b \\ c & \underset{e}{\text{---}} & d
\end{pmatrix}
$

The $q$-deformation of local moves consist of two maps:
\begin{align*}
  l:
  \begin{pmatrix}
    a & & b \\ c & \underset{e}{\text{---}} & d
  \end{pmatrix}
  \mapsto
  \begin{pmatrix}
    a' & b \\ c & b + c + d - a - a'
  \end{pmatrix};
  \qquad
  l':
  \begin{pmatrix}
    a & b \\ c & d
  \end{pmatrix}
  \mapsto
  \begin{pmatrix}
    a & & b \\ c & \underset{d - c}{\text{---}} & d
  \end{pmatrix}
\end{align*}
where $a'$ is a random variable with $q$-hypergeometric distribution $\qHyp(c - a, e, b - a)$.

On the boundary we define
\begin{align}
  l: 
  \begin{pmatrix}
      & &   \\ c & \underset{-}{\text{---}} & d
  \end{pmatrix}
  &\mapsto
  \begin{pmatrix}
     &  \\ c & c + d
   \end{pmatrix}; \label{eq:lmb1}\\
  \begin{pmatrix}
    & & b \\ & \underset{-}{\text{---}} & d
  \end{pmatrix}
  &\mapsto
  \begin{pmatrix}
    & b \\ & b + d
  \end{pmatrix}; \label{eq:lmb2}\\
  \begin{pmatrix}
    && \\ & \underset{-}{\text{---}}  & d
  \end{pmatrix}
  &\mapsto
  \begin{pmatrix}
    & \\ & d
  \end{pmatrix}. \label{eq:lmb3}
\end{align}
And
\begin{align*}
  l': 
  \begin{pmatrix}
     &  \\ c & d
  \end{pmatrix}
  &\mapsto
  \begin{pmatrix}
      & &   \\ c & \underset{-}{\text{---}} & d
  \end{pmatrix};\\
  \begin{pmatrix}
    & b \\ & d
  \end{pmatrix} 
  &\mapsto
  \begin{pmatrix}
    & & b \\ & \underset{-}{\text{---}} & d
  \end{pmatrix}; \\
  \begin{pmatrix}
    & \\ & d
  \end{pmatrix}
  &\mapsto
  \begin{pmatrix}
    && \\ & \underset{-}{\text{---}}  & d
  \end{pmatrix}.
\end{align*}

Given an array $A = (w_{i j})_{i, j \ge 1}$ with labelled horizontal edges connecting neighbouring entries in the same rows, let $l_{ij}$ and $l'_{ij}$ be $l$ and $l'$ acting on the $(i, j)$-th sub-2 by 2 matrix, namely $l_{i, j}$ (resp. $l'_{i, j}$) acts on $A$ by acting on
$ \begin{pmatrix} w_{i - 1, j - 1} & & w_{i - 1, j} \\ w_{i, j - 1} & \underset{e_{i j}}{\text{---}} & w_{i, j} \end{pmatrix} $ 
\Bigg(resp. 
$ \begin{pmatrix} w_{i - 1, j - 1} & w_{i - 1, j} \\ w_{i, j - 1} & w_{i, j} \end{pmatrix} $
\Bigg)
and keeping other entries unchanged.

Similarly as in \cite{oconnell-seppalainen-zygouras14}, define $\rho_{i j}$ by
\begin{align*}
  \rho_{i j} &= (l'_{(i - 1 - j)^+ + 1, (j - i + 1)^+ + 1} \circ \dots \circ l'_{i - 2, j - 1} \circ l'_{i - 1, j} )\\
  &\qquad\qquad\circ (l_{(i - j)^+ + 1, (j - i)^+ + 1} \circ \dots \circ l_{i - 1, j - 1} \circ  l_{i j})
\end{align*}
where for any integer $n$ we denote $(n)^+ := n \vee 0$ to be the positive part of $n$.
The operator $l'_{ij}$ are purely auxiliary, as they only serve to store the differences like $t_{i, j} - t_{i, j - 1} = \lambda^{j - 1}_{k - 1} - \tilde\lambda^{j - 1}_{k}$ before $t_{i, j}$ is unrecoverably changed (see the proof of Theorem \ref{t:qlocalmoves} for more details).

Given an input array $(w_{i j})$, we initialise by labelling all the horizontal edges between $w_{i - 1, j}$ and $w_{i, j}$ with $e_{i, j} = \infty$. 

For two paritions $\lambda$ and $\mu$, denote by $\lambda \nearrow \mu$ if $\lambda \prec \mu$ and $\mu = \lambda + \ve_i$ for some $i$.

Let $\Lambda$ be a Young diagram of size $N$, and $\emptyset = \Lambda(0) \nearrow \Lambda(1) \nearrow \Lambda(2) \nearrow \dots \nearrow \Lambda(N) = \Lambda$ be a sequence of growing Young diagrams, which we call a growth sequence of $\Lambda$.
For $\lambda \nearrow \mu$, denote $\mu / \lambda$ as the coordinate of the box added to $\lambda$ to form $\mu$.
For example, if $\lambda = (4, 2, 1)$ and $\mu = (4, 3, 1)$ then $\mu / \lambda = (2, 3)$.
As aside, it is well known that a growth sequence $\Lambda(0 : N)$ of $\Lambda$ corresponds to a standard Young tableau $T$ of shape $\Lambda$, where $T$ can be obtained by filling the box with coordinate ${\Lambda(i) / \Lambda(i - 1)}$ by $i$.
Now define
\begin{align*}
  T_\Lambda = \rho_{\Lambda / \Lambda(N - 1)} \circ \dots \circ \rho_{\Lambda(2) / \Lambda(1)} \circ \rho_{\Lambda(1) / \emptyset}
\end{align*}
to be an operator acting on integer arrays on $\pint_{>0}^2$.
It does not depend on the choice of the sequence as we shall see in the proof of Theorem \ref{t:qlocalmoves}, hence it is well defined.

Denote by $S(\Lambda)$ the boundary of $\Lambda$:
\begin{align*}
  S(\Lambda) = \{(i, j) \in \Lambda: (i + 1, j + 1) \not\in \Lambda\}
\end{align*}
The set $S(\Lambda)$ determines a coordinate system of all cells in $\Lambda$.
To see this, for any $(i', j') \in \Lambda$, there exists unique $(i, j) \in S(\Lambda)$ and unique $k \ge 1$ such that $(i', j')$ and $(i, j)$ are at the same diagonal and their ``distance'' is $k - 1$:
\begin{align*}
  i - i' = j - j' = k - 1
\end{align*}
In this case we call $(i, j, k)$ the $\Lambda$-coordinate of $(i', j')$.

In the following, for some big integers $\hat N$ and $\hat M$, let $I(\Lambda) := [\hat N] \times [\hat M]$ be a rectangular lattice covering $\Lambda$: $I(\Lambda) \supset \Lambda$.

\begin{thm}\label{t:qlocalmoves}
  Let $(t_{ij}) = T_\Lambda A$.
  For any $(i', j') \in \Lambda$ with $\Lambda$-coordinate $(i, j, k)$
  \begin{align*}
    t_{i', j'} = \lambda^j_k(i).
  \end{align*}
  where $(\lambda^j_k(i))$ is the output of $q\RSK(A(I(\Lambda)))$. Note the above equality is an identity in joint distribution over all boxes $(i', j')$.

  Specifically when $\Lambda = [n] \times [m]$ is a rectangular lattice, by specifying $I(\Lambda) = \Lambda$ the $P$- and $Q$-tableaux of $q\RSK(A(\Lambda))$
\begin{align*}
  \lambda^k_j &= t_{n - j + 1, k - j + 1}, \qquad j = 1 : k \wedge n, k = 1 : m \\
  \mu^k_j &= t_{k - j + 1, m - j + 1}, \qquad k = 1 : n, j = 1 : k \wedge m
\end{align*}
form exactly the output matrix $T_\Lambda A(\Lambda)$, thus the local moves coincide with the $q$RSK algorithm taking the matrix $A(\Lambda)$ in this case.
\end{thm}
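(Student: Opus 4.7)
The plan is to induct on the length $N$ of a fixed growth sequence $\emptyset = \Lambda(0) \nearrow \Lambda(1) \nearrow \dots \nearrow \Lambda(N) = \Lambda$, with induction hypothesis that after applying $\rho_{\Lambda(r)/\Lambda(r-1)} \circ \dots \circ \rho_{\Lambda(1)/\emptyset}$ to $A$, every entry $t_{i',j'}$ with $(i',j') \in \Lambda(r)$ equals $\lambda^j_k(i)$, where $(i,j,k)$ is the $\Lambda(r)$-coordinate of $(i',j')$ and $(\lambda^j_k(\cdot))$ is the $q$RSK-output of $A(I(\Lambda))$. At the inductive step, if $(i,j) = \Lambda(r)/\Lambda(r-1)$, then $\rho_{i,j}$ only modifies entries on the diagonal through $(i,j)$ from the boundary up to $(i,j)$, together with the horizontal edge labels on that diagonal.

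The central calculation is to check that each two-by-two $l$-move inside $\rho_{i,j}$ implements exactly one instance of the Noumi-Yamada recursion from Theorem \ref{t:noumiyamada}. Under the dictionary $t_{i',j'} \leftrightarrow \lambda^j_k(i)$, the quantities $(a,b,c,d,e,a')$ in the definition of $l$ become the old and new tableau entries together with a difference stored by a previous $l'$-move; the $\qHyp(c-a, e, b-a)$ sample then matches, cell by cell, the $\qHyp(\tilde\lambda^{j-1}_k - \lambda^{j-1}_k, \lambda^{j-1}_{k-1} - \tilde\lambda^{j-1}_k, \lambda^j_k - \lambda^{j-1}_k)$ sample of the Noumi-Yamada rule, and the affine update $b + c + d - a - a'$ matches the recursion for $\tilde\lambda^j_k$. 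The role of the auxiliary $l'$-moves is precisely to reinstate the horizontal edges with the correct differences, so that subsequent $\rho$'s receive the right $\qHyp$-parameters on input. The boundary patterns \eqref{eq:lmb1}--\eqref{eq:lmb3}, together with the initialisation $e_{i',j'} = \infty$ and \eqref{eq:qhypk=0}, reproduce \eqref{eq:leftboundary} and Lemma \ref{l:upperboundary} at the two ends of the cascade.

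For independence of the growth sequence, I would follow \cite{hopkins14} and \cite{nguyen-zygouras16}: any two growth sequences of $\Lambda$ are connected by swaps of consecutive additions $(i_1, j_1), (i_2, j_2)$ lying on non-adjacent diagonals, and for such pairs $\rho_{i_1, j_1}$ and $\rho_{i_2, j_2}$ act on disjoint sets of cells and edges, hence commute trivially. The rectangular special case $\Lambda = [n] \times [m]$ with $I(\Lambda) = \Lambda$ is then immediate, since the $\Lambda$-coordinates of $(n-j+1, k-j+1)$ and $(k-j+1, m-j+1)$ identify them respectively with the $P$- and $Q$-tableau entries at the final time.

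I expect the main obstacle to be the careful bookkeeping inside the inductive step: one must track how the $\Lambda(r)$-coordinate of a given cell shifts consistently as $r$ grows, and verify that each $l$-move's $\qHyp$-arguments, read off from the current array entries and edge labels right before the move, are exactly those appearing in the Noumi-Yamada recursion at the corresponding tableau position. The joint distribution statement then follows automatically, since at every step the random input driving the $l$-move is identified with the random input driving the corresponding $\qHyp$ sample in the Noumi-Yamada dynamics.
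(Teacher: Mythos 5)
Your proposal is correct and follows essentially the same route as the paper: an outer induction over a growth sequence of $\Lambda$, an inner diagonal-by-diagonal check that each $l$-move reproduces one step of the Noumi-Yamada recursion with the $l'$-moves storing the differences needed as $\qHyp$-parameters for later $\rho$'s, and the boundary moves matching \eqref{eq:leftboundary} and Lemma \ref{l:upperboundary}. The only cosmetic difference is that you justify independence of the growth sequence by a separate commutation argument in the style of \cite{hopkins14}, whereas the paper gets it for free because its induction runs over arbitrary growth sequences and always lands on the same $q$RSK output.
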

Here is an illustration, where we show the shape $\Lambda$, and $t_{i'j'}$ which corresponds to $\lambda^i_k(j)$.
\begin{center}
  \includegraphics{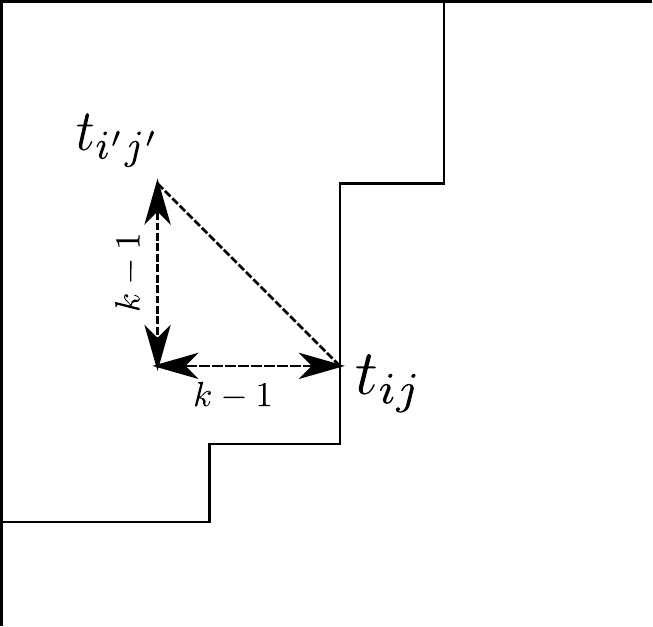}
\end{center}

\begin{proof}
  We prove it by induction.

  When $\Lambda = (1)$, that is, it is a one-by-one matrix, applying the local move $T_\Lambda = \rho_{1, 1} = l_{1, 1}$ on $A$ we obtain the correct result $\lambda^1_1 (1) = w_{1, 1}$.

  Let $\Theta \nearrow \Xi$ be two Young diagrams such that $\Xi / \Theta = (n, k)$.
  We assume the theorem is true for $\Lambda = \Theta$ and want to show it holds for $\Lambda = \Xi$.

  For all $(i, j)$ with $i - j \neq n - k$, on the one hand, the $\Theta$-coordinate and the $\Xi$-coordinate of $(i, j)$ coincide, and on the other hand,
  \begin{align*}
    T_\Xi A(i, j) = \rho_{n, k} T_\Theta A(i, j) = T_\Theta A (i, j) 
  \end{align*}
  as $\rho_{n, k}$, by its definition, only alters the entries along the diagonal $i - j = n - k$.
  Therefore it suffices to show
  \begin{align*}
    (\rho_{n, k} T_\Theta A) (n - \ell + 1, k - \ell + 1) = \lambda^k_\ell(n), \qquad \ell = 1 : n \wedge k.
  \end{align*}

  Once again we use an induction argument.
  Denote for $\ell = 0 : n \wedge k$
  \begin{align*}
    t^\ell = l_{n - \ell + 1, k - \ell + 1} \circ l_{n - \ell, k - \ell} \circ \dots \circ l_{n - 1, k - 1} \circ l_{n, k} T_\Theta A.
  \end{align*}
  Then $t^0 = T_\Theta A$ and $t^{n \wedge k} = T_\Xi A$.

  It suffices to show that for $\ell = 1 : n \wedge k - 1$
  \begin{align*}
    t^\ell_{n - i + 1, k - i + 1} =
    \begin{cases}
      \lambda^k_i(n) & 1 \le i \le \ell \\
      a^k_{\ell + 1} (n) & i = \ell + 1 \\
      \lambda^{k - 1}_{i - 1} (n - 1) & \ell + 2 \le i \le n \wedge k
    \end{cases},
  \end{align*}
  and for $\ell = n \wedge k$, $t^\ell_{n - i + 1, k - i + 1} = \lambda^k_i(n)\forall i = 1 : n \wedge k$.

  We consider the bulk case, that is when $n, k > 1$, as the boundary cases are similar and much easier.

  For $\ell = 1$, when $l_{n, k}$ acts on $t^0$, by the Noumi-Yamada description it alters the submatrix (note that $w_{n, k} = a^k_1(n)$)
  \begin{align*}
    \begin{pmatrix}
      t^0_{n - 1, k - 1} & & t^0_{n - 1, k} \\ t^0_{n, k - 1} & \underset{\infty}{\text{---}} & t^0_{n, k}
    \end{pmatrix}
    =
    \begin{pmatrix}
      \lambda^{k - 1}_1 (n - 1) & \lambda^{k}_1 (n - 1) \\ \lambda^{k - 1}_1 (n) & w_{n, k}
    \end{pmatrix}
  \end{align*}
  into
  \begin{align*}
    \begin{pmatrix}
      a^k_2(n) & \lambda^k_1 (n - 1) \\ \lambda^{k - 1}_1 (n) & \lambda^k_1(n)
    \end{pmatrix}
  \end{align*}

  For $1 < \ell < n \wedge k$, given the induction assumption, $l_{n - \ell + 1, k - \ell + 1}$ acts on $t^{\ell - 1}$ by changing
  \begin{align*}
    \vmat{t^{\ell - 1}_{n - \ell, k - \ell}}{t^{\ell - 1}_{n - \ell, k - \ell + 1}}{t^{\ell - 1}_{n - \ell + 1, k - \ell}}{t^{\ell - 1}_{n - \ell + 1, k - \ell + 1}}{e}
    = \vmat{\lambda^{k - 1}_\ell (n - 1)}{\lambda^k_\ell (n - 1)}{\lambda^{k - 1}_\ell (n)}{a^k_\ell(n)}{\lambda^{k - 1}_{\ell - 1} (n - 1) - \lambda^{k - 1}_\ell (n)}
  \end{align*}
into
  \begin{align*}
    \begin{pmatrix}
      a^k_{\ell + 1}(n) & \lambda^k_\ell (n - 1) \\ \lambda^{k - 1}_\ell(n) & \lambda^k_\ell (n)
    \end{pmatrix}
  \end{align*}
  and that $l'_{n - \ell, k - \ell + 1}$ transforms the submatrix
  \begin{align*}
    \begin{pmatrix}
      t^\ell_{n - \ell - 1, k - \ell} & t^{\ell}_{n - \ell - 1, k - \ell + 1} \\ t^\ell_{n - \ell, k - \ell} & t^\ell_{n - \ell, k - \ell + 1}
    \end{pmatrix}
    =
    \begin{pmatrix}
      \lambda^{k}_{\ell + 1} (n - 1) & \lambda^{k + 1}_{\ell + 1} (n - 1) \\ \lambda^{k}_{\ell + 1} (n) & \lambda^{k}_{\ell} (n - 1)
    \end{pmatrix}
  \end{align*}
  into
  \begin{align*}
    \vmat{\lambda^{k}_{\ell + 1}(n - 1)}{\lambda^{k + 1}_{\ell + 1} (n - 1)}{\lambda^{k}_{\ell + 1} (n)}{\lambda^{k}_{\ell} (n - 1)}{\lambda^{k}_{\ell}(n - 1) - \lambda^{k}_{\ell + 1} (n)}
  \end{align*}
  which stores the correct argument for a possible future operation $\rho_{n, k + 1}$.

  For $\ell = n \wedge k$, say $n > k$, then by the induction assumption and the definition of the local moves at the left boundary \eqref{eq:lmb2}, $l_{n - k + 1, 1}$ acts on $t^{k - 1}$ by changing
  \begin{align*}
      \begin{pmatrix}
        & & t^{k - 1}_{n - k, 1} \\ & \underset{-}{\text{---}} & t^{k - 1}_{n - k + 1, 1}
  \end{pmatrix}
  =
      \begin{pmatrix}
        & & \lambda^{k}_{k}(n - 1) \\ & \underset{-}{\text{---}} & a^k_k(n)
  \end{pmatrix}
  \end{align*}
  into
  \begin{align*}
  \begin{pmatrix}
    & \lambda^k_k(n - 1) \\ & \lambda^k_k(n)
  \end{pmatrix}.
  \end{align*}
  This is the boundary case \eqref{eq:leftboundary} of the Noumi-Yamada description.

  Similarly when $n = k$ and $n < k$, the upper boundary and upper-left boundary cases \eqref{eq:lmb1}\eqref{eq:lmb3} correspond to Lemma \ref{l:upperboundary}.
\end{proof}

We also note a $q$-analogue of the map $b_{i,j}$ in \cite[(3.5)]{oconnell-seppalainen-zygouras14} (or the octahedron recurrence as in \cite{hopkins14}). 
Applying $\rho_{n, k}$ to a tri-diagonal strip $(i, j)_{n - k - 1 \le i - j \le n - k + 1}$, in the bulk, that is when $i - j = n - k, i, j > 1, i < n$ we have
\begin{align}
  t_{i, j} &:= t_{i - 1, j} + t_{i, j - 1} - t_{i - 1, j - 1} + \qHyp(t_{i + 1, j} - t_{i, j}, t_{i + 1, j + 1} - t_{i + 1, j}, t_{i, j + 1} - t_{i, j}) \notag \\
  &\qquad\qquad- \qHyp(t_{i, j - 1} - t_{i - 1, j - 1}, t_{i, j} - t_{i, j - 1}, t_{i - 1, j} - t_{i - 1, j - 1}), \qquad i < n - 1 \label{eq:t1}\\
  t_{i, j} &:= t_{i - 1, j} + t_{i, j - 1} - t_{i - 1, j - 1} + \qHyp(t_{i + 1, j} - t_{i, j}, \infty, t_{i, j + 1} - t_{i, j}) \notag\\
  &\qquad\qquad- \qHyp(t_{i, j - 1} - t_{i - 1, j - 1}, t_{i, j} - t_{i, j - 1}, t_{i - 1, j} - t_{i - 1, j - 1}), \qquad i = n - 1 \label{eq:t2}
\end{align}
where all the $q$-hypergeometric random variables with distinct parameters are independent.

\subsection{The push-forward measure of the $q$-local moves}
In this section we prove Theorem \ref{t:lmpush}.
Before starting the proof, we show some illustrations to help with the readability.

Here is an illustration of the measure $\mu_{q, \Lambda}$ for $\Lambda = (5, 5, 3, 3, 1)$.
Some of the $t$-entries have been labelled.
We focus on the products of $q$-Pochhammers:
we use solid (resp. dashed) lines to indicate endpoints whose differences contribute to the $q$-Pochhammers in the denominator (resp. numerator).
For example, the special solid line on the top left corner connecting $0$ and $t_{11}$ corresponds to $(t_{11} - 0)^{-1}$ in the measure.
\begin{center}
  \includegraphics{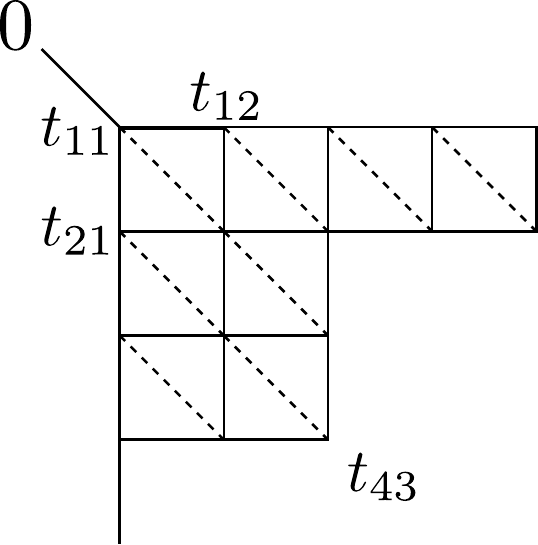}
\end{center}

The proof is about transformation by $\rho_{n, k}$ from measure $\mu_{\Theta, q}$ to $\mu_{\Lambda, q}$ where $\Lambda / \Theta = (n, k)$.
Without loss of generality assume $n > k$.
Intuitively speaking, after cancellations of $q$-Pochhammers that are not affected during this transformation, it suffices to show that $\rho_{n, k}$ has the following illustrated effect:
\begin{center}
  \includegraphics[scale=.7]{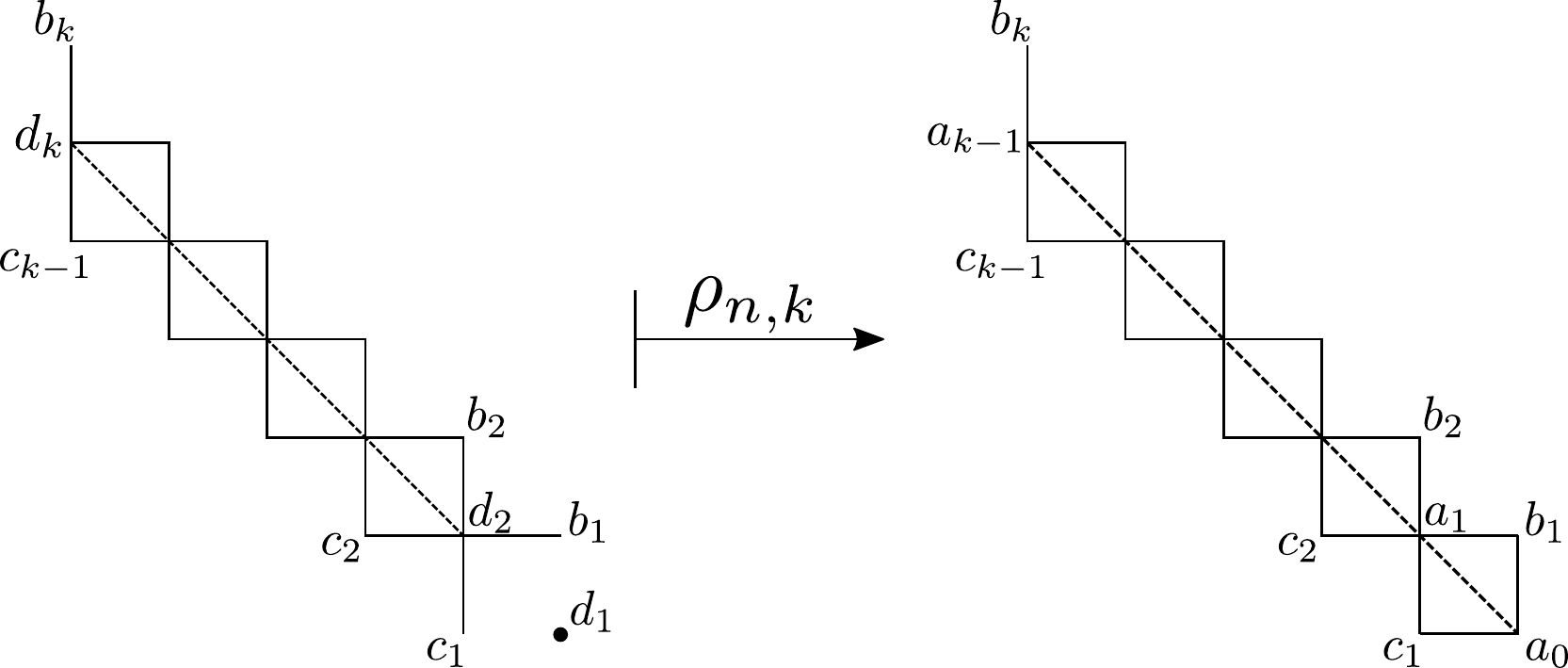}
\end{center}
Where the $a_i$'s, $b_i$'s, $c_i$'s and $d_i$'s are aliases of $t_{\ell, j}$'s on the tridiagonal area $\{(\ell, j): n - k - 1 \le \ell - j \le n - k + 1\}$ and the precise definition can be found in the proof.

Now let us turn to the complete proof. 
It may be compared to that of \cite[Theorem 3.2]{oconnell-seppalainen-zygouras14}.
\begin{proof}[Proof of Theorem \ref{t:lmpush}]
  We want to show
  \begin{align*}
    \sum_{(w_{i, j})_{(i, j) \in \Lambda}} \left(\prod_{(i, j) \in \Lambda} f_{\qGeom(\hat\alpha_i \alpha_j)} (w_{i, j})\right) \prob(T_\Lambda A = t) = \mu_{q, \Lambda} (t)
  \end{align*}

  First we can remove the matching product $\prod (\hat\alpha_i \alpha_j; q)_\infty$ from both sides.

  By Lemma \ref{l:wt} and Theorem \ref{t:qlocalmoves} we have that almost surely 
  \begin{align*}
    \sum_{i = 1 : \Lambda'_j} w_{i, j} = 
    \begin{cases}
      (T_\Lambda A)_{\Lambda'_1, 1} & j = 1 \\
      \sum_{k = 1 : j \wedge \Lambda'_j} (T_\Lambda A)_{\Lambda'_j - k + 1, j - k + 1} - \sum_{k = 1 : (j - 1) \wedge \Lambda_j} (T_\Lambda A)_{\Lambda_j - k + 1, j - k} & j > 1
    \end{cases}\\
    \sum_{j = 1 : \Lambda_i} w_{i, j} =
    \begin{cases}
      (T_\Lambda A)_{1, \Lambda_1}  & i = 1 \\
    \sum_{k = 1 : i \wedge \Lambda_i} (T_\Lambda A)_{i - k + 1, \Lambda_i - k + 1} - \sum_{k = 1 : (i - 1) \wedge \Lambda_i} (T_\Lambda A)_{i - k, \Lambda_i - k + 1} & i > 1
    \end{cases}
  \end{align*}
   Therefore the power of $\hat \alpha_i$ and $\alpha_j$ on both sides match, which we can also remove from the identity, leaving it sufficient to show
  \begin{align}
    \sum_{(w_{i, j})_{(i, j) \in \Lambda}} \left(\prod_{(i, j) \in \Lambda} (w_{i, j})_q^{-1}\right) \prob(T_\Lambda A = t) = M_\Lambda (t) \label{eq:m1}
  \end{align}
  where
  \begin{align*}
    M_\Lambda (t) = (t_{1 1})_q^{-1} {\prod_{(i, j) \in \Lambda: (i - 1, j - 1) \in \Lambda} (t_{i j} - t_{i - 1, j - 1})_q \over \prod_{(i, j) \in \Lambda: (i, j - 1) \in \Lambda} (t_{i j} - t_{i, j - 1})_q \prod_{(i, j) \in \Lambda: (i - 1, j) \in \Lambda} (t_{i j} - t_{i - 1, j})_q}.
  \end{align*}

  Once again we show this by an induction argument.
  When $\Lambda = (1)$ has just one coordinate, the left hand side of \eqref{eq:m1} becomes
  \begin{align*}
    \sum_{w_{11}} (w_{11})_q^{-1} \prob(l_{11} A = t) = (t_{11})_q^{-1} = M_\Lambda(t)
  \end{align*}
  is the right hand side of \eqref{eq:m1}.

Let $\Theta$ be a Young diagram such that $\Theta \nearrow \Lambda$.
Let $(n, k) = \Lambda / \Theta$.
Since $T_\Lambda = \rho_{n, k} \circ T_\theta$, we can rewrite the left hand side of \eqref{eq:m1} as
\begin{align*}
  \sum_{w_{n, k}} \sum_{(w_{i, j})_{(i, j) \in \Theta}} (w_{n, k})_q^{-1} \prod_{(i, j) \in \Theta} (w_{i, j})_q^{-1} \sum_{t': t'_{n, k} = w_{n, k}} \prob(T_\Theta A = t') \prob(\rho_{n, k} t' = t) \\
  = \sum_{t'} (t'_{n, k})_q^{-1} M_\Theta (t') \prob(\rho_{n, k} t' = t).
\end{align*}
where the last identity comes from the induction assumption.

So it suffices to show
\begin{align}
  \sum_{t'} (t'_{n, k})_q^{-1} {M_\Theta (t') \over M_\Lambda (t)} \prob(\rho_{n, k} t' = t) = 1. \label{eq:m4}
\end{align}

We assume $n > k$, as what follows can be adapted to the case $n < k$ due to the symmetry.
The proof when $n = k$ is similar with very minor changes.
For example, the right hand side of \eqref{eq:m4.5} will be the same except $(a_{k - 1} - b_k)_q$ and $(d_k - b_k)_q^{-1}$ are replaced by $(a_{k - 1})_q$ and $(d_k)_q^{-1}$ respectively due to the involvement of $(t'_{11})_q^{-1}$ and $(t_{11})_q^{-1}$.

We return to the proof where we assume $n > k$. 

When $k = 1$,
\begin{align*}
  (t'_{n, k})_q^{-1} {M_\Theta (t') \over M_\Lambda (t)} = (t'_{n, k})_q^{-1} (t_{n, k} - t_{n - 1, k})_q
\end{align*}
and due to the $q$-local moves on the boundary
\begin{align*}
  \prob(\rho_{n, k} t' = t) = \ind_{t_{n, k} = t'_{n, k} + t'_{n - 1, k}, t_{i, j} = t'_{i, j} \forall (i, j) \neq (n, k)}
\end{align*}
and we arrive at \eqref{eq:m4}.

When $k > 1$, since $\rho_{n, k}$ only changes the coordinates $B = \{(n, k), (n - 1, k - 1), \dots, (1, n - k + 1)\}$ of the matrix, the sum in \eqref{eq:m4} is over $(t'_{n - i + 1, k - i + 1})_{i = 1 : k}$.

By the structure of the products in $M_\Theta$ and $M_\Lambda$, we see that all the products outside of the diagonal strip near $(i - j) = n - k$ are cancelled out in $M_{k - 1} (t') / M_k (t)$. 
More precisely, when $k > 1$, by denoting
\begin{align*}
  a_i &= t_{n - i, k - i}, \qquad i = 0 : k - 1 \\
  b_i &= t_{n - i, k - i + 1} = t'_{n - i, k - i + 1}, \qquad i = 1 : k \\
  c_i &= t_{n - i + 1, k - i} = t'_{n - i + 1, k - i}, \qquad i = 1 : k - 1 \\
  d_i &= t'_{n - i + 1, k - i + 1}, \qquad i = 1 : k
\end{align*}
we have

\begin{align}
  (t'_{n, k})_q^{-1}{M_\Theta (t') \over M_\Lambda(t)} = (d_1)_q^{-1} (b_1 - d_2)_q^{-1} (c_1 - d_2)_q^{-1} (a_{k - 1} - b_k)_q (d_k - b_k)_q^{-1} {\prod_{i = 3 : k} h(d_{i + 1}, b_i, c_i, d_i) \over \prod_{i = 1 : k - 1} h(a_i, b_i, c_i, a_{i - 1})}
  \label{eq:m4.5}
\end{align}
where
\begin{align*}
  h(a', b', c', d') = (d' - a')_q (b' - a')_q^{-1} (c' - a')_q^{-1} (d' - c')_q^{-1} (d' - b')_q^{-1}
\end{align*}

It is time to calculate $\prob(\rho_{n, k} (t') = t)$.
This is the probability of mapping $(d_1, d_2, \dots, d_k)$ to $(a_0, a_1, \dots, a_{k - 1})$.

By the definition of the $q$-local moves (also see \eqref{eq:t1} and \eqref{eq:t2}) we have
\begin{align}
  a_0 &= b_1 + c_1 - d_2 + d_1 - X_1 \label{eq:m5}\\
  a_i &= b_{i + 1} + c_{i + 1} - d_{i + 2} + X_i - X_{i + 1}, \qquad i = 1 : k - 2 \label{eq:m6}\\
  a_{k - 1} &= b_k + X_{k - 1} \label{eq:m7}
\end{align}
where
\begin{align*}
  X_1 &\sim \qHyp(c_1 - d_2, \infty, b_1 - d_2) \\
  X_i &\sim \qHyp(c_{i} - d_{i + 1}, d_{i} - c_{i} ,b_{i} - d_{i + 1}), \qquad i = 2 : k - 1 
\end{align*}

By denoting $d_1 = X_0$, we can pin down the $X_i$'s in terms of the other variables.
\begin{align*}
  X_i &= \sum_{j = i : k - 1} a_j - \sum_{j = i + 1 : k} b_j - \sum_{j = i + 1 : k - 1} c_j + \sum_{j = i + 2 : k} d_j, \qquad i = 0 : k - 1
\end{align*}

Therefore
\begin{align*}
  \prob(&\rho_{n, k} (t') = t) \\
  &= f_{\qHyp(c_1 - d_2, \infty, b_1 - d_2)} (X_1) \prod_{i = 2 : k - 1} f_{\qHyp(c_{i} - d_{i + 1}, d_{i} - c_{i} ,b_{i} - d_{i + 1})} (X_i)
\end{align*}

Since we have the pmf's 
\begin{align*}
  &f_{\qHyp(c_1 - d_2, \infty, b_1 - d_2)} (X_1) = q^{(c_1 - d_2 - X_1) (b_1 - d_2 - X_1)} {\boxed{(c_1 - d_2)_q (b_1 - d_2)_q} \over (X_1)_q (c_1 - d_2 - X_1)_q (b_1 - d_2 - X_1)_q}\\
  &f_{\qHyp(c_{i} - d_{i + 1}, d_{i} - c_{i} ,b_{i} - d_{i + 1})} (X_i) = q^{(c_{i} - d_{i + 1} - X_i) (b_{i} - d_{i + 1} - X_i)} \\ 
  &\;\;\;\;\;\;\;\;\;\;\;\;\;\; \times \boxed{(c_{i} - d_{i + 1})_q (d_{i} - c_{i})_q (b_{i} - d_{i + 1})_q (d_{i} - b_{i})_q (d_{i} - d_{i + 1})_q^{-1}} \\
 &\;\;\;\;\;\;\;\;\;\;\;\;\;\; \times (X_i)_q^{-1} (c_{i} - d_{i + 1} - X_i)_q^{-1} (b_{i} - d_{i + 1} - X_i)_q^{-1} (d_i + d_{i + 1} - b_i - c_i + X_i)_q^{-1}\\
&\qquad\qquad\qquad\qquad\qquad\qquad\qquad\qquad\qquad\qquad\qquad\qquad\qquad\qquad i = 2 : k - 1
\end{align*}

The framed terms cancel out the $(c_1 - d_2)_q^{-1} (b_1 - d_2)_q^{-1} \prod_i h(d_{i + 1}, b_i, c_i, d_i)$ term in \eqref{eq:m4.5}. 

By shifting the terms not involving $d_{2 : k - 1}$ or $X_{0 : k - 1}$ from the left hand side of \eqref{eq:m4} to the right hand side we are left with showing

\begin{equation}
\begin{aligned}
  \sum_{d_{2 : k - 1}} &\prod_{i = 1 : k - 1} q^{(c_i - d_{i + 1} - X_i) (b_i - d_{i + 1} - X_i)} \prod_{i = 0 : k - 1} (X_i)_q^{-1} \\
  &\times \prod_{i = 1 : k - 1} \left((c_i - d_{i + 1} - X_i)_q^{-1} (b_i - d_{i + 1} - X_i)_q^{-1}\right) \\
  &\times \prod_{i = 2 : k - 1} (d_i + d_{i + 1} - b_i - c_i + X_i)_q^{-1} \times (d_k - b_k)_q^{-1}\\
  &= (a_{k - 1} - b_k)_q^{-1} \prod_{i = 1 : k - 1} h(a_i, b_i, c_i, a_{i - 1})
\end{aligned}
  \label{eq:m8}
\end{equation}

By the relation between $X_i$ and $X_{i + 1}$ in \eqref{eq:m5}\eqref{eq:m6} as well as the explicit form of $X_{k - 1}$ in \eqref{eq:m7}, we can write
\begin{align*}
  (X_i)_q^{-1} &= (X_{i + 1} + a_i - b_{i + 1} - c_{i + 1} + d_{i + 2})_q^{-1} \qquad i = 0 : k - 2 \\
  (d_i + d_{i + 1} - b_i - c_i + X_i)_q^{-1} &= (X_{i - 1} + d_i - a_{i - 1})_q^{-1}, \qquad i = 2 : k - 1\\
  (d_k - b_k)_q^{-1} &= (X_{k - 1} + d_k - a_{k - 1})_q^{-1}
\end{align*}
plugging these back into the left hand side of \eqref{eq:m8}, it becomes

\begin{align*}
  (X_{k - 1})_q^{-1} \sum_{d_k} f_k(d_k) \sum_{d_{k - 1}} f_{k - 1}(d_{k - 1}) \sum_{d_{k - 2}} f_{k - 2}(d_{k - 2}) \dots \sum_{d_3} f_3(d_3) \sum_{d_2} f_2(d_2)
\end{align*}

where

\begin{align*}
  f_{i } (d_{i }) = &q^{(c_{i - 1} - d_{i } - X_{i - 1}) (b_{i - 1} - d_{i } - X_{i - 1})} (X_{i - 1} + a_{i - 2} - b_{i - 1} - c_{i - 1} + d_{i })_q^{-1} \\
  &\times (c_{i - 1} - d_{i} - X_{i - 1})_q^{-1} (b_{i - 1} - d_{i} - X_{i - 1})_q^{-1} (X_{i - 1} + d_{i } - a_{i - 1})_q^{-1}
\end{align*}

Note that $f_i (d_i)$ depends on $X_{i - 1}$, which in turn depends on $d_{i + 1 : k - 1}$.

However the sum remove the dependencies of all the $d$'s.
More precisely, starting from the innermost sum $\sum_{d_2} f_2 (d_2)$, 
by applying \eqref{eq:qhyp} with $(m_1, m_2, k, s) := (c_{1} - a_{1}, a_{0} - c_{1} , b_{1} - a_{1}, d_2 + X_{1} - a_{1})$ , we have
\begin{align*}
  \sum_{d_2} f_2 (d_2) = h(a_1, b_1, c_1, a_0)
\end{align*}
which has no dependency on the $d$'s.

So we can recursively apply \eqref{eq:qhyp} with $(m_1, m_2, k, s) := (c_{i - 1} - a_{i - 1}, a_{i - 2} - c_{i - 1} , b_{i - 1} - a_{i - 1}, d_i + X_{i - 1} - a_{i - 1})$ , to obtain
\begin{align*}
  \sum_{d_i} f_i (d_i) = h(a_{i - 1}, b_{i - 1}, c_{i - 1}, a_{i - 2}), \qquad i = 2 : k
\end{align*}

This leaves us with only $(a_{k - 1} - b_k)_q^{-1}$ on the right hand side of \eqref{eq:m8}. 
Since on the left hand side $X_{k - 1} = a_{k - 1} - b_k$ we are done.
\end{proof}

\subsection{Joint distribution of $q$-polymer and polynuclear growth models}
By applying Theorem \ref{t:qlocalmoves} and Theorem \ref{t:lmpush} we obtain the joint distribution of $q$-polymer, Corollary \ref{c:qpdist}.

When $\Lambda = (p, p - 1, p - 2, \dots, 1)$ is a staircase Young diagram, the $q$-local moves defines a $q$-version of the multilayer polynuclear growth model.

Recall in \cite{johansson03}, the PNG model was concerned with the height function $h^j_m(k)$ at time $m$, position $k$ and level $j$, where all of time, space and level are discrete.
The level starts from $0$ onwards, where we call $h^0_m(k)$ the top level height function and abbreviate it as $h_m(k)$.
The height function is $0$ outside of a cone: $h^j_m(k) = 0$ for $|k| \ge m - 2 j$.
The initial condition is $h^j_0(0) = 0$ for all $j$, and the height functions grow as they are fed by the droplets over the time.
We denote the droplet at time $m$ and position $k$ by $d_m(k)$, which is also zero outside of the cone $|k| < m$ or when $k$ and $m$ have same parity.
Later we will see that these are droplets for the top level, and there will be droplets for the non top levels as well.
Hence it is useful to denote $d^0_m(k) := d_m(k)$ and use $d^j_m(k)$ as notations for droplets at level $j$ in general.
The PNG model evolves as follows:
\begin{enumerate}
\item At time $1$, the droplet at position $0$ forms the height at the same location: $h_1(0) = d_1(0)$.
\item At time $2$, the height expands horizontally by $1$ to both directions ($h_2(-1) = h_2(0) = h_2(1) = h_1(0)$), and droplets at the new positions ($\pm1$) adds to the height function ($h_2(-1) := h_2(-1) + d_2(-1)$, $h_2(1) := h_2(1) + d_2(1)$). So the net effect is:
  \begin{align*}
    h_2(-1) = h_1(0) + d_2(-1), \qquad h_2(0) = h_1(0), \qquad h_2(1) = h_1(0) + d_2(1).
  \end{align*}
\item At time $3$, the peak heights (namely the ones at position $\pm1$) further expands horizontally by $1$ to both directions, and at positions $-2, -1, 1$ and $2$ the same event as at time $2$ happens:
  \begin{align*}
    h_3(-2) &= h_2(-1) + d_3(-2), \qquad h_3(2) = h_2(1) + d_3(2),\\
    h_3(-1) &= h_2(-1), \qquad h_3(1) = h_2(1).
  \end{align*}
  However, at position $0$, the expansions from positions $-1$ and $1$ collide, in which case the maximum of the colliding heights remains on the top level, whose sum with the droplet $d_3(0)$ forms the new height, and the minimum becomes the droplet for the first level and forms the height at the first level:
  \begin{align*}
    h_3(0) = (h_2(-1) \vee h_2(1)) + d_3(0), \qquad h^1_3(0) = d^1_3(0) = h_2(-1) \wedge h_2(1).
  \end{align*}
\item At any time, starting from the top level, the height function at each level expands both ways. And at any collision, the sum of the maximum of the colliding heights and the droplet becomes the new height, and the minimum becomes the droplet for the next level at the same position:
  \begin{equation}
  \begin{aligned}
    h^j_m(k) &= (h^j_{m - 1} (k - 1) \vee h^j_{m - 1} (k + 1)) + d^j_m(k)\\
    d^{j + 1}_m(k) &= h^j_m(k - 1) \wedge h^j_m(k + 1). 
  \end{aligned}
    \label{eq:png}
  \end{equation}
\end{enumerate}

Clearly given that all droplets are sampled independently, the height functions are a Markov process because their values at time $n$ only depend on their values at time $n - 1$ and the droplets at time $n$.

It is known that the PNG model observes the same dynamics as the RSK algorithm acting on a staircase tableau.
More specifically, one may identify the top-level droplets for PNG with the input data for RSK:
\begin{align}
  d_m(k) = w_{\lfloor {m - k \over 2} \rfloor, \lceil{m + k \over 2}\rceil } \label{eq:rskpng1}
\end{align}
Let $q = 0$, then by identifying
\begin{align}
  h^j_m(k) &= t_{\lfloor {m - k \over 2} \rfloor - j, \lceil{m + k \over 2}\rceil - j} \label{eq:rskpng2}
\end{align}
where the $t_{n\ell}$'s are the output of local moves taking the staircase tableau $(w_{ij})_{i + j \le m + 1}$, one may recover the dynamics of the PNG model \eqref{eq:png} from the dynamics of the local moves.

Using the same correspondence, the gPNG model was defined using the gRSK dynamics, as per \cite{nguyen-zygouras16}.

Similarly we may define a $q$-version of the PNG model using the same correspondence \eqref{eq:rskpng1}\eqref{eq:rskpng2} for $0 < q < 1$.
With the same reasoning, one can say that the $q$PNG height functions are a Markov process.
The dynamics is a bit more hairy than the usual PNG but a simple rewriting of the $q$RSK algorithm. 
Here we show the dynamics of the top level height function:
\begin{align*}
  h_m(k) &= h_{m - 1} (k - 1) + h_{m - 1} (k + 1) - h_{m - 1} (k) \\
  &\qquad - \qHyp(h_{m - 1}(k - 1) - h_{m - 1}(k), \infty, h_{m - 1}(k + 1) - h_{m - 1}(k)) + d_m(k).
\end{align*}
It can be seen from this formula that, similar the usual PNG model, the height function $h_m(k)$ is a function of the heights at neighbouring positions at the previous time $h_m(k - 1), h_m(k), h_m(k + 1)$ and the droplet $d_m(k)$.

In \cite{johansson03} the PNG model was used to show that asymptotically the partition functions of DLPP at the same time are the Airy process.

Here by applying the $q$-local moves on the staircase Young diagram and use Theorem \ref{t:lmpush} and Theorem \ref{t:qlocalmoves}, we obtain a $q$-version and the joint distribution of partition functions of the $q$-polymer at a fixed time in Corollary \ref{c:qpng} in Section \ref{s:intro}.

Our result on joint distributions of polymers, Corollary \ref{c:qpdist} and \ref{c:qpng} are $q$-versions of Theorem 2.8 and 3.5 in \cite{nguyen-zygouras16} respectively. To obtain anything more, such as the $q$-version of the two-point Laplace transform in Theorem 2.12 in that paper or the central limit of one point partition function in Theorem 1 in \cite{borodin-corwin-remenik13}, a natural question arises whether one can obtain a $q$-Whittaker version of Corollary 1.8 (writing one-point Laplace transform as a Fredholm determinant) in \cite{borodin-corwin-remenik13}, which is the main tool to show the two results.

\subsection{Measures on the matrix}

In this section we restrict to $\Lambda = [n] \times [m]$.  As a straightfoward application of Theorem \ref{t:qlocalmoves} one can show the following result, from which Corollary \ref{c:qwmeasure} follows:
\begin{prp}
  The distribution of the marginal variable $\lambda = (t_{n, m}, t_{n - 1, m - 1}, \dots, t_{(n - m)^+ + 1, (m - n)^+ + 1})$ of $(t_{ij})_{(i, j) \in \Lambda}$ is the $q$-Whittaker measure
  \begin{align*}
    \sum_{t_{i j}: i - j \neq n - m, 1 \le i \le n, 1 \le j \le m} \mu_q(t) = \mu_{q\text{-Whittaker}} (\lambda) = P_\lambda(\alpha) Q_\lambda(\hat\alpha) \prod_{i, j} (\hat\alpha_i \alpha_j; q)_\infty
  \end{align*}
\end{prp}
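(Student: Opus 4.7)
The plan is to split $\mu_q(t)$ along the shape diagonal $i - j = n - m$ and evaluate the sum over the two halves separately by matching the definitions of $P_\lambda$ and $Q_\lambda$. Assume $n \ge m$; the other case is symmetric under the interchange $(P, \alpha) \leftrightarrow (Q, \hat\alpha)$. By Theorem \ref{t:qlocalmoves}, $\lambda$ is the common shape of the output $P$- and $Q$-tableaux of $q\RSK$ on $A(\Lambda)$; entries of $t$ with $i - j > n - m$ form the non-top levels of the $P$-GT pattern via $\lambda^k_j = t_{n-j+1, k-j+1}$, and entries with $i - j < n - m$ form the non-top levels of the $Q$-GT pattern via $\mu^k_j = t_{k-j+1, m-j+1}$. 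Hence summing $\mu_q(t)$ over the non-diagonal entries is the same as summing over the interiors of both GT patterns while holding $\lambda$ fixed.

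Next, factorize $\mu_q(t)$ along this split. The numerator Pochhammers $(t_{ij} - t_{i-1, j-1})_q$ couple consecutive entries on a single diagonal, so they sit inside one GT-pattern level and translate to $(\lambda^k_j - \lambda^k_{j+1})_q$ or $(\mu^k_j - \mu^k_{j+1})_q$. The denominator Pochhammers $(t_{ij} - t_{i, j-1})_q$ and $(t_{ij} - t_{i-1, j})_q$ couple entries on adjacent diagonals, producing the denominators of the $q$-binomials
\begin{align*}
  \binom{\lambda^k_j - \lambda^k_{j+1}}{\lambda^k_j - \lambda^{k-1}_j}_q = \frac{(\lambda^k_j - \lambda^k_{j+1})_q}{(\lambda^k_j - \lambda^{k-1}_j)_q (\lambda^{k-1}_j - \lambda^k_{j+1})_q}
\end{align*}
that appear in the definitions of $P_\lambda$ and $Q_\lambda$. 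By Lemma \ref{l:wt}, $r_j = \sum_i w_{ij}$ and $\hat r_i = \sum_j w_{ij}$ are column and row sums of the input, which by the definition of $\ty$ are exactly the types of the $P$- and $Q$-tableaux, so $\alpha^r \hat\alpha^{\hat r}$ splits as a product of a $P$-type factor in $\alpha$ and a $Q$-type factor in $\hat\alpha$. Allocating the boundary factor $(t_{11})_q^{-1}$ and the shape-diagonal Pochhammers to the $Q$-side, where they are absorbed by the prefactors $(\lambda_n)_q^{-1} \prod_{i=2}^n (\lambda_{i-1} - \lambda_i)_q^{-1}$ of $Q_\lambda$, the pushforward takes the form $\mu_q(t) = \prod_{i, j}(\hat\alpha_i \alpha_j; q)_\infty \cdot F_P((\lambda^k_j)) \cdot F_Q((\mu^k_j))$.

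Summing $F_P$ over the interior entries of $P$-GT patterns of shape $\lambda$ and $F_Q$ over those of $Q$-GT patterns of shape $\lambda$ (independently, by Fubini) then reproduces term-by-term the defining sums of $P_\lambda(\alpha)$ and $Q_\lambda(\hat\alpha)$ from Section \ref{s:qdef}, which establishes the claim.

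The main obstacle is the careful bookkeeping of the factors living on the shape diagonal and of the isolated boundary term $(t_{11})_q^{-1}$: the shape-diagonal Pochhammers $(t_{ij} - t_{i-1, j-1})_q$ with $i - j = n - m$ appear exactly once in $\mu_q$ but are naturally associated with both $P$-level $m$ and $Q$-level $n$, so one must decide their apportionment; and $(t_{11})_q^{-1}$ (which lies strictly above the shape diagonal when $n > m$) must line up with the $(\lambda_n)_q^{-1}$ prefactor in $Q_\lambda$ under the zero-padding convention $\lambda_{m+1} = \cdots = \lambda_n = 0$. If this allocation becomes cumbersome, one can short-circuit the direct computation: Theorem \ref{t:qlocalmoves} already identifies $\lambda$ with the common shape of $q\RSK$ applied to a $q$-geometric matrix, and this shape is known to follow the $q$-Whittaker measure by \cite[Section 6]{matveev-petrov15}.
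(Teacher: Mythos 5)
Your proposal is correct and is essentially the paper's own (unwritten) argument: the paper offers no proof beyond the remark that the Proposition is ``a straightforward application of Theorem \ref{t:qlocalmoves}'', and your factor-by-factor matching of $\mu_q$ with the defining sums of $P_\lambda$ and $Q_\lambda$ under the GT-pattern identification is exactly the fleshed-out version of that. One small correction to the bookkeeping you flag as delicate: the shape-diagonal Pochhammers $(t_{ij}-t_{i-1,j-1})_q$ with $i-j=n-m$ should be assigned to the $P$-side (they are the top-level binomial numerators of $P_\lambda(\alpha)$), while the prefactor $(\lambda_n)_q^{-1}\prod_{i=2:n}(\lambda_{i-1}-\lambda_i)_q^{-1}$ in $Q_\lambda$ cancels the duplicate copy of those numerators coming from $P_\lambda(\hat\alpha)$ inside $Q_\lambda$, and the isolated $(t_{11})_q^{-1}$ (for $n>m$) emerges from the telescoping of the degenerate binomials at levels $k>m$ of the $Q$-pattern combined with the $(\lambda_m)_q^{-1}$ prefactor, rather than directly from $(\lambda_n)_q^{-1}$.
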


Let $L$ be a measure on $\real^{n \times m}$ defined as follows
\begin{align*}
  L(x) &= \exp(- e^{- x_{1 1}}) \prod_{i = 1 : n} \prod_{j = 2 : m} \exp(- e^{x_{i, j - 1} - x_{i j}}) \prod_{i = 2 : n} \prod_{j = 1 : m} \exp(- e^{x_{i - 1, j} - x_{i, j}}) \\
  &\;\;\;\;\;\;\;\;\;\;\;\;\times \exp( - \sum  \theta_i s_i) \exp(- \sum \hat\theta_i \hat s_i ) \prod_{i = 1 : n} \prod_{j = 1 : m} \Gamma(\hat\theta_i + \theta_j)^{-1}
\end{align*}
where
\begin{align*}
  s_1 &= t_{n, 1} \\
  s_i &= \sum_{j = 1 : n \wedge i} t_{n - j + 1, i - j + 1} - \sum_{j = 1 : n \wedge (i - 1)} t_{n - j + 1, i - j}, \qquad i = 2 : m\\
  \hat s_1 &= t_{1, m} \\
  \hat s_i &= \sum_{j = 1 : m \wedge i} t_{i - j + 1, m - j + 1} - \sum_{j = 1 : m \wedge (i - 1)} t_{i - j, m - j + 1}, \qquad i = 2 : n.
\end{align*}
This measure was introduced in \cite{oconnell-seppalainen-zygouras14} as the push-forward measure of local moves acting on a matrix with log-Gamma weights.

The next proposition demonstrates the classical limit from $\mu_q$ to $L$.
\begin{prp}
  Let 
  \begin{align*}
    q &= e^{- \epsilon} \\
    t_{i j} &= (i + j - 1) m(\epsilon) + \epsilon^{- 1} x_{i j} \\
    \hat\alpha_i &= e^{- \epsilon \hat\theta_i} \\
    \alpha_j &= e^{- \epsilon \theta_j}
  \end{align*}
  Then
  \begin{align*}
    \lim_{\epsilon \downarrow 0} \epsilon^{- m n} \mu_q(t) = L(x).
  \end{align*}
\end{prp}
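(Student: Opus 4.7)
The plan is to substitute the scalings into every factor of $\mu_q(t)$ and apply Lemma \ref{l:qclassicallimits} piece by piece, then check that the $\exp(A(\epsilon))$ and $\log\epsilon$ contributions cancel precisely and the constants match $L(x)$. Concretely, I would group $\mu_q(t)$ into three parts: the ratio $M_\Lambda(t)$ of $q$-Pochhammers; the product $\prod_{i,j}(\hat\alpha_i\alpha_j;q)_\infty$; and the monomial $\alpha^r \hat\alpha^{\hat r}$.

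First, for $M_\Lambda(t)$ observe that under the scaling the three types of differences behave as $t_{11} = m(\epsilon) + \epsilon^{-1}x_{11}$, $t_{ij} - t_{i,j-1} = m(\epsilon) + \epsilon^{-1}(x_{ij}-x_{i,j-1})$, $t_{ij}-t_{i-1,j} = m(\epsilon) + \epsilon^{-1}(x_{ij}-x_{i-1,j})$ (all at $\alpha = 1$), and $t_{ij}-t_{i-1,j-1} = 2m(\epsilon) + \epsilon^{-1}(x_{ij}-x_{i-1,j-1})$ (at $\alpha=2$). Item 2 of Lemma \ref{l:qclassicallimits} then gives each $\alpha=1$ factor as $\exp(A(\epsilon) + e^{y} + O(\epsilon))$ with the appropriate $y$, and each $\alpha=2$ factor as $\exp(A(\epsilon) + O(\epsilon))$. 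Counting signed contributions, the total power of $A(\epsilon)$ from $M_\Lambda$ is $(n-1)(m-1) - n(m-1) - (n-1)m - 1 = -nm$, and the $O(1)$ constants assemble to exactly $\exp(-e^{-x_{11}})\prod_{i,j}\exp(-e^{x_{i,j-1}-x_{ij}})\prod_{i,j}\exp(-e^{x_{i-1,j}-x_{ij}})$, which is the first line of $L(x)$.

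Second, for the Cauchy factor, Item 1 of Lemma \ref{l:qclassicallimits} applied with $t = \hat\theta_i + \theta_j$ gives $(\hat\alpha_i\alpha_j;q)_\infty = \Gamma(\hat\theta_i+\theta_j)^{-1}\exp(A(\epsilon) + (1-\hat\theta_i-\theta_j)\log\epsilon + O(\epsilon))$. Taking the product over $[n]\times[m]$ contributes $+nm$ to the $A(\epsilon)$ exponent (cancelling the $-nm$ from $M_\Lambda$) together with a factor $\epsilon^{\,nm - m\sum_i\hat\theta_i - n\sum_j\theta_j}\prod_{i,j}\Gamma(\hat\theta_i+\theta_j)^{-1}$. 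Third, for $\alpha^r\hat\alpha^{\hat r}$, a short calculation with the scaling shows $r_j = n\,m(\epsilon) + \epsilon^{-1} s_j$ and $\hat r_i = m\,m(\epsilon) + \epsilon^{-1}\hat s_i$, so using $\epsilon\,m(\epsilon) = \log\epsilon^{-1}$ and $\hat\alpha_i=e^{-\epsilon\hat\theta_i}$, $\alpha_j = e^{-\epsilon\theta_j}$ we obtain $\alpha^r\hat\alpha^{\hat r} = \epsilon^{\,n\sum_j\theta_j + m\sum_i\hat\theta_i}\exp(-\sum_j\theta_j s_j - \sum_i\hat\theta_i\hat s_i)$. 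The two $\epsilon$-powers combine to $\epsilon^{\,nm}$, which is exactly killed by the prefactor $\epsilon^{-nm}$.

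Assembling everything yields the target $L(x)$. The routine parts are term-by-term asymptotics, which follow directly from Lemma \ref{l:qclassicallimits}. The main bookkeeping obstacle is the calculation $r_j = n\,m(\epsilon) + \epsilon^{-1}s_j$ and its analogue $\hat r_i = m\,m(\epsilon) + \epsilon^{-1}\hat s_i$, since both sums in the definition of $r_j$ telescope through many $m(\epsilon)$-terms; once this telescoping is verified, the precise cancellation of $A(\epsilon)$-powers against the Cauchy factor and of $\epsilon^{nm}$ against the stated prefactor is forced by the balance between the number of numerator and denominator Pochhammers and the number of cells in $\Lambda$. Indicator-function issues are handled by noting that $t \in D_\Lambda$ automatically holds for all sufficiently small $\epsilon$ since $m(\epsilon)\to\infty$ dominates the $\epsilon^{-1}x_{ij}$ corrections.
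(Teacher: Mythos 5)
Your proposal is correct and follows exactly the route the paper takes: the paper's own proof is simply ``plug in Items 1 and 2 of Lemma \ref{l:qclassicallimits}'', and your decomposition into $M_\Lambda(t)$, the Cauchy factor, and $\alpha^r\hat\alpha^{\hat r}$, with the $A(\epsilon)$-count $(n-1)(m-1)-n(m-1)-(n-1)m-1=-nm$ cancelling against the $+nm$ from the Cauchy product and the $\epsilon^{nm}$ absorbed by the prefactor, is precisely that computation carried out in detail. (Minor note: Item 2 gives $\exp(A(\epsilon)+e^{-y}+O(\epsilon))$, not $e^{+y}$, but your assembled constants are the correct ones, so this is only a notational slip.)
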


\begin{proof}
  Quite straightforward by plugging in Items 1 and 2 in Lemma \ref{l:qclassicallimits}.
\end{proof}

\Addresses
\end{document}